\newcommand{\Rect}{\mathbf{R}}
\newcommand{\Flat}{\mathbb{F}}
\newcommand{\Mass}{\mathbb{M}}
\newcommand{\N}{\mathbb{N}}
\newcommand{\Haus}{\mathcal{H}}
\newcommand{\Lip}{\mathrm{Lip}}
\newcommand{\CP}{\mathbb{C}\mathbb{P}(2)}
\newcommand{\dist}{\mathrm{dist}}
\newcommand{\supp}{\mathrm{spt}\hspace{0.01cm}}
\newcommand{\spt}{\mathrm{spt}\hspace{0.01cm}}
\newcommand{\dV}{d_V\kern-1pt}
\newcommand{\dW}{d_W\kern-1pt}
\newcommand\res{\mathop{\hbox{\vrule height 7pt width .3pt depth 0pt
\vrule height .3pt width 5pt depth 0pt}}\nolimits}
\newcommand{\BSOn}{\mathbf{B S O}(n)}
\newcommand{\univob}{\widetilde{\gamma}^n}
\newcommand{\EilenbergML}{\mathbf{K}(\mathbb{Z},n)}
\title{Optimal smooth approximation of integral cycles}
\author{Fredrick Almgren$^{\dagger}$}
\address{}
\email{}
\urladdr{}
\author{William Browder}
\address{}
\email{}
\urladdr{}
\author{Gianmarco Caldini}
\address{}
\email{}
\urladdr{}
\author{Camillo De Lellis}
\address{}
\email{}
\urladdr{}
\newcommand\blfootnote[1]{%
  \begingroup
  \renewcommand\thefootnote{}\footnote{#1}%
  \addtocounter{footnote}{-1}%
  \endgroup
}
\newtheorem{thm}{Theorem}[section]    
\newtheorem{corollary}[thm]{Corollary}
\newtheorem{lem}[thm]{Lemma}  
\newtheorem{pro}[thm]{Proposition}    
\theoremstyle{definition}
\newtheorem{definition}[thm]{Definition}    
\newtheorem{question}{Question}
\theoremstyle{definition}
\theoremstyle{remark}
\newtheorem{remark}[thm]{Remark}             
\newtheorem{example}[thm]{Example}             
\newtheorem{lemma}[thm]{Lemma}             
\theoremstyle{definition}
\newtheorem{assumptions}[thm]{Assumption}    
\begin{document}

\begin{abstract}  
In this article we prove that each integral cycle $T$ in an oriented Riemannian manifold $\mathcal{M}$ can be approximated in flat norm by an integral cycle in the same homology class which is a smooth submanifold $\Sigma$ of nearly the same area, up to a singular set of codimension 5. Moreover, if the homology class $\tau$ is representable by a smooth submanifold, then $\Sigma$ can be chosen free of singularities.
\blfootnote{$^{\dagger}$Deceased on February, 5th 1997.}
\end{abstract}

\maketitle
\vspace{-1cm}
\setlength{\cftbeforesecskip}{9pt}
\tableofcontents


\section{Introduction}

\subsection{Motivation and historical background}
Integral currents represent one of the most satisfactory analytic and topological formulations of the concept of \textit{generalized surfaces}, that is $m$-dimensional submanifolds in $(m+n)$-dimensional ambient manifolds having sufficient compactness properties to allow the application of the direct methods in the calculus of variations. Integral currents were introduced by Federer and Fleming in their celebrated article \cite{FedererFleming60} to provide a successful solution to the so-called \textit{oriented} Plateau problem: the problem of finding an oriented generalized surface of smallest area spanning a given boundary or representing a given homology class. One of the main results of \cite{FedererFleming60} is that each homology class with coefficients in $\mathbb{Z}$ can be represented by a cycle of least area. These existence theorems have been followed over the years by powerful regularity theories for minimizers, showing that solutions are \textit{a posteriori} much more regular than one might expect \textit{a priori}, see \cite{DeGiorgi61, Almgren66, Allard72, Almgren83, Almgren00, dlsQ, dls1, dlssns, dls2, dls3}.

The natural question of how much smoother integral currents are with respect to their original definition goes back to the late 1950s and to the origin of the theory of integral currents with the seminal article of Federer and Fleming, see \cite{Federer59, FedererFleming60}. In particular, in \cite[page 1, lines 30-31]{FedererFleming60}, the authors write: {\it``Integral currents are actually much smoother than one might expect from the preceding definition''}, introducing the well-known \emph{deformation theorem} of integral currents and the \emph{strong approximation theorem} by means of polyhedral chains with integer coefficients, see \cite[Theorems 5.5 and 8.22]{FedererFleming60}. The deformation theorem represents a cornerstone in the theory, showing that the space of integral currents is the closure with respect to the flat topology of the space of polyhedral chains with integer coefficients.

A basic question in the theory of integral currents is thus the following.

\begin{question}
{\it ``How closely can one approximate an integral current $T$ representing a given homology class $\tau$ by a smooth submanifold?''}
\end{question}

It may happen, in full generality, that integral currents are singular due to topological obstructions: in \cite{Thom54}, Thom provides an example of a homology class of dimension 7 in a manifold of dimension 14 which is not realizable by means of a submanifold, \textit{cfr.} Example \ref{ex:ThomLensSpaces}. Moreover, it turns out that for each dimension greater than 7 there exist (in some manifold of arbitrarily large dimension) nonrealizable integral homology classes, see  \cite[Théorème III.9]{Thom54}; therefore, any integral current representing such a class must have singularities. Nevertheless, these obstructions motivate the following very natural question. 

\begin{question}\label{q:Lavrentiev}
{\it ``Suppose that a given homology class $\tau$ is realizable by a smooth submanifold, is it always possible to approximate any integral current $T$ representing $\tau$ (and hence, \emph{a fortiori}, any $T$ which is area-minimizing) by smooth submanifolds?''}
\end{question}

The following theorem, which is the focus of this article, provides answers to both questions.

\begin{thm}[Optimal smooth approximation]\label{t:1}
Let $\mathcal{M}$ be a connected smooth closed oriented Riemannian manifold of dimension $m+n$. Let $\varepsilon > 0$, $\tau$ be a fixed nonzero element of the $m$-dimensional integral homology group $\mathbf{H}_{m}(\mathcal{M}, \mathbb{Z})$, and $T$ be an integral cycle representing $\tau$. Then, there is a smooth triangulation $\mathcal{K}$ of $\mathcal{M}$ and an oriented $m$-dimensional smooth submanifold $\Sigma$ of $\mathcal{M}\setminus \mathcal{K}^{m-5}$ (where $\mathcal{K}^{m-5}$ denotes $(m-5)$-skeleton of $\mathcal{K}$) with the following properties.
\begin{enumerate}
\item The $m$-dimensional volume of $\Sigma$ does not exceed the mass of $T$ by more than $\varepsilon$, that is $\mathcal{H}^{m}(\Sigma) \leq \mathbb{M}(T) + \varepsilon$.
\item The current $\llbracket \Sigma \rrbracket$ is an integral cycle homologous to $T$ and there is an integral $(m+1)$-dimensional current $S$ in $\mathcal{M}$ such that $\partial S=\llbracket \Sigma \rrbracket - T$ and $\mathbb{M}(S)< \varepsilon$.
\item If $\tau$ admits a smooth representative, then $\Sigma$ can be chosen to be a smooth submanifold of $\mathcal{M}$.
\end{enumerate}
\end{thm}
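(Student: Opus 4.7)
The plan is to combine the classical Federer-Fleming polyhedral approximation with a stratum-by-stratum local desingularization on a smooth triangulation, with the codimension-$5$ singular set arising from the first failure of Thom's integer realization theorem.

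\emph{Step 1 (Polyhedral approximation).} By the Federer-Fleming strong approximation theorem \cite[Theorems 5.5 and 8.22]{FedererFleming60}, fix a sufficiently fine smooth triangulation $\mathcal{K}_{0}$ of $\mathcal{M}$ and an integral polyhedral $m$-cycle $P$ carried by $\mathcal{K}_{0}^{m}$, homologous to $T$, with $\mathbb{M}(P) \leq \mathbb{M}(T) + \varepsilon/2$ and $\partial R = P-T$ for some integral $(m+1)$-current $R$ of mass smaller than $\varepsilon/2$. It then suffices to construct a smooth submanifold $\Sigma$, homologous to $P$ via a current of small mass, with $\mathcal{H}^{m}(\Sigma) \leq \mathbb{M}(P) + \varepsilon/2$.

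\emph{Step 2 (Local smoothing up to codimension four).} I would smooth $P$ inductively on the codimension $k=1,2,3,4$ of the cells of $\mathcal{K}_{0}$. In a thin tubular neighborhood $U_{\sigma} \cong \sigma \times D^{n+k}$ of an $(m-k)$-cell $\sigma$, the cycle $P$ has the product form $\sigma \times L_{\sigma}$, where $L_{\sigma}$ is an integer polyhedral $k$-chain in the normal disk $D^{n+k}$ whose boundary on $\partial D^{n+k}$ has already been smoothed at the previous inductive step. I would replace $L_{\sigma}$ by a smooth oriented $k$-submanifold $\Lambda_{\sigma} \subset D^{n+k}$ representing the same relative homology class, with area close to $\mathbb{M}(L_{\sigma})$. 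The existence of such a $\Lambda_{\sigma}$ for $k \leq 4$ is provided by Thom's realization theorem (which works up to $k \leq 6$); the quantitative area control would come from an inductive desingularization within $D^{n+k}$ over its own strata, plus a genericity/averaging argument to smooth corners. Collars then glue $\sigma \times \Lambda_{\sigma}$ to the smoothing already constructed on the faces of $\sigma$.

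\emph{Step 3 (Assembly and the globally smooth case).} Gluing the open top-dimensional cells of $\mathcal{K}_{0}^{m}$ (where $P$ is already a smooth flat piece) with the smoothings near cells of codimension $1$ to $4$ produces an oriented smooth $m$-submanifold $\Sigma \subset \mathcal{M} \setminus \mathcal{K}_{0}^{m-5}$. Summing the local area excesses, controlled by the mesh of $\mathcal{K}_{0}$, yields $\mathcal{H}^{m}(\Sigma) \leq \mathbb{M}(T) + \varepsilon$ and an $(m+1)$-current $S'$ of mass $< \varepsilon/2$ with $\partial S' = \llbracket \Sigma \rrbracket - P$, so that $S = S' + R$ settles (1) and (2). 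For (3), if $\tau$ admits a smooth representative, the cohomological obstruction to continuing the smoothing across the $(m-5)$-skeleton vanishes: the Poincar\'e-dual map $\mathcal{M} \to K(\mathbb{Z}, n)$ lifts globally through the Thom space $MSO(n)$, whose homotopy fiber with $K(\mathbb{Z}, n)$ is $(n+3)$-connected with $\pi_{n+4} = \mathbb{Z}$, and the first obstruction class sits in $H^{n+5}(\mathcal{M}; \mathbb{Z}) \cong H_{m-5}(\mathcal{M}; \mathbb{Z})$; its vanishing allows the desingularization to extend across the top codimensions as well.

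\emph{Main obstacle.} The hardest step is the quantitative part of Step 2: Thom's theorem gives only topological existence of the smooth replacements $\Lambda_{\sigma}$, and producing them with area close to $\mathbb{M}(L_{\sigma})$ requires a constructive desingularization inside each normal disk, carefully coordinated across tubular neighborhoods so that excess mass sums to at most $\varepsilon$. This interplay between realizability of integer link classes (topology) and mass control under flat approximation (geometric measure theory) is the core of the argument and is precisely what prevents a direct appeal to a purely homotopy-theoretic representability of $\tau$.
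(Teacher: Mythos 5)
Your outline shares the same topological inputs as the paper (Federer--Fleming polyhedral approximation, Thom's realization/Thom spaces, the appearance of codimension $5$), but the core of your argument---the local, cell-by-cell desingularization in Step~2---is genuinely different from what the paper does, and it has gaps that you acknowledge but do not close. The paper does \emph{not} resolve the polyhedral cycle $P$ stratum by stratum. Instead it (i) refines $P$ to a cycle $P'$ that is a smooth submanifold $\Gamma$ away from a tube around $\mathcal{K}^{m-2}$ (Proposition~\ref{p:poly_approx_prescribedsing}), (ii) uses the relative Thom construction to produce a map $F:\Omega\to T(\widetilde{\gamma}^n)$ classifying $\Gamma$ over $\Omega$, (iii) uses the $(n+4)$-equivalence $T(\widetilde{\gamma}^n)\to\mathbf{K}(\mathbb{Z},n)$ together with the fact that $Q=\mathcal{M}\setminus U_\delta(\mathcal{K}^{m-5})$ has the homotopy type of an $(n+4)$-complex to lift the Poincar\'e dual to a global map $f:Q\to T(\widetilde{\gamma}^n)$, (iv) shows $F$ and $f|_\Omega$ are homotopic because $\Omega$ has the homotopy type of an $(n+1)$-complex, and (v) glues the resulting smooth preimages and applies the ``squash'' deformation of Proposition~\ref{p:squash} to get the mass estimate. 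This global homotopy-theoretic comparison is precisely what makes your Step~2 unnecessary, and it is what buys the coherent gluing and the quantitative mass control.

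The concrete gaps in your proposal are the following. First, the appeal to ``Thom's realization theorem (which works up to $k\leq 6$)'' for the existence of the local fillings $\Lambda_\sigma$ is not the right tool. Thom's theorem concerns absolute homology classes in closed manifolds; what you need is a \emph{relative} filling of a previously smoothed $(k-1)$-dimensional link $L'\subset S^{n+k-1}$ by a smooth embedded $k$-manifold in $D^{n+k}$, compatibly with the earlier strata. The relevant mechanism is the vanishing of the oriented cobordism groups $\widetilde{\Omega}_{k-1}$ for $k-1\leq 3$ (this is the Sullivan-type resolution the paper invokes in Section~\ref{s:optimality} for the \emph{optimality} argument, not for the construction), and the first non-removable obstruction appears at $\widetilde{\Omega}_4=\mathbb{Z}$, i.e.\ codimension $5$ links with the cobordism class of $\CP$. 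But even granting this, you must then establish that all the local choices of fillings can be made coherent across adjacent cells, and that the total area excess sums to at most $\varepsilon$; you explicitly flag this as ``the hardest step'' and leave it open. That is exactly the step the paper's combination of Proposition~\ref{p:poly_approx_prescribedsing}, the lifting argument, and Proposition~\ref{p:squash} is designed to replace. Second, your claim that the homotopy fiber of $T(\widetilde{\gamma}^n)\to\mathbf{K}(\mathbb{Z},n)$ has $\pi_{n+4}=\mathbb{Z}$ is incorrect: Lemma~\ref{l:m+4_equivalenza} shows the map is an $(n+4)$-equivalence, so the fiber is $(n+3)$-connected, but $\pi_{n+4}$ of the fiber is a torsion group (the first obstruction $St^5_3$ of Remark~\ref{r:St_pi} is $3$-torsion, as the Thom example $\CP$ illustrates), not $\mathbb{Z}$. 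Finally, for part~(3) you propose to check the vanishing of this obstruction class, but the paper's route is both simpler and avoids this: when $\tau$ is representable, Theorem~\ref{t:Thomclosed} hands you a map $g:\mathcal{M}\to T(\widetilde{\gamma}^n)$ directly, and you substitute it for the lift $f$ in the construction.
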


\begin{remark}
The codimension 5 construction in Theorem \ref{t:1} is optimal, as shown by the \emph{innately singular} homology class discovered by Thom, see Theorem \ref{t:Thom_innatelysingular}.
\end{remark}

In 1988 Almgren posed these basic questions formally, announcing Theorem \ref{t:1} together with the second named author few years later, see \cite[page 20, line 9]{AlmgrenBrowder88-91} and \cite[page 44, line 21]{Almgren90-93}; nevertheless, the program was never completed and a proof of the announced result never appeared. In \cite{AlmgrenBrowder88-91}, among other things, the authors hint at the strategy of using Thom's criterion in the context of homotopy classes of mappings from $\mathcal{M}$ (less a skeleton) to the Thom complex $T(\widetilde{\gamma}^n)$; building upon these fruitful unpublished ideas this article provides a complete proof of Theorem \ref{t:1}.

\subsection{Consequences of the main theorem}

We outline here several consequences of Theorem \ref{t:1}. Our underlying assumption for all the theorems of this paper is the following\footnote{We refer to Section \ref{s:legenda} for a list of notations.}.

\begin{assumptions}\label{a:1}
$n,m \in \N\setminus \{0\}$ are arbitrary positive integers, $\mathcal{M}$ is a connected smooth oriented closed Riemannian manifold of dimension $m+n$, $\tau$ is a nonzero element of the $m$-dimensional integral homology group $\mathbf{H}_{m}(\mathcal{M}, \mathbb{Z})$ and $T$ is an integral current (hence a cycle) representing $\tau$.
\end{assumptions}

 When dealing with smooth triangulations of $\mathcal{M}$ we will tacitly assume to have fixed some simplicial complex $\mathcal{K}$ together with a piecewise smooth map $t:|\mathcal{K}| \rightarrow \mathcal{M}$. In order to keep our notation simpler, with a slight abuse we will in fact mostly avoid referring to the map $t$ and we will use directly $\mathcal{K}$ also for the smooth triangulation of $\mathcal{M}$; thus, a simplex of the triangulation $\mathcal{K}$ will mean the $t$-image of a simplex of $|\mathcal{K}|$. $\mathcal{K}^j$ will denote the $j$-dimensional skeleton of $\mathcal{K}$, i.e. the union of all $j$-dimensional simplices of $\mathcal{K}$. Moreover, the letter $\Sigma$ will be reserved to denote either smooth oriented $m$-dimensional closed embedded submanifolds of $\mathcal{M}$ or smooth oriented $m$-dimensional embedded submanifolds of $\mathcal{M}\setminus \mathcal{K}^j$ (for some integer $j$) whose topological closure is contained in $\mathcal{K}^j$. 

By Nash’s isometric embedding theorem we consider $\mathcal{M}$ as a submanifold of some Euclidean space $\mathbb{R}^N$ and, by a classical theorem of Whitney, smooth submanifolds of $\mathbb{R}^N$ are smooth retractions of some open neighborhood, see \cite[3.1.19]{Federerbook}. Hence smooth compact submanifolds are Lipschitz neighborhood retracts. For every $k=0,\dots,m+n$, we denote by $\mathcal{Z}_k(\mathcal{M})$ the set of $k$-dimensional integral cycles with support in $\mathcal{M}$ and by $\mathcal{Z}_{k, Lip}(\mathcal{M})$ the set of $k$-dimensional integer Lipschitz cycles with support in $\mathcal{M}$, that is the set of cycles of the form $f_{\#}(P)$ where $f: \mathbb{R}^N \rightarrow \mathcal{M}$ is a Lipschitz map and $P$ is an integer polyhedral cycle in $\mathbb{R}^N$. 

We refer to Section \ref{s:notationandpreliminaries} for the relevant definitions. 

\begin{definition}[\emph{Smooth representability}]\label{d:smoothrepresentability}
Let $\mathcal{M}, \tau$ and $\Sigma$ be as in Assumption \ref{a:1}. We say that $\tau$ is \emph{representable by a smooth submanifold} (or that $\tau$ \emph{admits a smooth representative}) if there exists a smooth embedding $f:\Sigma \rightarrow \mathcal{M}$ such that the fundamental class of $\Sigma$ equals $\tau$, that is $f_*[\Sigma]=\tau$.
\end{definition}

The first consequence of Theorem \ref{t:1} is the absence of the so-called \emph{Lavrentiev gap phenomenon} for the homological Plateau problem\footnote{The Lavrentiev gap phenomenon holds for a functional in the calculus of variations when it has different infima depending on whether the infimum is taken over the whole class of admissible objects or over some smaller class of \emph{more regular} objects. The first of such examples was discovered by Lavrentiev in 1927, see \cite{Lavrentiev27} and
\textit{cfr.} \cite{ButtazzoBelloni95}.}. 

\begin{thm}[Absence of Lavrentiev gaps]\label{t:Lavrentiev}
Let $\mathcal{M}, \tau, T$ and $\Sigma$ as in Assumption \ref{a:1}, and define the following quantities:
\begin{align*}
&\mathbb{M}_{T}:=\min\{\mathbb{M}(T) : T \in \mathcal{Z}_k(\mathcal{M}) \cap \tau\},\\
&\mathbb{M}_{P}:=\inf\{\mathbb{M}(P) : P \in \mathcal{Z}_{k, Lip}(\mathcal{M}) \cap \tau\},\\
&\mathbb{M}_{\Sigma}:=\inf\{\text{Vol}\,^m(\Sigma) : \llbracket\Sigma\rrbracket \in \tau \textrm{ and $\Sigma$ is smooth in $\mathcal{M} \setminus \mathcal{K}^{m-5}$ for some triangulation $\mathcal{K}$}\},\\
&\mathbb{M}_{{\rm Reg}} := \inf\{\text{Vol}\,^m(\Sigma) : \llbracket\Sigma\rrbracket \in \tau \textrm{ and $\Sigma$ is smooth in $\mathcal{M}$}\}\, .
\end{align*}
Then, $\mathbb{M}_{T}= \mathbb{M}_{P}= \mathbb{M}_{\Sigma}$ and, moreover, $\mathbb{M}_T = \mathbb{M}_{{\rm Reg}}$ when $\tau$ is representable by a smooth submanifold.
\end{thm}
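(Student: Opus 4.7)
All four quantities trivially dominate $\mathbb{M}_T$: the inclusion $\mathcal{Z}_{m, Lip}(\mathcal{M}) \subset \mathcal{Z}_m(\mathcal{M})$ gives $\mathbb{M}_T \leq \mathbb{M}_P$, and for any admissible $\Sigma$ the integral cycle $\llbracket\Sigma\rrbracket$ has mass $\mathcal{H}^m(\Sigma)$ and lies in $\tau$, yielding $\mathbb{M}_T \leq \mathbb{M}_\Sigma$ and $\mathbb{M}_T\leq \mathbb{M}_{\mathrm{Reg}}$. The minimum defining $\mathbb{M}_T$ is attained by the Federer--Fleming compactness theorem and the lower semicontinuity of mass. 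The reverse inequalities $\mathbb{M}_\Sigma \leq \mathbb{M}_T$ and, under the extra hypothesis, $\mathbb{M}_{\mathrm{Reg}}\leq \mathbb{M}_T$ are immediate from Theorem \ref{t:1} applied to a mass-minimizer $T_0\in\tau$: for every $\varepsilon > 0$ the resulting $\Sigma$ is homologous to $T_0$ with $\mathcal{H}^m(\Sigma) \leq \mathbb{M}_T + \varepsilon$, and is smooth in all of $\mathcal{M}$ when $\tau$ is smoothly representable by conclusion (3). Letting $\varepsilon \to 0$ yields the equalities $\mathbb{M}_T = \mathbb{M}_\Sigma = \mathbb{M}_{\mathrm{Reg}}$ (the latter under the additional hypothesis).

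For the remaining equality $\mathbb{M}_P = \mathbb{M}_T$ the plan is to realize the $\Sigma$ furnished by Theorem \ref{t:1} as a Lipschitz integer cycle of almost the same mass. For almost every small $\delta > 0$, chosen via the coarea formula applied to $d := \dist(\cdot,\mathcal{K}^{m-5})$, the truncation $\Sigma_\delta := \Sigma \cap \{d > \delta\}$ is a smooth compact submanifold with smooth boundary, and $\mathcal{H}^m(\Sigma_\delta)\to \mathcal{H}^m(\Sigma)$ as $\delta\to 0^+$ since $\Sigma\cap \mathcal{K}^{m-5} = \emptyset$ and $\Sigma$ has finite area. By the Whitehead--Munkres smooth triangulation theorem, pushing $\Sigma_\delta$ forward under the smooth (hence Lipschitz) inclusion gives a Lipschitz chain $Q_\delta \in \mathcal{Z}_{m, Lip}(\mathcal{M})$ of mass $\mathcal{H}^m(\Sigma_\delta)$, whose boundary $\partial Q_\delta$ is supported in the codimension-$5$ tube $\{d \leq \delta\}$.

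\emph{Main obstacle.} The delicate step is capping off $\partial Q_\delta$ by a Lipschitz $m$-chain of vanishing mass. The key geometric input is that the tube $\{d \leq \delta\}$ deformation retracts onto the $(m-5)$-skeleton, so its $(m-1)$-homology vanishes; a cone construction along the retraction produces a Lipschitz $m$-chain $R_\delta$ inside the tube with $\partial R_\delta = \partial Q_\delta$ and $\mathbb{M}(R_\delta) \leq C\delta\cdot \mathbb{M}(\partial Q_\delta)$. A standard slicing argument lets one select $\delta$ in some interval $(\delta', 2\delta')$ so that $\mathbb{M}(\partial Q_\delta) \leq \mathcal{H}^m(\Sigma\cap\{d \leq 2\delta'\})/\delta'$, whence $\mathbb{M}(R_\delta) \leq 2C\,\mathcal{H}^m(\Sigma\cap\{d \leq 2\delta'\})\to 0$ as $\delta' \to 0$. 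Then $Q_\delta + R_\delta \in \mathcal{Z}_{m, Lip}(\mathcal{M})\cap \tau$ has mass at most $\mathbb{M}_T + 2\varepsilon$, yielding $\mathbb{M}_P \leq \mathbb{M}_T$. A shorter alternative that bypasses the filling is to apply the Federer--Fleming strong approximation theorem directly to $T_0$ viewed in $\mathbb{R}^N$ via Nash's embedding, producing $T_0 = \phi_\# P$ with $\phi$ Lipschitz and $\mathbb{M}(P) \leq \mathbb{M}(T_0) + \varepsilon$, and then composing with the smooth tubular retraction $\pi:U\to \mathcal{M}$ to realize $T_0$ itself as an element of $\mathcal{Z}_{m, Lip}(\mathcal{M})$; the Lipschitz constant of $\pi$ on a small enough $U$ is arbitrarily close to $1$, so the mass blow-up is controlled.
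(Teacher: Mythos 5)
The paper does not write out a proof of Theorem~\ref{t:Lavrentiev}: it is presented in Section~1.2 as a consequence of Theorem~\ref{t:1}, with the approximation Proposition~\ref{p:poly_approx_prescribedsing} supplying the missing pieces in the background. Your argument for $\mathbb{M}_T\leq\mathbb{M}_P,\ \mathbb{M}_T\leq\mathbb{M}_\Sigma,\ \mathbb{M}_T\leq\mathbb{M}_{\rm Reg}$, for attainment of $\mathbb{M}_T$, and for the reverse inequalities $\mathbb{M}_\Sigma\leq\mathbb{M}_T$ and (under smooth representability) $\mathbb{M}_{\rm Reg}\leq\mathbb{M}_T$ via Theorem~\ref{t:1} matches that intended route exactly.

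For $\mathbb{M}_P\leq\mathbb{M}_T$, the detour through Theorem~\ref{t:1} is unnecessary, and your ``shorter alternative'' is much closer to what the paper's machinery already gives. Indeed the first half of Proposition~\ref{p:poly_approx_prescribedsing} (properties $(a_0)$--$(d_0)$) produces, for any $\varepsilon_c>0$, a cycle $P=\sum_{F}\beta_F\llbracket F\rrbracket$ supported on the $m$-skeleton of a smooth triangulation of $\mathcal M$, homologous to $T$, with $\mathbb{M}(P)\leq(1+\varepsilon_c)\mathbb{M}(T)$; since the triangulation map $\psi$ is piecewise smooth, hence Lipschitz, $P\in\mathcal{Z}_{m,Lip}(\mathcal M)$, and $\mathbb{M}_P\leq\mathbb{M}_T$ follows at once. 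Two points about your formulation. First, a small imprecision: the Federer--Fleming strong approximation does \emph{not} realize $T_0$ itself as $\phi_\#P$; it gives a polyhedral cycle whose retraction to $\mathcal M$ is merely \emph{homologous} to $T_0$ with nearly the same mass, which is all that is needed. Second, and more substantively, your primary truncate-and-fill route has a gap: you construct $Q_\delta$ and $R_\delta$ separately as Lipschitz images of polyhedral \emph{chains} (not cycles) via two different Lipschitz maps (the inclusion of $\Sigma_\delta$, and the retraction homotopy). The definition of $\mathcal{Z}_{m,Lip}(\mathcal M)$ requires a single $f_\#(P)$ with $P$ a polyhedral \emph{cycle}; the obvious disjoint-union trick used to show $\mathcal{Z}_{m,Lip}$ is a group does not apply here because $\llbracket K_1\rrbracket$ and $\llbracket K_2\rrbracket$ are not individually cycles, their boundaries cancel only after push-forward. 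One can repair this by gluing the two polyhedra along compatible triangulations of $\partial\Sigma_\delta$ and then capping $\partial\Sigma_\delta\times\{1\}$ with an auxiliary cone on which the map is extended Lipschitz-continuously (using that the retraction collapses it to the low-dimensional skeleton), or more simply by one further application of the strong approximation theorem to the integral cycle $Q_\delta+R_\delta$. But at that point the Federer--Fleming route has been invoked anyway, so the alternative is strictly preferable.
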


\begin{remark}\label{r:St_pi} Note that any class $\tau$ is representable by a smooth submanifold when $n \in \{1,2\}$ or when $m \in \{1,2,3,4,5,6\}$, see Lemma \ref{l:m+4_equivalenza} and Remark \ref{r:rappresentofinomleq4}. In these cases, Theorem \ref{t:1} implies that singularities of integral cycles can be \emph{resolved}, in the sense that they can be approximated by smooth submanifolds.

The request that $\tau$ is representable by a smooth submanifold can often be expressed in terms of the vanishing of some suitable \emph{obstruction}, which is represented by a cohomology operation; in particular, denoting $x$ the Poincar\'e dual of $\tau$ and by $p$ an odd prime, a necessary (and, in some particular dimensions, also sufficient) condition for an integral homology class $\tau$ to be representable by a smooth submanifold is that all $St_p^{2r(p-1)+1}x$ are null, see \cite[Théorème II.20]{Thom54}. We recall that, following Thom's notation for the Bockstein reduced $p^{th}$ powers reduction mod $p$, $St_p^{2r(p-1)+1}$ represent (up to a sign) the following cohomology operations: $$St_p^{2r(p-1)+1}=\beta^* \circ \mathcal{P}^r_p \circ \theta_p:\mathbf{H}^*(X,\mathbb{Z}) \rightarrow \mathbf{H}^{*+2r(p-1)+1}(X,\mathbb{Z}),$$ where $\mathcal{P}^r_p$ is the reduced Steenrod $p^{th}$ power, $\beta^*: \mathbf{H}^*(X,\mathbb{Z}_p) \rightarrow \mathbf{H}^{*+1}(X,\mathbb{Z})$ is the Bockstein associated to the short exact sequence $$0 \rightarrow \mathbb{Z} \rightarrow \mathbb{Z} 
 \rightarrow \mathbb{Z}_p \rightarrow 0$$ and $\theta_p$ the reduction mod $p$ such that $\beta_p=\theta_p \circ \beta^*,$ with $\beta_p: \mathbf{H}^*(X,\mathbb{Z}_p) \rightarrow \mathbf{H}^{*+1}(X,\mathbb{Z}_p)$ the Bockstein associated to $0 \rightarrow \mathbb{Z}_p \rightarrow \mathbb{Z}_{p^2} \rightarrow \mathbb{Z}_p \rightarrow 0.$ 
 
 This necessary condition is a consequence of the fact that the cohomology for odd primes of the oriented Grassmannians is concentrated in dimensions which are multiples of 4.



\end{remark}

As a simple corollary of Theorem \ref{t:1} we further deduce the following approximation theorem with cycles of prescribed singularities.

\begin{thm}[Approximation by cycles with prescribed singular sets]\label{t:prescribed}
Let $\mathcal{M}, \tau$ and $T$ be as in Assumption \ref{a:1}. Then there is a sequence of smooth triangulations $\mathcal{K}_j$ of $\mathcal{M}$ and a sequence of smooth embedded oriented $m$-dimensional submanifolds $(\Sigma_j)_j$ in $\mathcal{M}\setminus \mathcal{K}_j^{m-5}$ such that
\begin{itemize}
\item[(a)] $\llbracket \Sigma_j\rrbracket \rightarrow T$ in the sense of currents,
\item[(b)] $\lim_{j\rightarrow \infty}\mathcal{H}^{m}(\Sigma_j) = \mathbb{M}(T)$,
\item[(c)]$\partial \llbracket \Sigma_j\rrbracket =0$ and $\llbracket \Sigma_j\rrbracket$ is in the same homology class as $T$.
\end{itemize}
\end{thm}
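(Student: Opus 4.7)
The plan is to apply Theorem \ref{t:1} with $\varepsilon = 1/j$ and let $j\to\infty$. For each $j\in\N$ Theorem \ref{t:1} produces a smooth triangulation $\mathcal{K}_j$ of $\mathcal{M}$, a smooth oriented $m$-submanifold $\Sigma_j \subset \mathcal{M}\setminus \mathcal{K}_j^{m-5}$, and an integral $(m+1)$-current $S_j$ in $\mathcal{M}$ with $\partial S_j = \llbracket\Sigma_j\rrbracket - T$, together with the estimates
\begin{equation*}
\mathcal{H}^m(\Sigma_j) \le \mathbb{M}(T) + \tfrac{1}{j}, \qquad \mathbb{M}(S_j) < \tfrac{1}{j}.
\end{equation*}
Clause (c) is immediate, since item (2) of Theorem \ref{t:1} gives $\partial \llbracket\Sigma_j\rrbracket = 0$ and $[\llbracket\Sigma_j\rrbracket] = \tau$.

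For (a), the identity $\partial S_j = \llbracket\Sigma_j\rrbracket - T$ together with $\mathbb{M}(S_j) < 1/j$ shows that the flat distance satisfies $\mathbb{F}(\llbracket\Sigma_j\rrbracket - T) \le \mathbb{M}(S_j) < 1/j$, so $\llbracket\Sigma_j\rrbracket \to T$ in flat norm, and in particular in the sense of currents.

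For (b), since $\Sigma_j$ is a smooth oriented embedded submanifold (in the open set $\mathcal{M}\setminus \mathcal{K}_j^{m-5}$, which carries no extra mass because its closure is contained in the $(m-5)$-skeleton and thus has vanishing $\mathcal{H}^m$-measure), the associated integral current satisfies $\mathbb{M}(\llbracket\Sigma_j\rrbracket) = \mathcal{H}^m(\Sigma_j)$. Hence the upper bound above gives $\limsup_j \mathcal{H}^m(\Sigma_j) \le \mathbb{M}(T)$, while lower semicontinuity of the mass under flat convergence of integral currents yields
\begin{equation*}
\mathbb{M}(T) \le \liminf_{j\to\infty} \mathbb{M}(\llbracket\Sigma_j\rrbracket) = \liminf_{j\to\infty}\mathcal{H}^m(\Sigma_j),
\end{equation*}
so $\mathcal{H}^m(\Sigma_j)\to \mathbb{M}(T)$.

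There is essentially no hard step here: all the content has already been extracted from Theorem \ref{t:1}, and the corollary is just a diagonalisation on $\varepsilon = 1/j$ combined with the elementary observations that a mass-small filling implies flat-small difference and that $\mathbb{M}(\llbracket\Sigma_j\rrbracket) = \mathcal{H}^m(\Sigma_j)$ for smooth embedded oriented submanifolds whose (topological) singular set lies in a skeleton of codimension at least one.
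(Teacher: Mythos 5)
Your proof is correct and follows the same route the paper has in mind: Theorem \ref{t:1} is applied with $\varepsilon=1/j$, item (2) gives (c) directly, the small filling $S_j$ gives flat convergence hence (a), and (b) follows from the mass bound in item (1) together with lower semicontinuity of mass. The paper's comment after the theorem statement also invokes the Federer flatness criterion to note that $\partial\llbracket\Sigma_j\rrbracket=0$ is automatic because $\spt(\partial\llbracket\Sigma_j\rrbracket)\subset\mathcal{K}_j^{m-5}$ has $\mathcal{H}^{m-1}$-measure zero, but since this is already encoded in conclusion (2) of Theorem \ref{t:1}, your citation is equally valid.
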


In fact conclusion (c) is a simple consequence of the Federer flatness criterion: since $\partial \llbracket \Sigma_j \rrbracket$ is a flat current supported in a set which has $\mathcal{H}^{m-1}$-zero measure, it must be $0$, see \cite[4.1.20]{Federerbook}; therefore it also follows from the convergence to $T$ that $\llbracket \Sigma_j \rrbracket$ is in the same homology class as $T$ for every $j$ sufficiently large.




\subsection{Overview of the proof}

The main idea of our study is to combine Federer and Fleming's theory of integral currents with tools and techniques from cobordism and homotopy theory.

The proof of Theorem \ref{t:1} can be \textit{grosso modo} descrived as follows. Starting from an $m$-dimensional integral cycle $T$ in a nontrivial homology class $\tau$ of $\mathcal{M}$, we first develop a delicate approximation theorem by means of a cycle $P'$ which is a smooth submanifold outside of a (small) $\delta$-neighborhood $B_\delta$ of the $(m-2)$-skeleton of some triangulation of $\mathcal{M}$, \textit{cfr.} Proposition \ref{p:poly_approx_prescribedsing}. In particular, a subset of the smooth part of $P'$ is a compact smooth submanifold with boundary embedded in a compact manifold with boundary, which we denote by $\Omega$ (ideally we would define $\Omega$ as $\mathcal{M}\setminus B_\delta$, but the latter does not have a smooth boundary: we will get around this technical obstruction by a standard regularization procedure, \textit{cfr.} Section \ref{s:smoothingneighborhood}): this object represents a relative homology class in $\mathbf{H}_m(\Omega,\partial \Omega, \mathbb{Z})$; this will induce, by Theorem \ref{t:Thomboundary} a map $$F:\Omega \rightarrow T(\widetilde{\gamma}^n)$$ with values in the Thom space of the universal oriented $n$-plane bundle and such that the pull-back of the Thom class equals the Poincaré dual of $\tau$, when restricted to $\Omega$. This is known as the (relative) \emph{Thom construction}.

Then, denoting by $Q$ the complement of a small neighborhood $U_\delta$ of the $(m-5)$-skeleton of the triangulation of $\mathcal{M}$, we note that $Q$ has the homotopy type of an $(n+4)$-dimensional skeleton of $\mathcal{M}$, \textit{cfr.} Lemma \ref{l:m-5/n+4}. Thus, we exploit the $n+4$-equivalence between $T(\widetilde{\gamma}^n)$ and the Eilenberg-MacLane space $\mathbf{K}(\mathbb{Z},n)$, \textit{cfr.} Lemma \ref{l:m+4_equivalenza}, to prove that the restriction of the Poincaré dual of $\tau$ to $Q$ admits a lift \begin{equation}\label{e:riassuntof}f:Q \rightarrow T(\widetilde{\gamma}^n)\end{equation} pulling back the Thom class to itself. Applying Theorem \ref{t:Thomboundary} and after some technicalities, this provides an integral cycle $R$ homologous to $\tau$ which is a closed smooth embedded submanifold with singularities all contained in the $(m-5)$-dimensional skeleton $\mathcal{K}^{m-5}$ of $\mathcal{M}$.

Since, by Lemma \ref{l:m-5/n+4}, $\Omega$ has the homotopy type of an $(n+1)$-dimensional complex, we observe that homotopy classes of maps defined on $\Omega$ and with values in $T(\widetilde{\gamma}^n)$ are in one-to-one correspondence with those with values in $\mathbf{K}(\mathbb{Z},n)$, \textit{cfr.} Corollary \ref{c:Whitehead_Homotopyclasses}. This allows us to conclude that the smooth part of $P'$ coincides, up to a homotopy, with the smooth part of $R$ once restricted to $\Omega$. 

The conclusion then follows from a technical geometric measure theory construction, \textit{cfr.} Proposition \ref{p:squash}, saying that if two $m$-dimensional integral cycles $P'$ and $R$ agree outside of a sufficiently small neighborhood of the $(m-2)$-skeleton of the triangulation of $\mathcal{M}$, then we can find a smooth deformation $R'$ of $R$ which is almost coinciding with $R$ and with mass close to the mass of $P'$. This provides the desired approximation $R'$ of Theorem $\ref{t:1}$, satisfying $(1)$ and $(2)$.

Finally, part $(3)$ of Theorem $\ref{t:1}$ is proved following the same lines: under the additional assumption that $\tau$ is representable by a smooth submanifold we immediately obtain a map $$g:\mathcal{M}\rightarrow T(\widetilde{\gamma}^n)$$ which pulls-back the Thom class to the Poincaré dual of $\tau$; the analogous construction can thus be performed just by replacing the map $f$ in \eqref{e:riassuntof} with $g$.

The rest of the paper is organized as follows. In Section \ref{s:notationandpreliminaries} we briefly recall the main notation in the theory of integral currents and some preliminary results in homotopy theory and cobordism. In Section \ref{s:smoothingneighborhood} we collect some technical preliminary lemmas about neighborhoods of skeleta and maps associated to them. In Section \ref{s:GMT} we will prove the two main technical propositions from geometric measure theory: Proposition \ref{p:poly_approx_prescribedsing} and Proposition \ref{p:squash}; an appendix with some elementary facts about triangulations and simplicial decompositions is listed in Section \ref{s:Appendix_triangulations}. Section \ref{s:proof} is dedicated to the proof of Theorem \ref{t:1} and Section \ref{s:optimality} shows the optimality of the construction. 
We add at the end another brief appendix, see Section \ref{s:cohomologyoperations}, recalling some introductory results about cohomology operations and characteristic classes, useful in the proof of Lemma \ref{l:m+4_equivalenza}.

\subsection*{Acknowledgements}
We are particularly grateful to Jacob Lurie and Dennis Sullivan for fruitful conversations. In addition, G.C. would like to thank Guido De Philippis, Mark Grant and Antonio Lerario for useful discussions, and the Institute for Advanced Study, whose warm hospitality is gratefully acknowledged. W.B. and G.C. would also like to thank Candace McCoy. The research of G.C. has been supported by the Associazione Amici di Claudio Dematt\'e.

\section{Notation and preliminary results}\label{s:notationandpreliminaries}
In this section we recall the main definitions and relevant notation. 

\subsection{Theory of integral currents}
We briefly recall the main notions of Federer and Fleming's theory of integral currents, \textit{cfr.} also \cite{FedererFleming60, Federerbook, Simonbook}.
The space of $k$-dimensional De Rham currents in $\R^N$ (i.e. continuous linear functionals on the space $\mathcal{D}^k(\R^N)$ of smooth and compactly supported differential $k$-forms in $\R^N$) is denoted by $\mathcal{D}_k(\R^N)$. The boundary of $T\in\mathcal{D}_k(\R^N)$ is defined enforcing Stokes' theorem, namely $\partial T(\varphi)=T(d\varphi)$, and if $\partial T=0$ then $T$ is called a cycle. The mass of $T$ is denoted by $\Mass(T)$ and is defined as the supremum of $T(\omega)$ over all forms $\omega$ with $|\omega(x)| \leq 1$ for all $x \in \R^N$, where $|\cdot|$ denotes an appropriately defined norm called comass. The support of $T$, denoted $\supp(T)$, is the intersection of all closed sets $C$ in $\R^N$ such that $T(\omega)=0$ whenever $\omega \equiv 0$ on $C$. For every compact Lipschitz neighborhood retract $M \subset \R^N$, we will denote by $\mathcal{D}_k(M)$ the set $$\mathcal{D}_k(M):=\{T \in \mathcal{D}_k(\R^N) \mid \spt(T) \subset M\}.$$

We recall that a current $T\in\mathcal{D}_k(\mathbb{R}^N)$ is integer rectifiable (and we write $T\in\Rect_k(\mathbb{R}^N)$) if we can identify $T$ with a triple $(E,\tau,\theta)$, where $E\subset K$ is a $k$-rectifiable set, $\tau(x)$ is a unit $k$-vector spanning the tangent space $T_xE$ at $\mathcal{H}^k$-a.e. $x$ and $\theta\in L^1(\mathcal{H}^k \res E, \mathbb{Z})$ is an integer-valued multiplicity. The identification means that the action of $T$ can be expressed by 
\begin{equation}\label{e:rectifiablecurrent} 
T(\omega)=\int_E\langle\omega(x),\tau(x)\rangle \, \theta(x) \, d\Haus^k(x), \quad \mbox{ for every $\omega\in \mathcal{D}^k(\R^N)$}.\end{equation} 
If $T$ is as in \eqref{e:rectifiablecurrent}, we denote it by $T=\llbracket E, \tau, \theta \rrbracket$. We will often use the shorthand notation $T=\theta \llbracket E \rrbracket$ if $\theta$ is constant and the orientation is clear from the context. We denote by $\mathbf{I}_k(\R^N)$ the subgroup of $k$-dimensional integral currents, that is the set of currents $T\in\Rect_k(\R^N)$ with $\partial T\in\Rect_{k-1}(\R^N)$. If $T=\llbracket E, \tau, \theta \rrbracket\in\Rect_k(\R^N)$ and $B \subset \mathbb{R}^N$ is a Borel set, we denote the restriction of $T$ to $B$ by setting $T\res B:=\llbracket E \cap B, \tau, \theta \rrbracket.$ The set of integer rectifiable (respectively integral) $k$-currents with support in a compact Lipschitz neighborhood retract $M$ is denoted by $\Rect_k(M)$ (respectively $\mathbf{I}_k(M)$). We denote by $\mathcal{Z}_k(M)$ the space  of integral cycles with support in $M$, \textit{i.e.} the space of integral currents $T \in \mathbf{I}_k(M)$ with $\partial T=0$.

We recall that the \emph{(integral) flat norm} $\Flat(T)$ of an integral current $T \in \mathbf{I}_k(M)$ is defined by: \begin{equation}\label{e:flatdef} \Flat(T) := \min \{ \Mass(R) + \Mass(S) \mid T=R +\partial S, \, R \in \mathbf{I}_k(M),\, S \in \mathbf{I}_{k+1}(M)\}.\end{equation}

Given a smooth, proper map $f : \R^N \rightarrow \R^{N'}$ and a $k$-current $T$ in $\R^N$, the \emph{push-forward} of $T$ according to the map $f$ is the $k$-current $f_{\sharp}T$ in $\R^{N'}$ defined by
\begin{equation}\label{d:push-forward}
f_{\sharp}T (\omega) := T(f^{*}\omega), \quad\mbox{for every $\omega\in\mathcal{D}^k(\R^{N'})$},\end{equation} 
where $f^{*}\omega$ denotes the pullback of $\omega$ through $f$. If $T$ is such that $\mathbb{M}(T), \mathbb{M}(\partial T)< \infty$ and $f:\mathbb{R}^N\rightarrow \mathbb{R}^{N'}$ a Lipschitz map such that $f_{|\text{spt}(T)}$ is proper, then the pushforward of $T$ via $f$ can be defined as follows. Let $\varphi \in C_c^\infty(\mathbb{R}^N)$ be a standard mollifier, denote $\varphi_\tau(x):=\tau^{-n}\varphi(\tau^{-1}x)$, for $\tau>0$, and let $f_\tau:= f*\varphi_\tau$ be the smoothing of $f$. The pushforward of $T$ via $f$ is defined as $$f_\sharp T(\omega):=\lim_{\tau \rightarrow 0}f_{\tau\sharp}T(\omega), \quad\text{for every } \omega \in \mathcal{D}^k(\mathbb{R}^{N'}).$$

A $k$-dimensional \emph{polyhedral}\footnote{A more appropriate term might be ``integral polyhedral'', allowing ``polyhedral'' to have also real coefficients. In this paper the coefficients will always be integers.} current (or \emph{polyhedral chain}) is a current $P$ of the form  \begin{equation}\label{e:poly} P:=\sum_{i=1}^d\theta_i\llbracket \sigma_i\rrbracket, \end{equation} where $\theta_i\in \mathbb N$, $\sigma_i$ are $k$-dimensional simplices in $\R^N$, oriented by (constant) $k$-vectors $n_i$ and $\llbracket \sigma_i \rrbracket=\llbracket \sigma_i, n_i, 1 \rrbracket$ is the multiplicity-one current naturally associated to $\sigma_i$. The subgroup of $k$-dimensional integer polyhedral currents in $\mathbb{R}^N$ will be denoted by $\mathscr{P}_k(\mathbb{R}^N)$, while $\mathcal{Z}_{k,Lip}(M)$ will be used for the set of $k$-dimensional integer Lipschitz cycles with support in $M$, that is the set of cycles of the form $f_{\sharp}(P)$ where $f: \mathbb{R}^N \rightarrow M$ is a Lipschitz map and $P \in \mathscr{P}_k(\mathbb{R}^N)$.

\subsection{Homotopy theory and cobordism}
We briefly recall the main topological notions that will be used later, we refer also to \cite{Spanier, Switzer, Thom54, MilnorStasheff}.

The \emph{mapping cylinder} $M_f$ of a continuous map $f: X \rightarrow Y$ is the quotient space formed from the disjoint union $(X \times [0,1]) \sqcup Y$ by identifying, for each $x \in X$, the point $(x, 1)$ with $f(x) \in Y$; it contains $X \times\{0\}$ as a subspace and has $Y$ as a deformation retract. 


For $n \geq 1$ and an abelian group $\pi$, the \emph{Eilenberg-MacLane space} $\mathbf{K}(\pi, n)$ is a space with the homotopy type of a $CW$-compex such that $\pi_i(\mathbf{K}(\pi,n))$ vanishes for $i \neq n$ and $\pi_n(\mathbf{K}(\pi,n))\simeq \pi$, where $\pi_i (X)$ denotes the $i$-th homotopy group of the topological space $X$. Recall that the Hopf homotopy classification theorem states that for a (connected) $CW$-complex $X$, an abelian group $\pi$ and for every $n \in \mathbb{N}\setminus \{0\}$ there is a natural isomorphism $$T: [X, \mathbf{K}(\pi, n)] \rightarrow \mathbf{H}^n(X,\pi),$$ where $[X, \mathbf{K}(\pi, n)]$ represents the set of (unbasedpointed) homotopy classes of continuous maps from $X$ to $\mathbf{K}(\pi, n)$ and $\mathbf{H}^n (X, \pi)$ is the $n$-th cohomology group of $X$ with coefficients in $\pi$. The isomorphism has the form $T([f]) = f^*(\iota)$, for a certain $\iota \in \mathbf{H}^n(\mathbf{K}(\pi, n),\pi)$ called the \emph{fundamental class}; $\mathbf{K}(\pi, n)$ is therefore the classifying space of $n$-dimensional cohomology with coefficients in $\pi$. This determines $\mathbf{K}(\pi,n)$ up to homotopy equivalence: that is, the homotopy type of $\mathbf{K}(\pi,n)$ is determined by $\pi$ and $n$, and the identity map of $\pi$ determines, up to homotopy, a canonical homotopy equivalence between any two copies of $\mathbf{K}(\pi,n)$.

We recall that a continuous map $f:X \rightarrow Y$ between path-connected\footnote{We restrict to the case $X$ and $Y$ are path-connected since we only need this in the sequel.} $CW$-complexes is called an \emph{$n$-equivalence} for $n\geq1$ if the induced homomorphism $$f_* : \pi_i(X) \rightarrow \pi_i(Y)$$ is an isomorphism for $0<i<n$ and an epimorphism for $i=n$.

We also recall that 
$f:X \rightarrow Y$ is an $n$-equivalence if and only if the inclusion $i:X \rightarrow M_f$ is an $n$-equivalence. From the long exact sequence of relative homotopy groups, it follows that $i$ is an $n$-equivalence if and only if the relative homotopy group $\pi_i(M_f,X)=0$ vanishes for all $i\leq n$. In order to keep our notation lighter, with a slight abuse we will sometimes write $\pi_i(Y,X)=0$, meaning $\pi_i (M_f,X)=0$ when the map $f$ is clear from the context.

We recall the following characterization of $n$-equivalence and the subsequent corollary.

\begin{pro}[\protect{\cite[Theorem 7.6.22]{Spanier}}]\label{p:Spanier7622}
If $f:X \rightarrow Y$ is an $n$-equivalence, then for every relative $CW$-complex $(K,L)$ with $K$ of dimension at most $n$, and every map $a:L\rightarrow X$ and $b:K\rightarrow Y$ with $b_{|L}=f \circ a$, there exists a map $c:K\rightarrow X$ with $c_{|L}=a$ and $f\circ c$ homotopic to $b$ relative to $L$.
\end{pro}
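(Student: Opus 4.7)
The plan is to reduce to the case where $f$ is an inclusion, via the mapping cylinder $M_f$, and then construct $c$ by induction on the relative CW-skeleta of $(K, L)$, with the vanishing of $\pi_k(M_f, X)$ for $k \leq n$ playing the role of the obstruction killer at each step. Specifically, I would first replace $f$ by the inclusion $i: X \hookrightarrow M_f$: the deformation retraction $r: M_f \to Y$ satisfies $r \circ i = f$, and the canonical inclusion $j: Y \hookrightarrow M_f$ is a homotopy equivalence with $r \circ j = \mathrm{id}_Y$. Since $f$ is an $n$-equivalence so is $i$, which, as recalled in the excerpt, amounts to $\pi_k(M_f, X) = 0$ for $1 \leq k \leq n$. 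Using the canonical cylinder homotopy between $i$ and $j \circ f$ within $M_f$ together with the homotopy extension property of the CW-pair $(K, L)$, one produces a map $\tilde b: K \to M_f$, homotopic to $j \circ b$, satisfying $\tilde b|_L = i \circ a$ on the nose. It then suffices to build $c: K \to X$ with $c|_L = a$ and a homotopy $H: K \times I \to M_f$ from $i \circ c$ to $\tilde b$ which is stationary on $L$; composing $H$ with $r$ yields $f \circ c \simeq b$ rel $L$.

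The construction of the pair $(c, H)$ proceeds by induction on $L \cup K^{(k)}$ for $k = 0, 1, \dots, n$, starting from $c|_L = a$ and $H|_{L \times I}$ constant. At the inductive step, assume $(c, H)$ have been defined on $L \cup K^{(k-1)}$. For each $k$-cell $e$ with characteristic map $\Phi_e: (D^k, S^{k-1}) \to (K, L \cup K^{(k-1)})$, the existing data provide a continuous map
\[
\alpha_e : (D^k \times \{1\}) \cup (S^{k-1} \times I) \;\longrightarrow\; M_f,
\]
given by $\tilde b \circ \Phi_e$ on $D^k \times \{1\}$ and by $H \circ (\Phi_e|_{S^{k-1}} \times \mathrm{id}_I)$ on $S^{k-1} \times I$, whose restriction to $S^{k-1} \times \{0\}$ equals $i \circ c \circ \Phi_e|_{S^{k-1}}$ and hence factors through $X$ on that locus. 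Since the domain of $\alpha_e$ is homeomorphic to $D^k$ with boundary $S^{k-1} \times \{0\}$, the map $\alpha_e$ represents an element of $\pi_k(M_f, X) = 0$, and is therefore homotopic rel $S^{k-1} \times \{0\}$ to a map into $X$. Reading off this deformation supplies simultaneously an extension of $c$ over the interior of $e$ (into $X$) and an extension of $H$ over $\bar e \times I$ compatible with the data already fixed.

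The main technical obstacle is guaranteeing that the cell-by-cell constructions assemble into globally defined continuous maps and that $H$ remains stationary on $L$ throughout the induction. This is handled by invoking the homotopy extension property of the CW-pair $(K \times I, K \times \{0\} \cup L \times I)$ at each stage, which lets the local cellular homotopies be absorbed into a single global homotopy rel $L$ without disturbing data fixed earlier. Because $\dim K \leq n$, the induction terminates after finitely many stages and the $n$-skeleton exhausts $K$, delivering the required map $c: K \to X$ together with the homotopy $f \circ c \simeq b$ rel $L$.
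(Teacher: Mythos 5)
The paper gives no proof of its own here, citing Spanier's Theorem 7.6.22, and your argument is precisely the standard one found there: replace $f$ by the inclusion $X\hookrightarrow M_f$, straighten $b$ over $L$ via the homotopy extension property, and induct over cells using the vanishing of $\pi_k(M_f,X)$ for $k\le n$ together with the compression criterion. Your proof is correct.
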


\begin{corollary}[\protect{\cite[Corollary 7.6.23]{Spanier}}]\label{p:Spanier7623}\label{c:Whitehead_Homotopyclasses}
If $K$ is a $CW$-complex and $f: X \rightarrow Y$ is an $n$-equivalence, then the induced homomorphism $$f_*:[K, X] \rightarrow[K, Y]$$ is a bijection if $\operatorname{dim} K<n$ and a surjection if $\operatorname{dim} K=n$. 
\end{corollary}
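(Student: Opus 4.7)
The plan is to deduce the corollary directly from Proposition \ref{p:Spanier7622}, using it twice: once on the pair $(K,\emptyset)$ to obtain surjectivity, and once on the pair $(K\times I,K\times\{0,1\})$ to obtain injectivity.

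For \emph{surjectivity} when $\dim K\leq n$, I would take an arbitrary $b\colon K\to Y$ representing a class in $[K,Y]$, set $L=\emptyset$, and let $a\colon L\to X$ be the (vacuous) empty map, so that the compatibility $b_{|L}=f\circ a$ is automatic. Since $(K,L)$ is a relative CW-complex with $\dim K\leq n$, Proposition \ref{p:Spanier7622} produces $c\colon K\to X$ with $f\circ c$ homotopic to $b$. Hence $f_*[c]=[b]$, proving that $f_*\colon[K,X]\to[K,Y]$ is onto.

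For \emph{injectivity} when $\dim K<n$, I would pick two maps $c_0,c_1\colon K\to X$ with $f_*[c_0]=f_*[c_1]$ and produce a homotopy from $c_0$ to $c_1$. By assumption there exists $b\colon K\times I\to Y$ with $b(\cdot,0)=f\circ c_0$ and $b(\cdot,1)=f\circ c_1$. Set $L=K\times\{0,1\}\subset K\times I$, which makes $(K\times I,L)$ a relative CW-complex of dimension $\dim K+1\leq n$. Define $a\colon L\to X$ by $a(\cdot,0)=c_0$ and $a(\cdot,1)=c_1$, so that $b_{|L}=f\circ a$ by construction. Proposition \ref{p:Spanier7622} then gives an extension $c\colon K\times I\to X$ with $c_{|L}=a$; this $c$ is precisely a homotopy from $c_0$ to $c_1$, showing $[c_0]=[c_1]$ in $[K,X]$.

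There is no real obstacle here beyond bookkeeping: the content of the corollary is a formal application of the extension-and-compression property of $n$-equivalences encapsulated in Proposition \ref{p:Spanier7622}. The only thing to be careful about is the dimension counting in the injectivity argument — the cylinder $K\times I$ has one extra dimension, which is why bijectivity requires the strict inequality $\dim K<n$ while surjectivity allows equality.
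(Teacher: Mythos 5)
Your argument is correct and is the standard deduction of Corollary 7.6.23 from Theorem 7.6.22 in Spanier. The paper does not prove this statement itself but simply cites Spanier, so there is no "paper's proof" to compare against; your two applications of Proposition~\ref{p:Spanier7622} (with $L=\emptyset$ for surjectivity, and with $(K\times I, K\times\{0,1\})$ for injectivity) reproduce exactly the argument one finds in Spanier. The dimension bookkeeping is also right: the cylinder construction raises the dimension by one, which is precisely why injectivity requires $\dim K < n$. One small point worth keeping in mind, though it does not affect the proof: the homotopy sets $[K,X]$, $[K,Y]$ here are unbased, as the paper notes when recalling the Hopf classification theorem, and the free homotopy produced by the extension $c: K\times I\to X$ is exactly what is needed.
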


We recall a classical result due to Whitehead, which allows to deduce homotopic properties of a space from its cohomological ones, \textit{cfr.} also \cite[Theorem II.6]{Thom54}.

\begin{thm}\label{t:II.6}
    Let $f: X \rightarrow Y$ a map between two simply connected $CW$-complexes $X,Y$. If for any group coefficient $\mathbb{Z}_p$ the induced homomorphism $$f^*: \mathbf{H}^i(Y,\mathbb{Z}_p) \rightarrow \mathbf{H}^i(X,\mathbb{Z}_p)$$ is an isomorphism when $i<k$ and a monomorphism when $i=k$, then the relative homotopy groups $\pi_i(Y,X)=0$, for $i\leq k$.
\end{thm}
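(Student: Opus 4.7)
The plan is to reduce $f$ to an inclusion via the mapping cylinder, turn the cohomological hypothesis into a vanishing statement for integral relative homology through universal coefficients, and then conclude with the relative Hurewicz theorem.

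First I would replace $f\colon X\to Y$ by the inclusion $j\colon X\hookrightarrow M_f$: since $Y$ is a deformation retract of $M_f$ the hypothesis on $f^*$ holds verbatim for $j^*$, and $\pi_i(Y,X)=\pi_i(M_f,X)$ by definition, so I may assume $X\subset M_f$ with both spaces simply connected. Inserting the hypothesis into the long exact sequence in cohomology of the pair,
\[
\mathbf{H}^{i-1}(M_f;\mathbb{Z}_p) \to \mathbf{H}^{i-1}(X;\mathbb{Z}_p) \to \mathbf{H}^{i}(M_f,X;\mathbb{Z}_p) \to \mathbf{H}^{i}(M_f;\mathbb{Z}_p) \to \mathbf{H}^{i}(X;\mathbb{Z}_p),
\]
one checks that for every $i\leq k$ the left arrow is surjective (it is the isomorphism assumed in degree $i-1<k$) and the right arrow is injective (isomorphism for $i<k$, monomorphism for $i=k$). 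Exactness immediately yields $\mathbf{H}^{i}(M_f,X;\mathbb{Z}_p)=0$ for every $i\leq k$ and every admissible coefficient group.

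Next I would convert this vanishing into integral homology through the universal coefficient short exact sequence
\[
0\to \mathrm{Ext}(\mathbf{H}_{i-1}(M_f,X;\mathbb{Z}),\mathbb{Z}_p)\to \mathbf{H}^i(M_f,X;\mathbb{Z}_p)\to \mathrm{Hom}(\mathbf{H}_i(M_f,X;\mathbb{Z}),\mathbb{Z}_p)\to 0,
\]
and argue by induction on $0\leq i\leq k$: the rational instance (or the $\mathbb{Z}$-coefficient one) rules out the free part of $\mathbf{H}_i(M_f,X;\mathbb{Z})$, while each $\mathbb{Z}_p$ instance rules out the $p$-primary torsion, producing $\mathbf{H}_i(M_f,X;\mathbb{Z})=0$ for every $i\leq k$. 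Finally, since $X$ and $M_f$ are simply connected (so in particular $\pi_1(M_f,X)=0$), the relative Hurewicz theorem applies and the vanishing of the integral relative homology up to degree $k$ forces $\pi_i(M_f,X)=0$ for the same range, which is the desired conclusion.

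The delicate point is the universal-coefficients step: one must interpret the hypothesis as covering $\mathbb{Z}_p$ for every prime $p$ together with $\mathbb{Z}$ (or $\mathbb{Q}$), and the induction has to be arranged so that the $\mathrm{Ext}$ term in the short exact sequence is already killed by the inductive hypothesis before one reads off the vanishing of $\mathrm{Hom}(\mathbf{H}_i(M_f,X;\mathbb{Z}),\mathbb{Z}_p)$. Once this bookkeeping is in place, the structure theorem for finitely generated abelian groups (together with the $\mathbb{Q}$ or $\mathbb{Z}$ input to control free parts) makes the passage from cohomological to homological vanishing a routine exercise, and the rest of the argument is a clean combination of the long exact sequence of a pair and the relative Hurewicz theorem.
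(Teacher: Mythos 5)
Your proposal is correct and follows essentially the same route as the paper: pass to the mapping cylinder pair $(M_f,X)$, feed the hypothesis into the long exact sequence of the pair to kill $\mathbf{H}^i(M_f,X;\mathbb{Z}_p)$ for $i\leq k$, transfer to vanishing of integral relative homology via universal coefficients, and finish with the relative Hurewicz theorem. The only cosmetic difference is in the middle: the paper first dualizes over the field $\mathbb{Z}_p$ to get $\mathbf{H}_i(M_f,X;\mathbb{Z}_p)=0$ and then invokes the homology universal coefficient formula, whereas you apply the cohomology universal coefficient sequence directly with an inductive bookkeeping of the $\mathrm{Ext}$ term -- both implement the same idea and require the same tacit finiteness (or inclusion of $\mathbb{Z}$/$\mathbb{Q}$ among the coefficient groups) to kill the free part.
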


\begin{proof}
Consider the following exact sequence in cohomology:
$$\mathbf{H}^r(M_f) \xrightarrow{f^*} \mathbf{H}^r(X) \rightarrow \mathbf{H}^{r+1}(M_f,X) \rightarrow \mathbf{H}^{r+1}(M_f) \rightarrow \mathbf{H}^{r+1}(X).$$ The assumptions on $f$ are equivalent to $\mathbf{H}^{i}(M_f,X,\mathbb{Z}_p)=0$ for every prime $p$ and $i\leq k$. By duality on the group coefficients $\mathbb{Z}_p,$ we can write $\mathbf{H}_{i}(M_f,X,\mathbb{Z}_p)=0$ for $i\leq k$ which is equivalent, by the universal coefficient formula, to $\mathbf{H}_{i}(M_f,X,\mathbb{Z})=0$ for $i\leq k$. Since $X$ and $Y$ are simply connected, by the relative Hurewicz theorem, \textit{cfr.} \cite[Theorem 7.5.4]{Spanier}, we conclude that $\pi_i(M_f,X)=0$ for $i\leq k$, namely our claim (recall that by $\pi_i (Y,X)$ we actually mean $\pi_i (M_f, X)$).  
\end{proof}

 Let $\widetilde{G}_n(\mathbb{R}^{n+k})$ be the oriented Grassmannian manifold, that is the space of oriented $n$-dimensional subspaces in $\mathbb{R}^{n+k}$. The natural embedding $\mathbb{R}^n \hookrightarrow \mathbb{R}^{n+1}$, induces an embedding $\widetilde{G}_n(\mathbb{R}^{n}) \hookrightarrow \widetilde{G}_n(\mathbb{R}^{n+1})$. Thus, forming the union over increasing dimensions we obtain an infinite $CW$-complex named the \emph{infinite oriented Grassmannian} $$\widetilde{G}_n:=\widetilde{G}_n(\mathbb{R}^\infty)=\bigcup_k \widetilde{G}_n(\mathbb{R}^{n+k}),$$ as the set of all $n$-dimensional linear subspaces of $\mathbb{R}^{\infty}$, endowed with the direct limit topology. 
We denote as $\widetilde{\gamma}^n$ the \emph{universal oriented n-plane bundle}, that is the canonical vector bundle over the base space $\widetilde{G}_n$ $$\widetilde{E} \xrightarrow{\pi} \widetilde{G}_n$$ with total space $\widetilde{E}$ consisting of pairs $(\ell, v) \in \widetilde{G}_n(\mathbb{R}^{\infty}) \times \mathbb{R}^{\infty}$ such that $v \in \ell$, topologized as a subset of the cartesian product, and with projection $\pi:\widetilde{E} \rightarrow \widetilde{G}_n$ such that $\pi(\ell,v)=\ell$.

Recall that any oriented $n$-plane\footnote{That is, all fibers are oriented $n$-dimensional real vector spaces.} bundle $\xi$ over a paracompact base $B$ admits a bundle map $\xi \rightarrow \widetilde{\gamma}^n$ and that any two bundle maps $f,g: \xi \rightarrow \widetilde{\gamma}^n$ from an oriented $n$-plane bundle $\xi$ to $\widetilde{\gamma}^n$ are bundle homotopic, meaning that there exists a one-parameter family of bundle maps $h_t : \xi \rightarrow \widetilde{\gamma}^n$, with $t \in [0,1]$ and $h_0=f$ and $h_1=g$ such that $h$ is continuous as a function of both variables, $\textit{cfr.}$ \cite[Theorems 5.6, 5.7]{MilnorStasheff}. 
Hence, any oriented $n$-plane bundle $\xi$ over a paracompact space $B$ determines, up to orientation-preserving isomorphism, a unique homotopy class of maps $\overline{f}_{\xi}:B \rightarrow \widetilde{G}_n$. Since the classifying space $\widetilde{G}_n$ for oriented $n$-plane vector bundles is the classifying space associated to the rotations group $SO(n)$, we will denote it as usual by $\BSOn$. In fact, in almost all our considerations we just need to consider $\widetilde{G}_n (\mathbb R^{n+k})$ for a sufficiently large $k$ and at all effects treat $\BSOn$ as some fixed compact manifold $\widetilde{G}_n(\mathbb{R}^{n+k})$ for a suitably large $k$.

We recall now the main notions of Thom spaces and Thom's characterization of representability of a homology class. 

Let $\xi$ be an $n$-plane bundle with a Euclidean metric and $A\subset E(\xi)$ be the subset of the total space consisting of all vectors $v$ with $|v|\geq 1$. Then the identification space $E(\xi)/A$ is called the \emph{Thom space} $T(\xi)$ of $\xi$. Note that $T(\xi)$ has a preferred base point, denoted by $\infty$, and the complement $T(\xi) \setminus \{\infty\}$ consists of all vectors $v \in E(\xi)$ with $|v|<1.$ We note that if the base space $B$ of $\xi$ is a (finite) $CW$-complex, then the Thom space $T(\xi)$ is an $(n-1)$-connected (finite) $CW$-complex. 

If $\xi$ is a smooth oriented $n$-plane bundle, then the base space $B$ of $\xi$ can be smoothly embedded as the zero-cross section in the total space $E(\xi)$, and hence in the Thom space $T(\xi)$; moreover we note that the while $T(\xi)$ is not a manifold in general, the complement of the base point $T(\xi)\setminus \{\infty\}$ has the structure of a smooth manifold. 

Let $R$ be a commutative ring with unity and $\xi$ an $n$-plane bundle $E \xrightarrow{\pi} B$. For a point $b \in B$, let $S_b^n$ be the one-point compactification of the fiber $\pi^{-1}(b)$; since $S_b^n$ is the Thom space of $\xi_{|b}$, we have a canonical map 
\[
i_b: S_b^n \rightarrow T (\xi)\, .
\]
An \emph{$R$-orientation}, or a \emph{Thom class}, of $\xi$ is defined to be an element $u \in \widetilde{\mathbf{H}}^n(T (\xi) , R)$ (the reduced $n$-th cohomology group of $T (\xi)$) such that, for every point $b \in B$, $i_b^*(u)$ is a generator of (the free $R$-module) $\widetilde{\mathbf{H}}^n(S_b^n)$. We recall now a fundamental theorem, \textit{cfr.} \cite[Theorem 15.51]{Switzer}.

\begin{thm}[Thom isomorphism theorem]
    Let $u \in \widetilde{\mathbf{H}}^n(T (\xi), R)$ be a Thom class for an $n$-plane bundle $\xi$ of the form $E\xrightarrow{\pi} B$. Define $$ \Phi: \mathbf{H}^i(B , R) \rightarrow \widetilde{\mathbf{H}}^{n+i}(T (\xi), R)$$ by the cup product $\Phi(x)=\pi^*(x) \smile u$. Then $\Phi$ is an isomorphism for every integer $i$.
\end{thm}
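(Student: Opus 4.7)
The plan is to follow the classical three-step strategy for the Thom isomorphism: first treat trivial bundles by reducing to a suspension/K\"unneth computation, then patch local trivializations via a Mayer--Vietoris argument, and finally pass to a direct limit for general paracompact $B$.

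For a trivial bundle $\xi_0 = B \times \mathbb{R}^n \to B$ the Thom space is canonically homeomorphic to the $n$-fold reduced suspension $\Sigma^n(B_+)$, and the Thom class $u_0$ corresponds to the smash product of a generator of $\widetilde{\mathbf{H}}^n(S^n, R)$ with the identity on $B_+$. Under this identification $\Phi(x) = \pi^*(x) \smile u_0$ becomes the external cross product with a generator of a free rank-one $R$-module, which the K\"unneth formula (or equivalently iterated suspension) identifies with an isomorphism $\mathbf{H}^i(B, R) \to \widetilde{\mathbf{H}}^{n+i}(\Sigma^n(B_+), R)$. By naturality the statement then holds for every bundle over a contractible base.

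Next, I would run a Mayer--Vietoris patching argument. Assume $B = U \cup V$ and that $\Phi$ is an isomorphism for the restrictions $\xi|_U$, $\xi|_V$, and $\xi|_{U \cap V}$. Because $i_b^*(u)$ is a generator of $\widetilde{\mathbf{H}}^n(S_b^n, R)$ at every $b$, the pullbacks of $u$ along these restrictions are again Thom classes. The Thom spaces of the restricted bundles fit into a relative Mayer--Vietoris long exact sequence, and naturality of $\pi^*$ and of the cup product yields a commuting ladder of long exact sequences with the corresponding maps $\Phi$ as vertical arrows. The five-lemma then gives the conclusion for $\xi$ over $B$. Induction on the cells of $B$ (or on a finite trivializing cover) proves the theorem whenever $B$ is a finite CW complex; the general paracompact case follows by exhausting $B$ by finite subcomplexes (respectively, compact subsets) and using the compatibility of $\Phi$ with the corresponding colimits in $B$ and limits in cohomology.

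The main obstacle will be verifying, in the patching step, that the Mayer--Vietoris connecting homomorphism commutes up to a consistent sign with the operation of cupping with $u$. This is not automatic and requires a cochain-level argument based on the Leibniz rule for the coboundary on cup products and a careful choice of cochain representatives for the Thom class on the overlap $U \cap V$. Once this sign-compatibility is secured, the remaining ingredients---the suspension computation, the five-lemma, and the colimit argument---are all formal.
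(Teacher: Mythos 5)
The paper does not actually prove the Thom isomorphism theorem: it states it as a recalled classical fact with a pointer to \cite[Theorem~15.51]{Switzer}, so there is no ``paper's own proof'' to compare against. Evaluated on its own terms, your sketch is the standard textbook argument for ordinary cohomology (trivial bundle via suspension/K\"unneth, then Mayer--Vietoris patching with the five lemma, then limit over a trivializing cover), essentially as in Milnor--Stasheff or Hatcher. Two remarks. First, you correctly flag the sign-compatibility of the connecting homomorphism with cupping by $u$; this is handled at the cochain level with the Leibniz rule and a choice of representative for $u$ compatible with restriction to the overlap, exactly as you say. Second, the final colimit step is more delicate than ``compatibility of $\Phi$ with the corresponding colimits in $B$ and limits in cohomology'' suggests: singular cohomology does \emph{not} commute with inverse limits, so for an infinite CW complex $B$ you need the Milnor exact sequence
\[
0 \to {\textstyle\lim^1}\,\mathbf{H}^{\,i-1}(B_k,R) \to \mathbf{H}^i(B,R) \to \textstyle\lim\,\mathbf{H}^i(B_k,R) \to 0
\]
over the skeleta $B_k$ (and the parallel sequence for $T(\xi|_{B_k})$), together with one more application of the five lemma. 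With that caveat your argument is complete. For context, Switzer's Theorem~15.51 is stated and proved in the generality of multiplicative generalized cohomology theories, where the induction-over-cells argument is replaced by a spectral-sequence/Leray--Hirsch style argument; your route is the more elementary one special to ordinary cohomology, which is all that the present paper needs.
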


We remark that for any oriented $n$-plane bundle the Thom class with $\mathbb{Z}$ coefficients exists and it is unique; analogously, for every $n$-plane bundle there exists a unique Thom class with $\mathbb{Z}_2$ coefficients, \textit{cfr.} \cite[Theorems 9.1, 8.1]{MilnorStasheff}.


Thom's celebrated result about realizability of cycles by means of submanifolds can be stated as follows, \textit{cfr.} \cite[Théorème II.1]{Thom54}.

\begin{thm}\label{t:Thomclosed}
    Given $\mathcal{M}$ and $\tau$ as in Assumption \ref{a:1}, a homology class $\tau \in \mathbf{H}_{m}(\mathcal{M}, \mathbb{Z})$ is representable by a $m$-dimensional smooth submanifold $\Sigma \subset \mathcal{M}$ of codimension $n$ if and only if there exists a map $f: \mathcal{M} \rightarrow T(\widetilde{\gamma}^n)$ which pulls back the Thom class\footnote{Recall that, since $R= \mathbb Z$ and $n\geq 1$, the reduced cohomology group coincides with the standard cohomology group.} $u \in \mathbf{H}^n(T(\widetilde{\gamma}^n), \mathbb{Z})$ to the Poincaré dual of $\tau$. 
\end{thm}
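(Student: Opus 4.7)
The plan is to prove both implications via the Thom--Pontryagin collapse construction, reducing first to a sufficiently large finite-dimensional approximation $\widetilde{G}_n(\mathbb{R}^{n+k})$ of $\BSOn$ so that all constructions take place in the smooth category (as already remarked in the paper).

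\emph{Sufficiency.} Assume $\tau = \iota_*[\Sigma]$ for a smooth embedded closed oriented submanifold $\iota: \Sigma \hookrightarrow \mathcal{M}$ of codimension $n$. The tubular neighborhood theorem produces an open neighborhood $U$ of $\Sigma$ diffeomorphic to the total space of the oriented normal bundle $\nu_\Sigma$, oriented compatibly with those of $\mathcal{M}$ and $\Sigma$. Collapsing $\mathcal{M}\setminus U$ to the basepoint defines a continuous map $c:\mathcal{M}\to T(\nu_\Sigma)$, and a classifying bundle map $\nu_\Sigma\to\widetilde{\gamma}^n$ induces $B:T(\nu_\Sigma)\to T(\widetilde{\gamma}^n)$. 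Set $f:=B\circ c$. By naturality of the Thom class, $B^{*}u$ is the Thom class $u_\Sigma$ of $\nu_\Sigma$, and the standard identification of the Thom--Pontryagin collapse (via the excision isomorphism $\mathbf{H}^n(\mathcal{M},\mathcal{M}\setminus\Sigma)\cong \mathbf{H}^n(U,U\setminus\Sigma)$ combined with the Thom isomorphism applied to $\nu_\Sigma$) yields $c^{*}u_\Sigma=\mathrm{PD}(\tau)$; hence $f^{*}u=\mathrm{PD}(\tau)$.

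\emph{Necessity.} Given $f:\mathcal{M}\to T(\widetilde{\gamma}^n)$ with $f^{*}u=\mathrm{PD}(\tau)$, compactness of $\mathcal{M}$ allows us to factor $f$ through some finite-dimensional $T(\widetilde{\gamma}^n_k)$, whose complement of the basepoint is a smooth manifold containing $\widetilde{G}_n(\mathbb{R}^{n+k})$ as the smooth codimension-$n$ zero section. Using smooth approximation on an open neighborhood of $f^{-1}(\widetilde{G}_n(\mathbb{R}^{n+k}))$ together with Thom's transversality theorem, we homotope $f$ (keeping the notation) to be transverse to $\widetilde{G}_n(\mathbb{R}^{n+k})$. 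Then $\Sigma:=f^{-1}(\widetilde{G}_n(\mathbb{R}^{n+k}))$ is a smooth closed embedded submanifold of $\mathcal{M}$ of codimension $n$, with $\nu_\Sigma$ canonically isomorphic to $f^{*}\widetilde{\gamma}^n_k$ as oriented bundles (the orientation of $\Sigma$ being induced by those of $\nu_\Sigma$ and $\mathcal{M}$). A tubular neighborhood of $\Sigma$ is mapped to a neighborhood of the zero section essentially as a bundle map; a radial homotopy then shows $f\simeq B\circ c$ as in the sufficiency step, whence $f^{*}u=\mathrm{PD}([\Sigma])$. Combined with the hypothesis and the injectivity of Poincaré duality, $[\Sigma]=\tau$.

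The main technical obstacle is the homotopy $f\simeq B\circ c$ in the necessity step: after transversality the tubular neighborhood map agrees with a bundle map only to first order along $\Sigma$, so one must straighten it using the fact that any two bundle maps into $\widetilde{\gamma}^n$ are bundle homotopic (\cite[Theorems 5.6, 5.7]{MilnorStasheff}), and then carefully interpolate on a collar so that the resulting radial homotopy glues continuously with the constant map to the basepoint $\infty$ on the complement of the tubular neighborhood. This is standard but requires some care at the boundary of the collar.
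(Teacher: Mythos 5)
The paper states Theorem~\ref{t:Thomclosed} without proof, simply citing Thom's Th\'eor\`eme II.1, so there is no internal argument to compare against. Your proof is the standard Thom--Pontryagin collapse argument --- precisely what Thom does --- and both directions are essentially correct, including the correct reduction to a finite-dimensional $T(\widetilde{\gamma}^n_k)$ so that everything takes place in the smooth category.

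One remark that would streamline the necessity direction: the collar-gluing you flag as the ``main technical obstacle'' can be avoided entirely. After making $f$ transverse to the zero section $\BSOn \subset T(\widetilde{\gamma}^n_k)$ and setting $\Sigma := f^{-1}(\BSOn)$, observe that $f$ induces a map of pairs $(\mathcal{M},\mathcal{M}\setminus\Sigma) \to \bigl(T(\widetilde{\gamma}^n_k),\, T(\widetilde{\gamma}^n_k)\setminus\BSOn\bigr)$. Lifting $u$ to the relative Thom class $\bar{u}$ on the right (possible since $T(\widetilde{\gamma}^n_k)\setminus\BSOn$ retracts to $\infty$), the pullback $f^*\bar{u} \in \mathbf{H}^n(\mathcal{M},\mathcal{M}\setminus\Sigma)$ maps to $f^*u$ in $\mathbf{H}^n(\mathcal{M})$. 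It then suffices to check on small normal disks at points of $\Sigma$ --- where transversality plus the chosen orientation conventions show $f$ has local degree $+1$ --- that $f^*\bar{u}$ is the Thom class of $\nu_\Sigma$, whose image in $\mathbf{H}^n(\mathcal{M})$ is $\mathrm{PD}([\Sigma])$ by excision and the standard localization description of Poincar\'e duality. This replaces the homotopy $f \simeq B\circ c$ and the interpolation on a collar by a purely cohomological naturality argument. Finally, you may wish to compare your sufficiency step with the remark following Theorem~\ref{t:Thomboundary} in the paper, where Lurie's variant constructs $f$ from a transverse section of an extended bundle over a neighborhood of $\Sigma$, bypassing the tubular neighborhood theorem altogether; the same construction works verbatim in the closed case.
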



By suitably modifying the proof of \cite[Théorème II.1]{Thom54}, one sees that the natural analog of Theorem \ref{t:Thomclosed} holds for compact manifolds with boundary. This is in fact what we will need in our arguments and we therefore provide a proof for the reader's convenience.

\begin{thm}\label{t:Thomboundary}
Let $\mathcal{M}$ be a connected smooth oriented compact $m+n$-dimensional Riemannian manifold with boundary $\partial \mathcal{M}$ and $\tau$ a nontrivial relative homology class $\tau \in \mathbf{H}_m(\mathcal{M},\partial \mathcal{M}, \mathbb{Z})$. Then $\tau$ is representable\footnote{With the obvious modifications in Definition \ref{d:smoothrepresentability} for $\mathcal{M},\Sigma$ with boundary and $\tau$ a relative homology class.} by a smooth compact embedded submanifold manifold $\Sigma \subset \mathcal{M}$ with $\partial \Sigma = \Sigma\cap \partial \mathcal{M}$ if and only if there exists a map $f: \mathcal{M} \rightarrow T(\widetilde{\gamma}^n)$ which pulls back the Thom class $u \in \mathbf{H}^n(T(\widetilde{\gamma}^n), \mathbb{Z})$ to the relative Poincaré dual of $\tau$. 
\end{thm}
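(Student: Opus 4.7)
The plan is to adapt Thom's original construction for closed manifolds (i.e. Theorem \ref{t:Thomclosed}) to the bordered setting, paying attention throughout to how things interact with $\partial \mathcal{M}$. The argument splits naturally into the two directions of the iff.

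\emph{Forward direction.} Suppose $\tau$ is represented by a compact smooth oriented submanifold $\Sigma \subset \mathcal{M}$ of codimension $n$ with $\partial \Sigma = \Sigma \cap \partial \mathcal{M}$. I would first arrange (by a small ambient isotopy, if necessary) that $\Sigma$ meets $\partial \mathcal{M}$ transversally; then, using a collar of $\partial \mathcal{M}$, one produces a tubular neighborhood $U \subset \mathcal{M}$ of $\Sigma$ diffeomorphic to the unit disk bundle of the oriented normal bundle $\nu_\Sigma$ and compatible with the collar along $\partial \Sigma$. The Thom collapse $c: \mathcal{M} \to \mathcal{M}/(\mathcal{M}\setminus U) \cong T(\nu_\Sigma)$ is then well defined. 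A classifying map $\bar g: \Sigma \to \BSOn$ for $\nu_\Sigma$ covers a bundle map $\nu_\Sigma \to \widetilde{\gamma}^n$, which descends to a map of Thom spaces $T(\nu_\Sigma) \to T(\widetilde{\gamma}^n)$. Setting $f$ to be the composition gives the desired map, and naturality of the Thom isomorphism together with the fact that $c^*$ applied to the Thom class of $\nu_\Sigma$ equals the relative Poincaré dual of $[\Sigma]$ (a standard identification) yields $f^*(u)=\mathrm{PD}_{\mathcal{M}}(\tau)$.

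\emph{Backward direction.} Suppose $f: \mathcal{M}\to T(\widetilde{\gamma}^n)$ satisfies $f^*(u)=\mathrm{PD}_{\mathcal{M}}(\tau)$. Since $T(\widetilde{\gamma}^n)\setminus\{\infty\}$ is a smooth manifold in which the zero section $\widetilde{G}_n$ is smoothly embedded, the strategy is to replace $f$ up to homotopy by a map that is smooth near $f^{-1}(\widetilde{G}_n)$ and transverse to $\widetilde{G}_n$, in a way that respects $\partial \mathcal{M}$. I would first apply smooth approximation and Thom transversality to $f|_{\partial \mathcal{M}}$, producing a homotopic map $\partial \mathcal{M}\to T(\widetilde{\gamma}^n)$ transverse to $\widetilde{G}_n$. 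By the homotopy extension property this homotopy extends to all of $\mathcal{M}$, and a further perturbation of $f$ on the interior (keeping $\partial \mathcal{M}$ fixed) makes $f$ globally transverse to $\widetilde{G}_n$. Define $\Sigma:=f^{-1}(\widetilde{G}_n)$. Transversality guarantees that $\Sigma$ is a smooth compact oriented submanifold of codimension $n$ with $\partial \Sigma=\Sigma\cap\partial \mathcal{M}$, and its normal bundle is $f^*\widetilde{\gamma}^n$. Reading $f$ through a tubular neighborhood of $\widetilde{G}_n$, one sees that $f^*(u)$ coincides with the relative Poincaré dual of $[\Sigma]$; combined with the hypothesis $f^*(u)=\mathrm{PD}_{\mathcal{M}}(\tau)$ and Poincaré--Lefschetz duality for $(\mathcal{M},\partial \mathcal{M})$, this gives $[\Sigma]=\tau$ in $\mathbf{H}_m(\mathcal{M},\partial \mathcal{M},\mathbb{Z})$.

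\emph{Main obstacle.} The substantive issue, compared to the closed case, is boundary-compatibility. In the forward direction one must set up the tubular neighborhood so that the Thom collapse is well-defined on a manifold with boundary (which is why one works in a collar and asks $\Sigma \pitchfork \partial \mathcal{M}$). In the backward direction the transversality argument must be carried out relatively: one first achieves transversality along $\partial \mathcal{M}$ and then extends over $\mathcal{M}$, so that the resulting preimage is a bona fide manifold with boundary rather than a stratified subset of $\mathcal{M}$. Once these two points are handled, the identification of pull-backs of the Thom class with relative Poincaré duals is formally the same computation as in the closed case of \cite[Th\'eor\`eme II.1]{Thom54}, using the Thom isomorphism theorem and naturality of the cup product.
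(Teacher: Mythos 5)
Your proof is correct, and the backward direction coincides essentially verbatim with the paper's: you first smooth and make $f|_{\partial\mathcal{M}}$ transverse to the zero section, extend via a collar and the homotopy extension property, perturb rel boundary to global transversality, and set $\Sigma=\widetilde f^{-1}(\BSOn)$ — exactly the chain of steps the paper carries out citing Wall. The forward direction is where you genuinely diverge. You run the classical Thom collapse directly in the bordered setting: after a small isotopy making $\Sigma$ meet $\partial\mathcal{M}$ transversally, you build a neat tubular neighborhood of $\Sigma$ compatible with a collar of $\partial\mathcal{M}$, collapse to $T(\nu_\Sigma)$, and postcompose with the map induced by a classifying bundle map $\nu_\Sigma\to\widetilde{\gamma}^n$. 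The paper instead forms the doubles $D(\mathcal{M})$ and $D(\Sigma)$, invokes the closed-manifold case (Theorem~\ref{t:Thomclosed}) to produce $F:D(\mathcal{M})\to T(\widetilde{\gamma}^n)$ with $F^*(u)=y=\mathrm{PD}_{D(\mathcal{M})}[D(\Sigma)]$, and restricts to $\mathcal{M}$, with the identity $i^*(y)=x$ delivering the relative Poincaré dual. The doubling route buys the closed case as a black box and sidesteps having to establish the neat tubular neighborhood theorem and the bordered Thom-class/Lefschetz-dual identification, while your route is more self-contained but must supply those boundary-compatible statements; both are standard and of comparable length. (The remark in the paper after the theorem records yet a third, tubular-neighborhood-free construction of the forward direction, due to Lurie, via a transversally vanishing section of an extension of $\nu_\Sigma$.)
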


\begin{proof} 
Assume that $\tau \in \mathbf{H}_m(\mathcal{M},\partial \mathcal{M}, \mathbb{Z})$ is representable by a smooth compact embedded submanifold $\Sigma \subset \mathcal{M}$ with $\partial \Sigma = \Sigma\cap \partial \mathcal{M}$, that is there exists a smooth embedding $h:\Sigma \rightarrow \mathcal{M}$ such that the fundamental class $[\Sigma]$ determined by the orientation of $\Sigma$ equals $\tau.$ Denote the relative Poincar\'e dual of $\tau$ by $x \in \mathbf{H}^{n}(\mathcal{M})$. If we consider $D(\mathcal{M})$ the double manifold of $\mathcal{M}$, then $D(\mathcal{M})$ is a smooth closed oriented manifold. In $D(\mathcal{M})$, we consider the double manifold of $\Sigma$, which is a smooth closed oriented embedded submanifold of $D(\mathcal{M})$, whose fundamental class is an absolute homology class in $\mathbf{H}_m(D(\mathcal{M}))$; denote by $y\in \mathbf{H}^{n}(D(\mathcal{M}))$ its Poincaré dual. By applying Thom's construction of Theorem \ref{t:Thomclosed}, we find a map $$F:D(\mathcal{M}) \rightarrow T(\univob)$$ such that $F^{*}(u)=y$. Denoting by $i: \mathcal{M} \rightarrow D(\mathcal{M})$ the inclusion map, we consider the restriction of $F$ to $\mathcal{M}$, so that we obtain a new map $f:\mathcal{M} \rightarrow T(\univob)$ such that $$f^*(u)= (F\circ i)^*(u)=i^*(y)=x.$$

Conversely, assume that there exists a map $f:\mathcal{M}\rightarrow T(\univob)$ such that $f^*(u)=x$. Consider the restriction of $f$ to $\partial \mathcal{M}$ and denote it by $\partial f$. The space $T(\univob)\setminus \{\infty\}$ is a smooth manifold: hence by \cite[Proposition 2.3.4]{Wall}, we can approximate the map of $\partial f$ by means of a new map $g_0$ agreeing with $\partial f$ on $\partial f^{-1}(U(\infty))$ and which is of class $C^{\infty}$ on $\partial\mathcal{M}\setminus \partial f^{-1}(U(\infty))$, where $U$ is a small smooth neighborhood of $\{\infty\}$; if the approximation is close enough, then $g_0$ is homotopic to $\partial f$. By the standard density argument, \textit{cfr.} \cite[Theorem 4.5.6]{Wall}, we can approximate $g_0$ by a homotopic map $g_1$ which is smooth and transverse to the zero section\footnote{With the usual approximation to the restriction of $\univob$ to a sufficiently large compact manifold $\widetilde{G}_n(\mathbb{R}^{n+k})$.} of $\univob$. Since $\partial \mathcal{M}$ has a collar neighborhood, by the homotopy extension property there is a map $f_1$ defined on $\mathcal{M}$, homotopic to $f$ and such that $\partial f_1=g_1$. By \cite[Proposition 2.3.4 (ii)]{Wall}, we can assume that $f_1$ is smooth and coincides with $g_1$ on $\partial \mathcal{M}$. By \cite[Proposition 4.5.7]{Wall}, we obtain a final map $\widetilde{f}$ arbitrarily close to $f$, agreeing with it on $f^{-1}(U(\infty))$, of class $C^{\infty}$ on $\mathcal{M}\setminus f^{-1}(U(\infty))$ and such that both $\widetilde{f}$ and $\partial \widetilde{f}$ are transverse to the zero section, \textit{cfr.} also \cite[Proposition 4.5.10]{Wall}. By applying the same proof as in \cite[Theorem 8.2]{Benedetti}, the preimage $\widetilde{f}^{-1}(\BSOn)$ is a smooth compact embedded submanifold $\Sigma \subset \mathcal{M}$ of codimension $n$, with $\partial \Sigma = \Sigma\cap \partial \mathcal{M}$. Hence we can conclude that $x =f^*(u)=\widetilde{f}^*(u)$.
\end{proof}

\begin{remark}
    A direct way to prove the first implication of Theorem \ref{t:Thomboundary} has been suggested to us by Jacob Lurie and it is the following. Let $\mathcal{M}, \tau, \Sigma$ as above. Choose an extension of the normal bundle $\nu_{\Sigma}$ of $\Sigma$ to some smooth $n$-plane bundle $\xi:E \rightarrow U$, where $U$ is an open neighborhood of $\Sigma$: this induces an isomorphism $q: \nu_{\Sigma} \rightarrow \xi|_{\Sigma}$; choose a connection on $\xi$. Working locally in each chart and then using a partition of unity, it is possible to construct a smooth section $s$ of $\xi$ which vanishes on $\Sigma$ and such that the covariant derivative of $s$ along $\Sigma$ induces the isomorphism $q$. Up to shrinking the open set $U$, one can assume that $s$ vanishes exactly on $\Sigma$ and that it is transverse to the zero section. Now it is possible to conclude in analogy with Thom's construction: choose a bundle map $\xi \rightarrow \widetilde{\gamma}^n$ and, after having rescaled $s$, note that it is possible to extend the classifying map with domain $U$ and with values in the corresponding subspace of the total space of $\widetilde{\gamma}^n$ to a map $f:\mathcal{M}\rightarrow T(\univob)$, sending the complement of $U$ to the 0-cell of $T(\univob)$. It follows that $x=f^*(u)$, where $x$ is the Poincaré dual of $\tau$ and $u\in \mathbf{H}^n(T(\univob),\mathbb{Z})$ the Thom's class. This argument shows that Thom's construction can be performed without any need of a tubular neighborhood theorem. 
    \end{remark}

We refer to Appendix \ref{a:cohomology} for a brief discussion about cohomology operations and characteristic classes, needed in the proof of Lemma \ref{l:m+4_equivalenza}.

\section{Triangulations, skeleta, neighborhoods, and maps}\label{s:smoothingneighborhood}

\subsection{Suitable neighborhoods of skeleta}\label{s:subsmoothingneighborhood}
Having fixed a triangulation $\mathcal{K}$ of $\mathcal{M}$ and a skeleton $\mathcal{K}^j$, we denote by $B_\delta (\mathcal{K}^j)$ the usual metric neighborhoods of the skeleton, namely the sets of points $p$ with $\dist (p, \mathcal{K}^j) < \delta$. In many instances we will use these neighborhoods for our considerations. However, for some important considerations we will in fact need a suitable variant, which will be denoted by
$V_\delta (\mathcal{K}^j)$ and are defined in the following way. We first fix a (sufficiently large) constant $C_0$ which will depend on the triangulation $\mathcal{K}$, subdivide the simplices forming $\mathcal{K}^j$ into $\mathcal{S}_0\cup \ldots \cup \mathcal{S}_j$ according to their dimension ($\mathcal{S}_i$ being the collection of simplices of dimension $i$) and hence
set
\begin{equation}\label{e:intorni-spigolosi}
V_\delta (\mathcal{K}^j) := \bigcup_{i=0}^j \bigcup_{\sigma \in \mathcal{S}_i} B_{C_0^{-i} \delta} (\sigma)\, 
\end{equation}
where 
\[
B_{C_0^{-i} \delta} (\sigma) = \{p: \dist (p, \sigma) < C_0^{-i} \delta\}\, .
\]
The following is an elementary consequence of our definition.

\begin{lemma}\label{l:spigoli}
For every triangulation $\mathcal{K}$ of $\mathcal{M}$ and every $j\leq m+n-1$ there is a choice of $\bar \delta>0$ (sufficiently small) and of $C_0$ sufficiently large such that the following holds. 
First of all, $\mathcal{M}\setminus V_{\bar\delta} (\mathcal{K}^j)$ is a deformation retract of $\mathcal{M}\setminus \mathcal{K}^j$. 

Moreover, for every point $p\in \partial V_{\bar\delta} (\mathcal{K}^j)$ there is at most one $\sigma$ in each $\mathcal{S}_i$ (with $0\leq i \leq j$) such that $p\in \partial B_{C_0^{-i} \bar\delta} (\sigma)$. In particular, there is a neighborhood of $U$ of $p$, an integer $\bar j\in \{1, \ldots, j\}$ and a diffeomorphism $\phi: U \to B_1 \subset \mathbb R^{m+n}$ (the unit ball in $\mathbb R^{m+n}$) such that 
\[
\phi (U\setminus V_{\bar\delta} (\mathcal{K}^j)) = \{(x_1, \ldots , x_{m+n}) : x_i>0 \mbox{ for $1\leq i \leq \bar{j}$}\}\, .
\]
\end{lemma}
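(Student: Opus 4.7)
The proof is elementary but relies on fine quantitative control, so the plan is to first fix the scales: the triangulation $\mathcal{K}$ being finite, there is a positive lower bound on the distance between any two disjoint simplices, on the dihedral angles between adjacent simplices sharing a face, and on the transversality angles between non-nested simplices meeting at a common subface. I will choose $C_0$ larger than certain reciprocals of these trigonometric quantities, and $\bar\delta$ small enough that each simplex $\sigma\in\mathcal{S}_i$ sits inside a tubular chart making $B_r(\sigma)$ a standard smooth tube for every $r\leq 2C_0^{-i}\bar\delta$, with the tubes around distinct $i$-simplices disjoint at that scale.

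For the uniqueness of the contributing $\sigma\in\mathcal{S}_i$ at a point $p\in\partial V_{\bar\delta}(\mathcal{K}^j)$, suppose $\sigma_1\neq \sigma_2$ both contribute. If $\sigma_1\cap \sigma_2=\emptyset$, the triangle inequality with the minimum disjoint-simplex distance contradicts smallness of $\bar\delta$. If $\sigma_1\cap \sigma_2=\tau$ has dimension $k<i$, linearize near $\tau$: the two simplices appear as planar pieces meeting along $\tau$ at an angle $\geq \theta_{\min}>0$, and the equidistance condition $\dist(p,\sigma_1)=\dist(p,\sigma_2)=C_0^{-i}\bar\delta$ forces $\dist(p,\tau)\leq (\sin(\theta_{\min}/2))^{-1}C_0^{-i}\bar\delta<C_0^{-k}\bar\delta$ as soon as $C_0^{i-k}>(\sin(\theta_{\min}/2))^{-1}$; but then $p$ lies in the interior of $B_{C_0^{-k}\bar\delta}(\tau)\subset V_{\bar\delta}(\mathcal{K}^j)$, contradicting $p\in\partial V_{\bar\delta}(\mathcal{K}^j)$.

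A similar scale-separation argument shows that for $C_0$ sufficiently large the contributing simplices at $p$ form a chain $\sigma_{i_1}\subsetneq\cdots\subsetneq\sigma_{i_{\bar j}}$: any non-nested pair would share only a lower-dimensional common face, and comparing the order-$C_0^{-i_l}\bar\delta$ and order-$C_0^{-i_{l'}}\bar\delta$ constraints against the transversality bound would force a quantitative inconsistency. In the linearizing chart of the minimal element (with $\sigma_{i_l}$ occupying $\Span(e_1,\dots,e_{i_l})$), the perpendicularity $(p-q_l)\perp T_{q_l}\sigma_{i_l}$ yields $q_l=(p_1,\dots,p_{i_l},0,\dots,0)$, so the $\bar j$ vectors $p-q_l$ exhibit a staggered zero-pattern and their normalizations are manifestly linearly independent; the implicit function theorem then delivers the claimed diffeomorphism onto the orthant $\{x_1,\dots,x_{\bar j}>0\}$. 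The deformation retraction of $\mathcal{M}\setminus \mathcal{K}^j$ onto $\mathcal{M}\setminus V_{\bar\delta}(\mathcal{K}^j)$ is then assembled inductively from top to bottom dimension by pushing points of each $B_{C_0^{-i}\bar\delta}(\sigma)\setminus\sigma$ radially outward along nearest-point geodesics, glued via partition of unity, with the orthant charts controlling the behavior near the corners of $\partial V_{\bar\delta}(\mathcal{K}^j)$.

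The main technical obstacle will be establishing the chain structure of contributing simplices rigorously, i.e.\ quantitatively excluding ``V-shape'' configurations: this requires a careful case analysis over the possible combinatorial configurations of simplices meeting at $p$ together with uniform lower bounds on the transversality angles between non-nested simplices sharing a common subface, inputs which must be collected before the outward normals can be certified as a transverse frame.
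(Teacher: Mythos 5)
The paper gives no proof of this lemma --- it is declared ``an elementary consequence of our definition'' of $V_\delta(\mathcal{K}^j)$ in \eqref{e:intorni-spigolosi} --- so there is no argument of the authors to measure yours against; what you supply is the elementary-but-fiddly argument the authors chose to suppress. Your outline is the right one: the geometric scale separation $C_0^{-i}\bar\delta$ between dimensions, combined with the uniform lower bounds on inter-simplex distances and dihedral angles available for a fixed finite triangulation, is exactly what forces the contributing $\sigma\in\mathcal{S}_i$ at a boundary point to be unique, and the same quantitative mechanism (forcing $p$ into a smaller-scale tube around the common face) rules out non-nested pairs of contributing simplices.

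Two steps you leave as sketches should still be carried out, as you yourself flag. The chain structure of the contributing simplices is essential, since the staggered-zero-pattern computation of the normals presupposes a nested chain; the simplicial-complex axiom that any two simplices meet in a common face makes your case split exhaustive, and the inequality to record is that if $\sigma_{i_a},\sigma_{i_b}$ both contribute at $p$ with neither containing the other, then transversality gives $\dist(p,\tau)\leq C(\theta_{\min})\,C_0^{-\min(i_a,i_b)}\bar\delta$, which is strictly less than $C_0^{-\dim\tau}\bar\delta$ once $C_0>C(\theta_{\min})$ (using $\dim\tau<\min(i_a,i_b)$); this puts $p$ strictly inside $B_{C_0^{-\dim\tau}\bar\delta}(\tau)$ and hence in the interior of $V_{\bar\delta}(\mathcal{K}^j)$, contradicting $p\in\partial V_{\bar\delta}(\mathcal{K}^j)$. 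For the deformation retraction, gluing the radial outward flows from tubes at different scales with a partition of unity still needs the check that the time-$1$ map lands in $\mathcal{M}\setminus V_{\bar\delta}(\mathcal{K}^j)$ and that the homotopy is coherent on overlaps; the corner charts you construct supply the local model where this can be verified by induction on skeleton dimension (retracting away from top-dimensional faces of $\mathcal{K}^j$ first, then from lower-dimensional ones). Neither is a structural gap, but neither is automatic.
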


Note in particular that the boundary of $V_{\bar\delta} (\mathcal{K}^j)$ is a Lipschitz submanifold. This is, however, not suitable for our purposes; we need an appropriate regularization of it which, given the explicit local description of Lemma \ref{l:spigoli}, is a consequence of a standard regularization procedure. 

\begin{lemma}\label{l:spigoli-allisciati}
Let $\mathcal{K}$ be a triangulation of $\mathcal{M}$, let $\bar \delta$ and $C_0$ be given by Lemma \ref{l:spigoli}, and fix any pair of positive numbers $\delta' <\delta<\bar\delta$. Then there is a neighborhood $U_\delta (\mathcal{K}^j)$ of $\mathcal{K}^j$ with the following properties:
\begin{itemize}
\item $V_\delta (\mathcal{K}^j) \supset U_\delta (\mathcal{K}^j) \supset V_{\delta'} (\mathcal{K}^j)$;
\item The boundary of $U_\delta (\mathcal{K}^j)$ is smooth;
\item $\mathcal{M}\setminus U_\delta (\mathcal{K}^j)$ is a deformation retract of $\mathcal{M}\setminus V_{\delta'} (\mathcal{K}^j)$;
\item There is a smooth tubular neighborhood $\mathcal{C}$ of $\partial U_\delta (\mathcal{K}^j)$ in $\mathcal{M}$ containing $\partial V_{\delta'} (\mathcal{K}^j)$. 
\end{itemize}
\end{lemma}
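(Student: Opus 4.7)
The plan is to realize $V_\delta(\mathcal{K}^j)$ as the strict superlevel set $\{g>0\}$ of a Lipschitz function $g$ on $\mathcal{M}$, smooth $g$ to a smooth $\widetilde g$, and cut out $U_\delta(\mathcal{K}^j)$ as a strict superlevel set $\{\widetilde g>s\}$ for a regular value $s$ picked via Sard's theorem. Concretely, fix $\delta''\in(\delta',\delta)$ and set
\[
g(p):=\max_{0\leq i\leq j}\ \max_{\sigma\in\mathcal{S}_i}\bigl(C_0^{-i}\delta''-\dist(p,\sigma)\bigr),
\]
so that $V_{\delta''}(\mathcal{K}^j)=\{g>0\}$. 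There is a quantitative gap $c>0$, depending on the parameter spacings and $C_0$, such that $g\geq 2c$ on $\clos{V_{\delta'}(\mathcal{K}^j)}$ and $g\leq -2c$ on $\mathcal{M}\setminus V_\delta(\mathcal{K}^j)$.

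I would build $\widetilde g$ in two steps. First regularize each $\dist(\cdot,\sigma)$ to a smooth function on $\mathcal{M}$ (for instance by replacing it with $\sqrt{\dist(\cdot,\sigma)^2+\eps^2}-\eps$, or by mollifying locally; in either case the modified function $\dist_\eps(\cdot,\sigma)$ is smooth and uniformly $\eps$-close to $\dist(\cdot,\sigma)$, with gradient of norm close to $1$ away from $\sigma$). Then replace the outer max by the softmax
\[
\widetilde g(p):=\eta\log\sum_{i,\sigma}\exp\bigl((C_0^{-i}\delta''-\dist_\eps(p,\sigma))/\eta\bigr).
\]
For small $\eps,\eta$, $\widetilde g$ is smooth and $C^0$-close to $g$, and $\nabla\widetilde g$ is a convex combination of the nearly unit vectors $-\nabla\dist_\eps(\cdot,\sigma)$ weighted by positive softmax weights. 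By Sard's theorem I pick a regular value $s\in(0,c)$ and define $U_\delta(\mathcal{K}^j):=\{\widetilde g>s\}$; the nesting $V_{\delta'}(\mathcal{K}^j)\subset U_\delta(\mathcal{K}^j)\subset V_\delta(\mathcal{K}^j)$ is then immediate and $\partial U_\delta(\mathcal{K}^j)=\widetilde g^{-1}(s)$ is smooth by the implicit function theorem.

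For the tubular neighborhood $\mathcal{C}$ and the deformation retract, I would use the flow of $\nabla\widetilde g$ rather than the normal exponential map. Since $|\nabla\widetilde g|$ is uniformly bounded below on the strip $\{-c<\widetilde g<c\}$ (see the next paragraph), the flow of $\nabla\widetilde g/|\nabla\widetilde g|^2$ realizes a diffeomorphism $\partial U_\delta(\mathcal{K}^j)\times(-c-s,\,c-s)\to\mathcal{C}:=\{\widetilde g\in(-c,c)\}$, so $\mathcal{C}$ is a smooth tubular neighborhood of $\partial U_\delta(\mathcal{K}^j)$ and contains $\partial V_{\delta'}(\mathcal{K}^j)$ by the gap estimate. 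The deformation retract of $\mathcal{M}\setminus V_{\delta'}(\mathcal{K}^j)$ onto $\mathcal{M}\setminus U_\delta(\mathcal{K}^j)$ is then produced by flowing along $-\nabla\widetilde g$ until reaching $\partial U_\delta$, cut off by a smooth function equal to $1$ on $U_\delta\setminus V_{\delta'}$ and vanishing outside a slightly larger strip.

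The main obstacle is the uniform lower bound on $|\nabla\widetilde g|$ in the strip $\{-c<\widetilde g<c\}$: the softmax gradient is a convex combination of the nearly unit vectors $-\nabla\dist_\eps(\cdot,\sigma_k)$ for the \emph{active} simplices, and one must rule out cancellation. This is handled by the local geometric structure from Lemma \ref{l:spigoli}: in a neighborhood of each boundary point of $V_{\delta''}(\mathcal{K}^j)$, a diffeomorphism reduces the picture to $\{(x_1,\ldots,x_{m+n}):x_i>0\text{ for }1\leq i\leq\bar j\}$, and the active gradients correspond to the inward-pointing coordinate vectors $e_1,\ldots,e_{\bar j}$, which are linearly independent. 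Any convex combination of such vectors has norm at least $1/\sqrt{m+n}$, yielding the required uniform lower bound.
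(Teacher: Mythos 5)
Your overall strategy—pass to a smoothing of a Lipschitz defining function for the neighborhood and take a level set, with the corner structure of Lemma~\ref{l:spigoli} providing the non-cancellation needed for a uniform gradient lower bound—is exactly the kind of ``standard regularization procedure'' the paper has in mind, and the softmax/flow machinery you set up is a reasonable way to implement it. However, there is a genuine error in the containment $\partial V_{\delta'}(\mathcal{K}^j)\subset\mathcal{C}$, and it is not just a matter of adjusting constants.

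Your own gap estimate says $g\geq 2c$ on $\clos{V_{\delta'}(\mathcal{K}^j)}$, hence in particular on $\partial V_{\delta'}(\mathcal{K}^j)$, so after smoothing $\widetilde g\geq c$ there. But $\mathcal{C}:=\{\widetilde g\in(-c,c)\}$, so $\partial V_{\delta'}(\mathcal{K}^j)$ lies entirely \emph{outside} $\mathcal{C}$, not inside it. The deeper problem is that a single threshold on $g$ does not cleanly parameterize the region between $\partial V_{\delta'}$ and $\partial V_\delta$: on $\partial V_{\delta'}$ the values of $g$ spread over roughly the interval $[C_0^{-j}(\delta''-\delta'),\,\delta''-\delta']$, with the value depending on the dimension $i$ of the simplex realizing the max, and no choice of $\delta''$ collapses this spread into $(-c,c)$. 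A cleaner parameterization is to use instead $h(p):=\min_{0\leq i\leq j}\min_{\sigma\in\mathcal{S}_i}C_0^{\,i}\dist(p,\sigma)$, for which $V_\mu(\mathcal{K}^j)=\{h<\mu\}$ \emph{exactly}, so that $\partial V_{\delta'}\subset\{h=\delta'\}$ and $\partial V_\delta\subset\{h=\delta\}$; a soft-min smoothing $\widetilde h$ and a regular level $t\in(\delta',\delta)$ then give $U_\delta:=\{\widetilde h<t\}$ and $\mathcal{C}:=\{\delta'-\eps<\widetilde h<\delta+\eps\}$, which now really does contain both $\partial U_\delta$ and $\partial V_{\delta'}$. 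With that repair, your flow and deformation-retract arguments go through essentially as written. Note, however, that the wider strip then also forces you to establish the gradient lower bound on a whole range of scales $\mu\in[\delta',\delta]$, not just at the single scale $\bar\delta$ where Lemma~\ref{l:spigoli} states the corner model; the same argument applies, but you should say so explicitly rather than cite the lemma as giving the model throughout the strip.

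Two smaller points. First, $\sqrt{\dist(\cdot,\sigma)^2+\eps^2}-\eps$ is not actually smooth: for a closed simplex $\sigma$, $\dist(\cdot,\sigma)^2$ is only $C^{1,1}$ in general (it has second-order kinks where the nearest-point projection switches faces), and this regularization does not remove them; stick with mollification. Second, once you have a uniform lower bound on $|\nabla\widetilde g|$ in the relevant strip, every value in that range is automatically regular, so the appeal to Sard's theorem is superfluous; the two claims are in tension as written and should be reconciled (either drop Sard or drop the claim of a uniform lower bound and work only with the flow near the chosen regular level).
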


\subsection{Maps} We will now build some special maps related to the neighborhoods $B_\delta$ and $V_\delta$.

\begin{lem}\label{l:Phi}
Let $\mathcal{M}$ be as in Assumption \ref{a:1} and $\mathcal{K}$ a triangulation of $\mathcal{M}$. For every $\varepsilon_a>0$ and $ \eta_a >0$ there is a positive number $\delta_a <  \eta_a$ with the following property. If $\gamma\in ]0,1]$ is an arbitrary number, then there is a diffeomorphism $\Phi$ such that:
\begin{enumerate}\itemsep0.2em
\item $\Phi$ is isotopic to the identity and it coincides with the identity on $\mathcal{M}\setminus B_{\eta_a} (\mathcal{K}^k)$;
\item $\Lip (\Phi) \leq 1+ \varepsilon_a$;
\item For every point $p\in B_{\delta_a} (\mathcal{K}^k)$ there is an orthonormal frame $e_1, \ldots, e_{m+n}$ such that 
\begin{align}
&|d\Phi_p (e_i)|\leq 1+ \varepsilon_a \qquad &\forall i\in \{1, \ldots, k\},\\
&|d\Phi_p (e_j)|\leq \gamma &\forall j\in \{k+1, \ldots, m+n\}\, .
\end{align}
\end{enumerate}
\end{lem}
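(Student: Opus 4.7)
The plan is to build $\Phi$ as a composition of radial contractions around each simplex of $\mathcal{K}^k$, organized by dimension from top ($i=k$) down to bottom ($i=0$). The key local ingredient is the following model: in normal exponential coordinates $(y,z)\in\mathbb{R}^i\times\mathbb{R}^{m+n-i}$ along a smooth $i$-submanifold, the map $(y,z)\mapsto(y,\chi(|z|)z)$ is a diffeomorphism equal to the identity outside $\{|z|<\eta\}$, isotopic to the identity, $(1+\varepsilon_a)$-Lipschitz, and contracts the full normal $(m+n-i)$-plane by factor $\gamma$ inside $\{|z|\leq\delta\}$, provided $\chi:[0,\infty)\to[\gamma,1]$ is smooth non-decreasing with $\chi\equiv\gamma$ on $[0,\delta]$, $\chi\equiv 1$ on $[\eta,\infty)$, and $r\chi'(r)+\chi(r)\leq 1+\varepsilon_a$. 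The last constraint is achievable because the average slope of $r\mapsto r\chi(r)$ on $[\delta,\eta]$ equals $(\eta-\gamma\delta)/(\eta-\delta)$, which is $\leq 1+\varepsilon_a$ once $\delta$ is chosen small compared with $\varepsilon_a\eta$.

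Since the triangulation is smooth, each simplex $\sigma$ of dimension $i\leq k$ in $\mathcal{K}^k$ extends to a smooth $i$-submanifold $\wdt\sigma$ of $\mathcal{M}$, and I would transplant the model above through a normal exponential chart on a tubular neighborhood of $\wdt\sigma$ to obtain a diffeomorphism $\Phi_\sigma$ of $\mathcal{M}$. An extra cutoff $\chi_2$ along $\wdt\sigma$ localizes the support of $\Phi_\sigma$ near the interior of $\sigma$ and makes $\Phi_\sigma$ the identity close to $\partial\sigma$; as long as the transition region of $\chi_2$ is wide relative to the tube radius $\eta_\sigma$, this extra cutoff contributes at most $\varepsilon_a$ to the Lipschitz constant. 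I would then pick a rapidly decreasing sequence of radii $\eta_k>\eta_{k-1}>\cdots>\eta_0$, all smaller than $\eta_a$, with corresponding $\delta_i\leq \tfrac{\varepsilon_a}{4}\eta_i$, arranged so that (a) the $\eta_i$-tubes around distinct simplices of the same dimension $i$ are pairwise disjoint, and (b) whenever $\sigma^i\subsetneq\sigma^j$ with $i<j$, the overlap of their tubes is contained in the region where the $\chi_2$ of $\sigma^j$ vanishes, so that $\Phi_{\sigma^j}$ is the identity there. The required diffeomorphism is then
\[
\Phi \,:=\, \Phi^{(0)}\circ\cdots\circ\Phi^{(k)}, \qquad \Phi^{(i)} \,:=\, \prod_{\sigma\in\mathcal{S}_i}\Phi_\sigma,
\]
the inner product within fixed $i$ being well-defined because its factors commute by (a).

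Once the combinatorial setup is in place, the three conclusions follow readily: (1) is immediate from the support and isotopy properties of each $\Phi_\sigma$; (2) follows because, by (a)--(b), at every point of $\mathcal{M}$ at most one factor $\Phi_\sigma$ is non-trivial in a neighborhood, so $\Lip(\Phi)\leq 1+\varepsilon_a$ pointwise; and for (3) I would take $\delta_a:=\min_\sigma \delta_\sigma$, noting that any $p\in B_{\delta_a}(\mathcal{K}^k)$ lies within $\delta_\sigma$ of some simplex $\sigma$ of some dimension $i\leq k$ and hence in the fully contracted core of $\Phi_\sigma$; an orthonormal basis $e_{k+1},\ldots,e_{m+n}$ of any $(m+n-k)$-dimensional subspace of the normal plane to $\wdt\sigma$ at $p$ (possible because $i\leq k$), completed arbitrarily to an orthonormal basis of $T_p\mathcal{M}$, yields the frame required in (3), with the bound on the first $k$ vectors inherited from (2). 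The hard part will be arranging (a)--(b): making sure that in each overlap of supports at most one of the $\Phi_\sigma$ is non-trivial, so that the Lipschitz constant does not compound across factors. This forces the radii $\eta_i$ to decrease geometrically with $i$, which is precisely the motivation for the peculiar radii $C_0^{-i}\delta$ used in the definition of $V_\delta$ in Section~\ref{s:subsmoothingneighborhood}, and I would exploit those neighborhoods as templates for the tubes around each $\sigma$.
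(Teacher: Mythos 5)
Your overall plan shares the broad outline of the paper's argument: a local radial-contraction model on tubular neighborhoods of simplices, assembled over $\mathcal{K}^k$ with some care about how tubes of different dimensions interact. But the way you propose to organize that interaction has a genuine gap that breaks conclusion (3).

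The problem is in the final step, where you claim that any $p\in B_{\delta_a}(\mathcal{K}^k)$ lies in the fully contracted core of some $\Phi_\sigma$. This is false under conditions (a)--(b). By (b), for $\sigma^i\subsetneq\sigma^j$ the map $\Phi_{\sigma^j}$ is cut off (via $\chi_2$) on the whole tube of radius $\eta_i$ around $\sigma^i$. But the core of $\Phi_{\sigma^i}$ has radius $\delta_i\leq\frac{\varepsilon_a}{4}\eta_i\ll\eta_i$. So in the annulus $\delta_i<\dist(\cdot,\sigma^i)<\eta_i$ (intersected with a thin tube around $\sigma^j$) \emph{neither} map supplies the $\gamma$-contraction: $\Phi_{\sigma^j}$ is already the identity there, and $\Phi_{\sigma^i}$ is in its transition region, scaling by a factor strictly between $\gamma$ and $1$ but not by $\gamma$. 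Such points certainly lie in $B_{\delta_a}(\mathcal{K}^k)$ (take $p$ at distance, say, $3\delta_v$ from a vertex $v$ but within $\delta_a$ of an edge $e\ni v$), and there conclusion (3) fails. The underlying tension is that you want the core radii $\delta_i$ tiny compared with the tube radii $\eta_i$ (to control the Lipschitz constant), which makes the transition annuli (width $\sim\eta_i$) far too wide to be covered by the lower-dimensional cores (width $\sim\delta_{i-1}$); the hard part you flagged is real, but it is not just about Lipschitz compounding, and it cannot be solved by disjoint supports alone.

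The paper resolves exactly this by composition rather than disjointness, and by making the contraction factor \emph{variable}. It proves the lemma by induction on $k$: take $\Phi_{k-1}$ from the inductive hypothesis, with parameters $\varepsilon_0,\gamma_0$ and resulting core radius $\delta_0$. Then it builds $\Phi_k$ on normal tubes of the open $k$-cells, but with contraction factor $\bar\mu(s)$ depending on $s=\dist(\cdot,\mathcal{K}^{k-1})$: equal to $1$ (no contraction) for $s\leq\delta_0/4$, interpolating down to $\mu$ for $s\geq 3\delta_0/4$. Crucially, the transition scale of $\Phi_k$ near $\mathcal{K}^{k-1}$ is \emph{the same} as the core radius $\delta_0$ of $\Phi_{k-1}$, not a vastly larger tube radius. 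Setting $\Phi=\Phi_{k-1}\circ\Phi_k$, the verification of (3) splits into two cases with no gap between them: if $\dist(p,\mathcal{K}^{k-1})\geq 3\delta_0/4$ then $\Phi_k$ already contracts $m+n-k$ directions by $\mu$ and the Lipschitz bound on $\Phi_{k-1}$ is enough; if $\dist(p,\mathcal{K}^{k-1})<3\delta_0/4$ then $\Phi_{k-1}$ contracts $m+n-k+1$ directions by $\gamma_0$ at the relevant image point, and a short linear-algebra argument shows this survives precomposition with $d\Phi_k$ on some $(m+n-k)$-plane. The price is that the Lipschitz constants compound ($(1+\varepsilon_0)(1+\cdots)$), which the paper absorbs by choosing $\varepsilon_0$ in terms of $\varepsilon_a$ up front. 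Your $C_0^{-i}\delta$ heuristic points in the right direction, but it must be implemented through this inductive blending of contraction factors, not through nested disjoint supports.
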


Before coming to the proof of the latter lemma, we remark that a simple modification of the arguments gives the following one, which is in fact much simpler.

\begin{lemma}\label{l:second-Phi}
Let $\mathcal{M}$ be as in Assumption \ref{a:1}, $\mathcal{K}$ a triangulation, $j\in \{0, \ldots, m+n-1\}$. If $C_0$ and $\bar\delta^{-1}$ in Lemma \ref{l:spigoli} are chosen sufficiently large, then for every $\delta_b> \delta'_b>0$ there is a Lipschitz map $\Phi: \mathcal{M}\to \mathcal{M}$ with the following properties:
\begin{itemize}
\item $\Phi$ maps $V_{\delta'_b} (\mathcal{K}^j)$ into $\mathcal{K}^j$;
\item $\Phi$ is a smooth diffeomorphism between $\mathcal{M}\setminus \Phi^{-1} (\mathcal{K}^j)$ and $\mathcal{M}\setminus \mathcal{K}^j$;
\item $\Phi (p)=p$ for every $p\not\in V_{\delta_b} (\mathcal{K}^j)$. 
\end{itemize}
\end{lemma}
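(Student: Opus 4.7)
The plan is to build $\Phi$ as a composition $\Phi = \Phi_0 \circ \Phi_1 \circ \cdots \circ \Phi_j$ of simplex-wise radial squash maps, one dimension at a time from top ($\dim = j$) down to bottom ($\dim = 0$). The map $\Phi_i$ will collapse, for every $\sigma \in \mathcal{S}_i$, the inner tube $B_{C_0^{-i}\delta'_b}(\sigma)$ onto (a smooth extension of) $\sigma$, while being the identity outside the outer tube $B_{C_0^{-i}\delta_b}(\sigma)$.

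First I would fix, for each $\sigma \in \mathcal{S}_i$, a slightly enlarged smooth submanifold $\tilde\sigma \supset \sigma$ (smoothing the corners of $\sigma$ so that the tubular neighborhood theorem applies), and identify a tubular neighborhood $\mathcal{T}_\sigma$ with the normal disk bundle of $\tilde\sigma$. In fiber coordinates $(p,v)$ with $p \in \tilde\sigma$ and $v \in \nu_p\tilde\sigma$ I define
\[
\psi_\sigma(p,v) \;=\; \Bigl(p,\; \chi_i(|v|)\,\tfrac{v}{|v|}\Bigr),
\]
where $\chi_i \colon [0,\infty) \to [0,\infty)$ is smooth and nondecreasing, identically $0$ on $[0,C_0^{-i}\delta'_b]$, equal to $r$ for $r \geq C_0^{-i}\delta_b$, and strictly increasing on the intermediate annulus. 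Then $\psi_\sigma$ is a smooth diffeomorphism from $\mathcal{T}_\sigma\setminus\psi_\sigma^{-1}(\tilde\sigma)$ onto $\mathcal{T}_\sigma\setminus\tilde\sigma$, it crushes $B_{C_0^{-i}\delta'_b}(\sigma)$ into $\tilde\sigma$, and it equals the identity outside the outer tube. Since two distinct $i$-simplices can only meet along $\mathcal{K}^{i-1}$, by choosing $C_0$ large the outer tubes $B_{C_0^{-i}\delta_b}(\sigma)$ and $B_{C_0^{-i}\delta_b}(\sigma')$ for $\sigma\neq\sigma'$ are disjoint outside a neighborhood of $\mathcal{K}^{i-1}$; this exceptional neighborhood is absorbed into the much larger tubes $B_{C_0^{-(i-1)}\delta_b}(\tau) = B_{C_0\cdot C_0^{-i}\delta_b}(\tau)$ with $\tau \in \mathcal{S}_{i-1}$, which will be processed at the next step. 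Consequently $\Phi_i$ can be unambiguously assembled: $\psi_\sigma$ on each outer tube of $\sigma \in \mathcal{S}_i$ and the identity elsewhere.

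Composing the $\Phi_i$'s from top to bottom and verifying the three conclusions is then essentially bookkeeping. Every $\Phi_i$ is the identity outside $V_{\delta_b}(\mathcal{K}^j)$, so is $\Phi$; every $\Phi_i$ sends $B_{C_0^{-i}\delta'_b}(\sigma)$ into $\sigma \subset \mathcal{K}^j$ and the subsequent factors in the composition keep these points inside $\mathcal{K}^j$, so $\Phi$ maps $V_{\delta'_b}(\mathcal{K}^j)$ into $\mathcal{K}^j$; and each $\Phi_i$ is a smooth diffeomorphism on the complement of $\Phi_i^{-1}(\mathcal{K}^j)$, so the composition inherits that property on the complement of $\Phi^{-1}(\mathcal{K}^j)$.

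The main obstacle is the inter-stage interaction: after applying $\Phi_j,\ldots,\Phi_{i+1}$, one has to check that the "fault regions" of these earlier squashes (where the radial model degenerates near the faces of their simplices) lie inside $V_{\delta_b}(\mathcal{K}^{i})$ so that the next squash $\Phi_i$ absorbs them cleanly, and that the composition remains smooth off $\Phi^{-1}(\mathcal{K}^j)$. This is exactly where the $C_0^{-i}$ hierarchy of \eqref{e:intorni-spigolosi} with $C_0\gg 1$ is used: each stage's defect is geometrically small compared to the next stage's working scale. Smoothness off the preimage of the skeleton then follows because the radial squashes are smooth diffeomorphisms away from their zero sections, and the coherent nesting of tubes provided by Lemma \ref{l:spigoli} ensures that their compositions never produce singularities in $\mathcal{M}\setminus\Phi^{-1}(\mathcal{K}^j)$.
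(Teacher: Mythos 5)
Your overall plan — composing simplex-wise radial squash maps, one dimension of the skeleton at a time from top down, exploiting the $C_0^{-i}$ hierarchy of \eqref{e:intorni-spigolosi} — is the same strategy the paper uses: the paper proves Lemma \ref{l:second-Phi} by remarking that it is a (simpler, since $\mu$ may be taken equal to $0$) modification of the inductive construction in the proof of Lemma \ref{l:Phi}. However, there is a genuine gap in your construction of the individual stage maps $\psi_\sigma$, and it is precisely the point at which the paper's proof does real work.

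You define $\psi_\sigma(p,v) = (p,\chi_i(|v|)\,v/|v|)$ in fiber coordinates of the tubular neighborhood $\mathcal{T}_\sigma$ of the enlarged manifold $\tilde\sigma$, with $\chi_i$ depending only on the fiber radius $|v|$. Such a purely radial squash cannot be extended by the identity to all of $\mathcal{M}$: it is the identity on the cylindrical part of $\partial\mathcal{T}_\sigma$ (where $|v|$ exceeds the outer radius), but not on the ``endcaps'' of the tube near $\partial\tilde\sigma$, so the extension is discontinuous there. Relatedly, once two neighboring $i$-simplices $\sigma,\sigma'$ meet along $\mathcal{K}^{i-1}$, their outer tubes necessarily overlap near $\mathcal{K}^{i-1}$, and on the overlap the prescriptions $\psi_\sigma$ and $\psi_{\sigma'}$ conflict, so $\Phi_i$ is not actually well-defined as a single map by ``$\psi_\sigma$ on each outer tube and identity elsewhere.'' You acknowledge the issue (``the main obstacle is the inter-stage interaction\ldots'') but the hierarchy of scales does not resolve it by itself; the overlaps occur already within the same dimension $i$, before any later stage has been applied.

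What is missing is a \emph{tangential} fade-out of the squash: the radial scaling factor must depend not only on the fiber distance $|v|$ but also on the position along the simplex, becoming $1$ (identity) sufficiently near $\mathcal{K}^{i-1}$. This is exactly what the function $\bar\mu(s)$ in the paper's proof of Lemma \ref{l:Phi} does, together with the replacement of $F$ by the truncation $F' = F \setminus B_{\delta_0/16}(\mathcal{K}^{k-1})$ before taking a normal neighborhood (which avoids the tubular-neighborhood-of-a-manifold-with-corners difficulty that your $\tilde\sigma$ construction only partially addresses). With that tangential interpolation in place, the normal neighborhoods of the truncated faces are pairwise disjoint, each $\Phi_k$ is globally defined and Lipschitz, and the composition hands the points near $\mathcal{K}^{k-1}$ unmodified to the next stage as you intend. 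Without it, the stage maps themselves are not well-defined.
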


\begin{proof}[Proof of Lemma \ref{l:Phi}] The proof is by induction over $k$.

\medskip

We start with the first step, where $k=0$. We enumerate the $0$-skeleton as the points $p_1, \ldots, p_N$, we let $d$ be the minimum of $\dist (p_i, p_j)$ and $r_0$ a radius which is smaller than the minimum of the injectivity radii for the exponential maps centered at $p_i$ and whose choice will be specified later. We then set $\eta:= \min \{\eta_a, \frac{d}{4}, \frac{r_0}{2}\}$. We fix $\delta_a < \eta$ and $\mu\geq 0$ (whose choice will be specified later) and let $\varphi: [0, \infty[ \to [0, \infty[$ be the following piecewise linear increasing function:
\[
\varphi (t) = \left\{
\begin{array}{ll}
\mu t \qquad &\mbox{if $0\leq t \leq 2\delta_a$,}\\
\frac{\eta- \delta_a - 2\mu \delta_a}{\eta-3\delta_a} (t-2\delta_a) + 2\mu \delta_a \qquad &\mbox{if $2\delta_a \leq t \leq \eta-\delta_a$,}\\
t &\mbox{if $t\geq \eta-\delta_a$.}
\end{array}
\right.
\]
Observe that $0\leq \varphi'\equiv \mu$ on $[0, 2\delta_a]$ while $0\leq \varphi' \leq \frac{\eta}{\eta-3\delta_a}$ everywhere else, and the latter number can be made arbitrarily close to 1 depending only on the ratio $\frac{\delta_a}{\eta}$, but independently of $\mu$. We then regularize $\varphi$ by convolution with a standard smooth nonnegative kernel, hence getting a smooth diffeomorphism of the real half line, which we denote by $\psi$. Its derivative $\psi'$ will enjoy the same global upper bound and, by choosing the kernel suitably, we can ensure $\psi (t)=\mu t$ on the interval $[0, \delta_a]$ and $\psi (t)=t$ on $[\eta-\frac{\delta_a}{2}, \infty[$. We next define the map $\Psi_\mu (x) := \psi (|x|) \frac{x}{|x|}$ from $\mathbb R^{m+n}$ onto itself. Notice that $\Lip (\Psi_\mu)$ can be made arbitrarily close to $1$ choosing the ratio $\frac{\delta_a}{\eta}$ very small, while clearly $\Psi_\mu (x)=\mu x$ in the ball of radius $B_{\delta_a}$. We are now ready to define the map $\Phi$. $\Phi (p):=p$ for $p\not \in \mathcal{M}\setminus B_{\eta} (\mathcal{K}^0)$. Next $B_\eta (\mathcal{K}^0)$ is the disjoint union of $B_{\eta} (p_i)$. On each such ball we consider the exponential map ${\rm exp}_{p_i}$ and $\Phi$ is defined to be 
\[
\Phi := {\rm exp}_{p_i} \circ \Psi_\mu \circ {\rm exp}_{p_i}^{-1}\, . 
\]
By choosing the radius $r_0$ sufficiently small we can get the Lipschitz constant of the exponential maps and of their inverses arbitrarily close to $1$ on the domains of our interest. So, if we fix some constant $\tilde{\varepsilon}$, after choosing $\frac{\delta_a}{\eta}$ and $r_0$ sufficiently small, we can easily achieve
\[
{\rm Lip}\, (\Phi) \leq (1+\tilde{\varepsilon})^3
\]
and 
\[
{\rm Lip}\, (\Phi_{|B_{\delta_a} (\mathcal{K}^0)}) \leq (1+\tilde{\varepsilon})^2 \mu
\]
In particular we first choose $\tilde{\varepsilon}$ so that $(1+\tilde{\varepsilon})^3 \leq 1+ \varepsilon_a$ and we then choose $\mu$ so that $(1+\tilde{\varepsilon})^2 \mu\leq \gamma$. Note that the choice of $\delta_a$ is then independent of $\gamma$.

\medskip

We next wish to tackle the induction step, so we assume that the statement of the proposition holds for $k-1$ in place of $k$. We then fix $\varepsilon_a$, $\eta_a$, and $\gamma$. We apply the proposition in case $k-1$ with the same $\eta_a$ but with $\gamma_0$ and $\varepsilon_0$ in place of $\gamma$ and $\varepsilon_a$ and get the corresponding $\delta_a$, which we denote by $\delta_0$. The choices of $\varepsilon_0$ and $\gamma_0$ will be specified later, but we anticipate that $\varepsilon_0$ will only depend on $\varepsilon_a$ among all these parameters. We therefore then have a corresponding map, which we denote by $\Phi_{k-1}$, with the property that ${\rm Lip} (\Phi_{k-1})\leq 1+ \varepsilon_0$, which is the identity outside of $B_{\eta_a} (\mathcal{K}^{k-1})$ and which in turn satisfies all the requirements of the lemma for the other parameters. 

Next we consider $\delta_a, \eta$, and $\mu$, whose choices will be specified later. For each $k$-dimensional face $F$ in the $k$-dimensional skeleton, we consider 
\[
F':= F\setminus B_{\delta_0/16} (\mathcal{K}^{k-1}) 
\]
and we choose $\eta$ small enough so that the normal neighborhoods $N_\eta (F')$ are pairwise disjoint and all diffeomorphic to $F'\times B^{m+n-k}_\eta$, where $B^{m+n-k}_\eta$ denotes the $m+n-k$ dimensional ball of radius $\eta$ and centered at $0$ in $\mathbb R^{m+n-k}$. We then parametrize $N_\eta (F')$ as $(x,y)$, where $x\in F'$ and $y\in B^{m+n-k}_\eta$. 

We next introduce a function of two variables defined in the following way. First of all we define
$\bar \mu: [0, \infty[ \to [0, \infty[$ as
\[
\bar \mu (s) = \left\{\begin{array}{ll}
1 \qquad &\mbox{if $s\leq \frac{\delta_0}{4}$},\\
1- 2 \frac{(1-\mu)}{\delta_0} (s-{\textstyle{\frac{\delta_0}{4}}}) \qquad &\mbox{if $\frac{\delta_0}{4}\leq s \leq \frac{3\delta_0}{4}$},\\
\mu \qquad &\mbox{if $s\geq \frac{3\delta_0}{4}$.}
\end{array}
\right.
\]
Hence we set
\[
\varphi (s,t) = \left\{
\begin{array}{ll}
t \bar \mu (s) \qquad &\mbox{if $t\leq 2\delta_a$},\\
\frac{\eta-\delta_a -2\bar \mu (s) \delta_a}{\eta-3\delta_a} (t-2\delta_a) + 2\bar \mu (s) \delta_a \qquad &\mbox{if $2\delta_a \leq t \leq \eta-\delta_a$},\\
t &\mbox{if $t\geq \eta-\delta_a$}.
\end{array}
\right.
\]
Note that $\partial_t{\varphi} \equiv \mu$ if $t\leq 2\delta_a$ and $s\geq \frac{3\delta_0}{4}$ while $\partial_t{\varphi} \equiv 1$ if $t\geq \eta-\delta_a$ or for every $t$ if $s\leq \frac{\delta_0}{4}$. On the other hand we have the upper bound
\[
\left|\frac{\partial \varphi}{\partial t}\right|\leq \frac{\eta}{\eta-3\delta_a}\, ,
\]
where the right hand side can be made arbitrarily close to $1$ by choosing $\frac{\delta_a}{\eta}$ small. Likewise we have the upper bound
\[
\left|\frac{\partial \varphi}{\partial s}\right| \leq \frac{4\eta}{\delta_0}
\]
and the right hand side can be made arbitrarily small by choosing $\frac{\eta}{\delta_0}$ small. 

These two requirements (namely $\frac{\eta}{\eta-3\delta_a}$ being sufficiently close to $1$ and $\frac{4\eta}{\delta_0}$ being sufficiently close to $0$) will only depend on $\varepsilon_a$, $\varepsilon_0$ and the geometry of the triangulation, while $\varepsilon_0$ will only depend on $\varepsilon_a$ and the geometry of the triangulation, so that ultimately $\delta_a$ will depend in fact only on $\varepsilon_a$, $\eta_a$, $\mathcal{M}$ and $\mathcal{K}$. 

With a similar regularization procedure as the one outlined above, we can smooth $\varphi$ to a function $\psi$. Then we also suitably smooth the distance function $p\mapsto \dist (p, \mathcal{K}^{k-1})$ in $B_{\delta_0} (\mathcal{K}^{k-1})\setminus B_{\frac{\delta_0}{8}} (\mathcal{K}^{k-1})$ to a function $d$. We then define a function $\Phi_k : N_\eta (F')\to N_\eta (F')$ by setting 
\[
\Phi_k (x,y) := \left(x,  \psi (d(x,0), |y|){\textstyle{\frac{y}{|y|}}}\right)\, .
\]
Being the $N_\eta (F')$ pairwise disjoint, this define a function on the union of them. Finally, since the function is the identity at the boundary of this domain, we can extend it to all of $\mathcal{M}$ by being the identity. We then claim that the function $\Phi:= \Phi_{k-1}\circ \Phi_k$ in fact satisfies all the requirements upon choosing our parameters correctly. 
The bound on the Lipschitz constant simply follows by multiplying the bounds of the Lipschitz constants of the two maps and choosing the parameters correctly. The fact that the map is equal to the identity outside of $B_{\eta_a} (\mathcal{K}^k)$ follows from choosing $\eta\leq \eta_a$. It remains to check the third claim. We will check that the claim holds at every $p$ such that $\dist (p, \mathcal{K}^k)\leq \delta_a$ but $\dist (p, \mathcal{K}^{k-1})\geq 3\frac{\delta_0}{4}$ and at every $p$ such that $\dist (p, \mathcal{K}^{k-1})\leq 3\frac{\delta_0}{4}$. The union of the two sets clearly contains $B_{\delta_a} (\mathcal{K}^k)$, and this completes the proof. First of all observe that if $p$ is in the first set, then by construction there are $m+n-k$ orthonormal vectors $e_{k+1}, \ldots, e_{m+n}$ with the property that 
\[
|d \Phi_{k}|_p (e_i)|\leq 2\mu\, .
\]
On the other hand because of the Lipschitz bound on $\Phi_{k-1}$ we immediately conclude that 
\[
|d\Phi|_p(e_i)|\leq 2 (1+\varepsilon_0)\mu
\]
and thus choosing $\mu$ appropriately we can guarantee $2 (1+\varepsilon_0) \mu \leq \gamma$. Completing the $e_i$'s to an orthonormal basis we get the desired estimate on the other vectors simply using the global Lipschitz estimate on $\Phi$.

Consider now a $p\in B_{3\delta_0/4} (\mathcal{K}^{k-1})$. By construction $q=\Phi_{k-1} (p)$ belongs to $B_{\delta_0} (\mathcal{K}^{k-1})$. There are therefore $m+n+1-k$ orthonormal vectors $v_k, \ldots , v_{m+n}$ with the property that 
\[
|d\Phi_{k-1}|_q (v_i)|\leq \gamma_0\, . 
\]
Consider the vector space $V$ spanned by these vectors. Then we have the estimate
\[
|d\Phi_{k-1} (v)|\leq \sqrt{m+n+1-k}\, \gamma_0 |v|
\]
for every such $v$. Because $\Phi$ is a diffeomorphism, there is an $m+n-k$-dimensional subspace $W$ of $T_p \mathcal{M}$ which $d\Phi|_p$ maps onto $V$. If we choose an orthonormal base $e_{k+1}, \ldots , e_{m+n}$ of the latter, we can then estimate
\[
|d\Phi_p (e_i)|\leq \sqrt{m+n-k}\, \gamma_0\, \Lip (\Phi_k) \, .
\]
We then conclude by choosing $\gamma_0 \leq \frac{\gamma}{2\sqrt{m+n-k}}$, given that our constructions certainly implies $\Lip (\Phi_k)\leq 1+\varepsilon_a\leq 2$.
\end{proof}

\section{Tools from geometric measure theory}\label{s:GMT}

The proof of our main theorems will make use of two technical propositions from geometric measure theory, which do not involve the knowledge of sophisticated topological tools.

\begin{pro} \label{p:poly_approx_prescribedsing}
Let $\mathcal{M}$ be as in Assumption \ref{a:1} and $T$ be an integral $m$-dimensional cycle in $\mathcal{M}$. 
For every fixed $\varepsilon_c > 0$ there is an integral cycle $P$ homologous to $T$ and a smooth triangulation $\mathcal{K}$ of $\mathcal{M}$ with the following properties:
\begin{itemize}
\item[($a_0$)] $\mathbb{M}(P) \leq (1+\varepsilon_c) \mathbb{M}(T)$;
\item[($b_0$)] $\mathbb{F} (T-P)\leq \varepsilon_c$;
\item[($c_0$)] $\spt(P) \subset \{ x : \dist(x,\spt(T))\leq \varepsilon_c\}$;
\item[($d_0$)] $P = \sum_{F\in \mathcal{F}^m} \beta_F \llbracket F \rrbracket$, where $\beta_F\in \mathbb Z$ and $\mathcal{F}^m$ is the collection of $m$-dimensional cells of $\mathcal{K}$ with an appropriately chosen orientation.
\end{itemize}
Furthermore, for a sufficiently small $\delta_c'>0$ and any $\delta_c<\delta_c'$ we can find a second integral cycle $P'$ homologous to $P$ with the following properties:
\begin{itemize}
\item[(a)] $\mathbb{M} (P') \leq (1+3\varepsilon_c) \mathbb{M} (T)$ and $\mathbb{F} (T-P') \leq 3\varepsilon_c$;
\item[(b)] $\spt(P') \subset \{ x : \dist(x,\spt(T))\leq 3\varepsilon_c\}$;
\item[(c)] $\|P'\| (B_{\delta'_c} (\mathcal{K}^{m-2})) \leq 3\varepsilon_c$;
\item[(d)] $P'\res \mathcal{M}\setminus B_{\delta_c} (\mathcal{K}^{m-2})= \llbracket\Gamma \rrbracket$ for some smooth oriented submanifold $\Gamma$ of $\mathcal{M}\setminus B_{\delta_c} (\mathcal{K}^{m-2})$ without boundary in $\mathcal{M}\setminus B_{\delta_c} (\mathcal{K}^{m-2})$.
\end{itemize}
\end{pro}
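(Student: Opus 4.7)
For the first part, the plan is a manifold version of the Federer--Fleming strong polyhedral approximation theorem. I would fix a smooth triangulation $\mathcal{K}$ of $\mathcal{M}$ of sufficiently small mesh (e.g.\ by iteratively subdividing an initial triangulation, or by pulling back a fine cubical grid in $\mathbb{R}^N$ via a smooth Lipschitz retraction of a tubular neighborhood of $\mathcal{M}\subset \mathbb{R}^N$) and then apply the deformation theorem cellwise to push $T$ onto $\mathcal{K}^m$. This produces a polyhedral cycle $P=\sum_{F\in \mathcal{F}^m}\beta_F\llbracket F\rrbracket$ together with an $(m+1)$-dimensional filling $S$ with $\partial S=P-T$ of small mass supported in an $\varepsilon_c$-neighborhood of $\spt(T)$, giving ($b_0$), ($c_0$), ($d_0$) directly. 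The optimal factor $(1+\varepsilon_c)$ in ($a_0$) is obtained by the classical randomization-of-grid trick: after a generic small rigid perturbation of $\mathcal{K}$, at $\mathcal{H}^m$-a.e.\ point of $T$ the approximate tangent $m$-plane is nearly parallel to one of the incident $m$-cells, which makes the cellwise deformation nearly isometric on most of the mass of $T$.

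For the second part, I would transform $P$ into $P'$ by a two-step surgery. \emph{Sheet replacement:} on each $m$-cell $F$ with nonzero $\beta_F$, replace $\beta_F\llbracket F\rrbracket$ by $|\beta_F|$ disjoint normally-shifted parallel copies of (the interior of) $F$, all oriented with sign $\mathrm{sgn}(\beta_F)$, located at carefully ordered distances $0<\eta_1<\dots<\eta_{|\beta_F|}$ from $\partial F$. \emph{Tube surgery at $\mathcal{K}^{m-1}$:} at each $(m-1)$-cell $E\subset \mathcal{K}^{m-1}\setminus \mathcal{K}^{m-2}$ the cycle condition $\partial P=0$ translates into $\sum_{F\supset E}\epsilon_{F,E}\beta_F=0$, where $\epsilon_{F,E}=\pm 1$ is a combinatorial sign. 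This cancellation allows us to fix a combinatorial matching of the $\sum_{F\supset E}|\beta_F|$ sheets abutting $E$ into oppositely-oriented pairs (possibly belonging to different incident $m$-cells) and to join each pair across $E$ by a smooth thin ``pipe'' constructed inside $B_{\delta_c}(E)\setminus B_{\delta_c}(\mathcal{K}^{m-2})$. The nested ordering of the sheets by the offsets $\eta_i$, combined with a consistent matching rule at each $E$, guarantees that all tubes are pairwise disjoint and disjoint from the sheets, so that the resulting $\Gamma$ is an embedded smooth oriented submanifold of $\mathcal{M}\setminus B_{\delta_c}(\mathcal{K}^{m-2})$ without boundary therein, proving ($d$). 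The bounds ($a$), ($b$), ($c$) follow by choosing the slab widths and tube radii sufficiently small compared to $\varepsilon_c$ and $\delta_c$, while the homology relation $P'\sim P\sim T$ holds because the entire modification is realized by an $(m+1)$-chain of arbitrarily small mass.

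The main obstacle I expect is the coherent global implementation of the tube surgery at $\mathcal{K}^{m-1}$. Three issues must be addressed simultaneously: (i) a combinatorial matching of sheets at each $E$, granted by the cycle condition but requiring careful bookkeeping when several $m$-cells share the same $(m-1)$-face; (ii) the realization of each pair as a smooth embedded tube disjoint from the sheets and from every other tube (handled by the nested choice of the offsets $\eta_i$ and by matching outermost-to-outermost across each $E$, so that tubes stay in disjoint thin slabs); and (iii) the avoidance of conflicts between tubes built at distinct $(m-1)$-cells sharing a common $(m-2)$-face, which is precisely why the neighborhood $B_{\delta_c}(\mathcal{K}^{m-2})$ must be excluded from the smoothing. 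A strict parameter hierarchy ($\delta_c<\delta'_c$ both much smaller than the mesh of $\mathcal{K}$, which in turn is small compared to $\varepsilon_c$, with slab widths and tube radii chosen proportionally) then yields all the quantitative estimates in ($a$)--($c$), and a final rounding at the base of each tube ensures that $\Gamma$ is $C^\infty$ rather than merely piecewise smooth.
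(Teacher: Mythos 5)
Your treatment of the second part (construction of $P'$ from $P$) is essentially the argument in the paper: Step~1 of the paper's proof is your ``sheet replacement'' (the $\beta_F$ copies are pushed into an adjacent $(m+1)$-simplex via a bump function, rather than simply offset normally, so that all copies share the boundary $\partial F$, but the effect is the same); Step~2 is your ``tube surgery'', where the paper writes $\tilde P$ near an $(m-1)$-face $\sigma$ as a sum of half-planes $\sum_\ell \epsilon_\ell\Phi_\sharp\llbracket\Lambda_\ell\rrbracket$ in a normal slab, observes that the cycle condition forces $\#\{\epsilon_\ell=+1\}=\#\{\epsilon_\ell=-1\}$, and then glues opposite pairs via disjoint smooth curves $\gamma_\ell$ scaled to collapse near $\partial\Delta_i$. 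Your three ``obstacles'' (combinatorial matching, nested disjointness, conflicts near $\mathcal{K}^{m-2}$) are precisely what the paper handles by localizing the surgery to disjoint tubular neighborhoods $\mathcal{N}_i$ away from $B_{\delta_c}(\mathcal{K}^{m-2})$, so this part is on track.

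The first part, however, has a genuine gap. You claim that after fixing a fine triangulation $\mathcal{K}$ (or a fine ambient grid), a generic small \emph{rigid} perturbation of $\mathcal{K}$ makes the approximate tangent plane of $T$ nearly parallel to an incident $m$-cell at a.e.\ point, so that the cellwise deformation is nearly isometric. This is false: the slant of the tangent plane relative to the faces is unchanged by a small rigid motion, and the Federer--Fleming deformation of a slanted $m$-plane onto a grid unavoidably increases $m$-area by a factor bounded away from $1$ (for a line at angle $\pi/4$ in the plane grid the factor is $\sqrt 2$ no matter where the grid is). Randomizing the center points of the radial retractions controls the dimensional constant $C$ in $\mathbb{M}(P)\le C\,\mathbb{M}(T)$, but it does not bring $C$ down to $1+\varepsilon_c$. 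To obtain the sharp constant one must first locally \emph{straighten} $T$: using the rectifiable structure, a $C^1$-small diffeomorphism $g$ of $\mathbb R^N$ is applied so that a large fraction of $\|T\|$ is carried onto finitely many $m$-planes, and the grid is then adapted to those planes; this is exactly what Federer's strong approximation theorem \cite[4.2.19]{Federerbook} does, and it is the tool the paper invokes. The paper then has to do additional work you have not addressed: the ambient polyhedral cycle $\bar P=g_\sharp T+\partial S$ lives in $\mathbb R^N$, the nearest-point projection $\mathbf{p}_\sharp\bar P$ onto $\mathcal{M}$ is no longer polyhedral, and Whitehead's uniqueness-of-smooth-triangulations theorem (Proposition~\ref{p:Whitehead}) plus the subdivision lemma (Proposition~\ref{p:nello-scheletro-2}) are needed to realize the approximant as a sum $\sum_F\beta_F\llbracket F\rrbracket$ over the $m$-cells of an actual smooth triangulation $\mathcal{K}$ of $\mathcal{M}$ --- which is an output of the argument rather than something fixed in advance. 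Your plan of fixing $\mathcal{K}$ first and then pushing $T$ onto $\mathcal{K}^m$ cannot produce the $(1+\varepsilon_c)$ bound in ($a_0$) without incorporating a straightening step of Federer's type, and the simplices of a smooth triangulation of a curved $\mathcal{M}$ are not flat, so the intrinsic cellwise deformation also needs justification beyond the Euclidean theorem.
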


\begin{remark}\label{r:m-2corepresentability}
A routine modification of the arguments used to prove Proposition \ref{p:poly_approx_prescribedsing} implies in fact that the cycle $P'$ can be chosen so that its singularities are all {\em contained} in $\mathcal{K}^{m-2}$. While this is a much weaker result than the one achieved by our main theorem of constructing an integral cycle with singularities all contained in $\mathcal{K}^{m-5}$, its proof can however be completed without recurring to any sophisticated topological fact.  
\end{remark}

In the second proposition we are given two $m$-dimensional integral cycles $S$ and $R$ which agree outside of a sufficiently small neighborhood of the $m-2$-dimensional skeleton $\mathcal{K}^{m-2}$. We will then show that:
\begin{itemize}
\item $S$ and $R$ represent the same homology class;
\item There is a smooth deformation $R'$ of $R$ which is close, in terms of mass and in flat norm, to $S$;
\item $R'$ coincides with $R$ outside a slightly larger neighborhood of $\mathcal{K}^{m-2}$.
\end{itemize}

\begin{pro}\label{p:squash}
Let $m$ and $\mathcal{M}$ be as in Assumption \ref{a:1} and let $\mathcal{K}$ be a smooth triangulation of $\mathcal{M}$. Then for every $\varepsilon_d >0$ and every $\eta_d>0$ there exists $\delta_d(\varepsilon_d, \eta_d, \mathcal{K}, \mathcal{M})>0$ with the following property. Suppose $S$ and $R$ are $m$-dimensional integral cycles in $\mathcal{M}$ and that 
\[
S\res\mathcal{M}\setminus B_{\delta_d}\left(\mathcal{K}^{m-2}\right)=R\res\mathcal{M}\setminus B_{\delta_d}\left(\mathcal{K}^{m-2}\right)\, .
\]
Then $S$ and $R$ are homologous and moreover there 
exist an integral cycle $R'$ in their homology class and a diffeomorphism $\Phi$ of $\mathcal{M}$ with the following properties:
\begin{enumerate}\itemsep0.2em
\item $\mathbb{M}(R') \leq (1+\varepsilon_d) \mathbb{M}(S)$;
\item $\mathbb{F}(R'-S)\leq C (\varepsilon_d \mathbb{M} (S)+2\|S\| (B_{\eta_d} (\mathcal{K}^{m-2})))^{\frac{m+1}{m}}$, with $C=C (\mathcal{M})$;
\item $R'\res\mathcal{M}\setminus B_{\eta_d}\left(\mathcal{K}^{m-2}\right)=S\res\mathcal{M}\setminus B_{\eta_d}\left(\mathcal{K}^{m-2}\right)$;
\item $\Phi$ is in the isotopy class of the identity and $R'= \Phi_\sharp R$.
\end{enumerate}
\end{pro}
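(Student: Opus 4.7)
The strategy is to invoke Lemma \ref{l:Phi} with $k=m-2$ to produce a diffeomorphism $\Phi$ of $\mathcal{M}$, isotopic to the identity, supported in $B_{\eta_d}(\mathcal{K}^{m-2})$, and whose differential is strongly contractive in $n+2$ tangent directions throughout $B_{\delta_d}(\mathcal{K}^{m-2})$, and then set $R':=\Phi_\sharp R$. Quantitatively, fix $\varepsilon_a$ so small that $(1+\varepsilon_a)^m\leq 1+\varepsilon_d/2$ and apply Lemma \ref{l:Phi} with $k=m-2$, $\eta_a:=\eta_d$, this $\varepsilon_a$, and an auxiliary parameter $\gamma\in(0,1]$ to be specified later; this yields $\delta_a$ independent of $\gamma$, and we declare $\delta_d:=\delta_a$. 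For each pair $R,S$ satisfying the hypothesis, we will select $\gamma$ depending on $\mathbb{M}(R\res B_{\delta_d})$ and $\mathbb{M}(S)$; this is legitimate precisely because $\delta_d$ has already been fixed without reference to $\gamma$.

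\textbf{Easy properties and homology.} Property (4) is immediate from Lemma \ref{l:Phi}(1). Property (3) follows because $\Phi$, being a diffeomorphism that restricts to the identity on $\partial B_{\eta_d}(\mathcal{K}^{m-2})$, maps $B_{\eta_d}(\mathcal{K}^{m-2})$ onto itself. For the homology statement, note that $Z:=R-S$ is an integral $m$-cycle supported in $B_{\delta_d}(\mathcal{K}^{m-2})$, an open LNR that deformation retracts onto the $(m-2)$-dimensional complex $\mathcal{K}^{m-2}$; hence $\mathbf{H}_m(B_{\delta_d}(\mathcal{K}^{m-2}),\mathbb{Z})=0$, and by the standard identification of integral-current homology with singular homology on LNRs we obtain $Z=\partial W$ for some $W\in\mathbf{I}_{m+1}(\mathcal{M})$. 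At the same time $R'-R=\partial(H_\sharp(R\times\llbracket 0,1\rrbracket))$, where $H$ is the isotopy supplied by Lemma \ref{l:Phi}(1), so $R'\sim R\sim S$.

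\textbf{Mass bound.} By the area formula, the mass of $R'$ decomposes across $\mathcal{M}\setminus B_{\eta_d}$, $B_{\eta_d}\setminus B_{\delta_d}$, and $B_{\delta_d}$. The first region contributes exactly $\|S\|(\mathcal{M}\setminus B_{\eta_d})$ (as $\Phi=\mathrm{id}$ and $R=S$), and the second at most $(1+\varepsilon_a)^m\|S\|(B_{\eta_d}\setminus B_{\delta_d})$ by the Lipschitz bound of Lemma \ref{l:Phi}(2) together with $R=S$ there. The key estimate is for the third region: at $p\in B_{\delta_d}$, Lemma \ref{l:Phi}(3) furnishes an orthonormal frame whose span splits as $V:=\mathrm{span}(e_1,\ldots,e_{m-2})$ (mildly perturbed by $d\Phi$) and $W:=\mathrm{span}(e_{m-1},\ldots,e_{m+n})$ (contracted by factor $\gamma$). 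Any $m$-plane $\tau\subset T_p\mathcal{M}$ satisfies $\dim(\tau\cap W)\geq 2$, so taking an orthonormal basis $f_1,\ldots,f_m$ of $\tau$ with $f_{m-1},f_m\in W$ and bounding the norm of the wedge $d\Phi(f_1)\wedge\cdots\wedge d\Phi(f_m)$ by the product of norms yields $J_m\Phi_p(\tau)\leq (1+\varepsilon_a+\gamma)^{m-2}\gamma^2\leq C_m\gamma^2$. The third region therefore contributes at most $C_m\gamma^2\mathbb{M}(R\res B_{\delta_d})$; choosing $\gamma$ small enough that this is $\leq\varepsilon_d\mathbb{M}(S)/2$ gives (1).

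\textbf{Flat norm.} The cycle $R'-S$ is supported in $B_{\eta_d}(\mathcal{K}^{m-2})$ and null-homologous in $\mathcal{M}$ by the previous step. The same mass decomposition gives $\mathbb{M}(R'\res B_{\eta_d})\leq (1+\varepsilon_d/2)\|S\|(B_{\eta_d})+\varepsilon_d\mathbb{M}(S)/2$, and using $\|S\|(B_{\eta_d})\leq\mathbb{M}(S)$ one obtains $\mathbb{M}(R'-S)\leq \varepsilon_d\mathbb{M}(S)+2\|S\|(B_{\eta_d}(\mathcal{K}^{m-2}))$. The isoperimetric inequality for null-homologous integral $m$-cycles in the compact manifold $\mathcal{M}$ supplies $V\in\mathbf{I}_{m+1}(\mathcal{M})$ with $\partial V=R'-S$ and $\mathbb{M}(V)\leq C(\mathcal{M})\mathbb{M}(R'-S)^{(m+1)/m}$, which yields (2). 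The main technical obstacle is the Jacobian estimate on $B_{\delta_d}$: it is crucial that each $m$-plane carry at least \emph{two} of the $\gamma$-contracted normal directions, delivering a factor $\gamma^2$ strong enough to absorb the a priori uncontrolled $\mathbb{M}(R\res B_{\delta_d})$ once $\gamma$ is chosen small depending on the specific pair $R,S$.
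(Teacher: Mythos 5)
Your proof is correct and takes essentially the same route as the paper: apply Lemma \ref{l:Phi} with $k=m-2$, fix $\delta_d=\delta_a$ (which crucially does not depend on $\gamma$), use the area formula together with the dimension count $\dim(\tau\cap W)\geq 2$ to extract the $\gamma^2$ Jacobian factor on $B_{\delta_d}$ — this is exactly the content of the paper's Lemma \ref{l:squash-2}, which you reprove inline — then choose $\gamma$ depending on $R$ and $S$, and finish with the isoperimetric inequality for the flat-norm bound and a deformation retraction onto $\mathcal{K}^{m-2}$ for the homology claim. The only cosmetic difference is your three-region mass decomposition over $\mathcal{M}\setminus B_{\eta_d}$, $B_{\eta_d}\setminus B_{\delta_d}$, $B_{\delta_d}$, where the paper contents itself with a cruder two-region split; and, like the paper, you should dispense at the outset with the trivial case $S=0$ so that the choice of $\gamma$ is well posed.
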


\begin{remark}
Note that the parameter $\delta_d$ does not depend on $S$ and $R$.
Its dependence on the parameters $\varepsilon_d$ and $\eta_d$ can be computed through our arguments, but since such explicit dependence is irrelevant for our purposes, we will ignore the issue.
\end{remark}

\subsection{Proof of Proposition \ref{p:poly_approx_prescribedsing}: first approximation}

In this section we show the existence of the first approximating cycle $P$ as in Proposition \ref{p:poly_approx_prescribedsing}. It is quite possible that the existence of a $P$ with the desired properties is already proved in the existing literature; nevertheless, we have not been able to find a precise reference for our purposes and therefore we provide a proof. Instrumental to our argument is to consider the ambient manifold $\mathcal{M}$ to be smoothly isometrically embedded in some Euclidean space $\mathbb R^N$ (the codimension is irrelevant), which we can always assume without loss of generality thanks to Nash's Theorem.  

\medskip

Consider now the integral cycle $T$ in $\mathcal{M}$ as an integral cycle of $\mathbb R^N$. By \cite[Lemma 4.2.19]{Federerbook} for every $\kappa>0$ there is a diffeomorphism $g$ and an integral $m+1$-dimensional current $S$ such that 
\begin{itemize}
\item ${\rm Lip}\, (g) \leq 1 +\kappa$ and $|g(x)-x|\leq \kappa$ for all $x$;
\item $\mathbb{M} (S) + \mathbb{M} (\partial S)\leq \kappa$;
\item ${\rm spt}\, (S) \subset \{x: \dist (x, {\rm spt}\, (T)) \leq \kappa\}$;
\item $g_\sharp T + \partial S \in \mathscr{P}_m (\mathbb R^N)$.
\end{itemize}
Note therefore that, if we set $\bar{P}:= g_\sharp T + \partial S$, then $\bar{P}$ is a cycle,
\[
\mathbb{M} (\bar{P}) \leq (1+\kappa) \mathbb{M} (T) + \kappa
\]
and 
\[
{\rm spt}\, (\bar P) \subset \{x: \dist (x, {\rm spt}\, (T)) \leq \kappa\} \subset B_\kappa(\mathcal{M})\, .
\]
We next observe that we can, without loss of generality, regard $\bar{P}$ as 
\[
\bar{P} = \sum_i k_i \llbracket P_i\rrbracket
\]
where each $k_i$ is a positive integer, each $P_i$ is an oriented {\em closed} simplex, and for every pair of distinct $P_i$ and $P_j$, either $P_i\cap P_j=\emptyset$ or $P_i\cap P_j$ is a common lower-dimensional face. This can be seen as follows: fix a representation as $\bar{P} = \sum_j \bar k_j \llbracket \bar P_j\rrbracket$ with $\bar k_j$ positive integers and $\bar P_j$ oriented simplices. Fix a triangulation $\mathcal{T}$ of $\mathbb R^N$ and apply Proposition \ref{p:nello-scheletro} to refine $\mathcal{T}$ to a new triangulation $\mathcal{T}'$ with the property that each $\bar P_j$ is the union of $m$-dimensional simplices in $\mathcal{T}'$ (elements of the $m$-skeleton). The desired conclusion is then immediate. 

Assuming $\kappa$ to be sufficiently small, we can further assume that the orthogonal projection $\mathbf{p}: B_\kappa (\mathcal{M})\to \mathcal{M}$ is smooth, well-defined and with ${\rm Lip}\, (\mathbf{p})$ arbitrarily close to $1$. More precisely, ${\rm Lip}\, (\mathbf{p}) = 1+\bar{\kappa}$ where $\bar{\kappa}\downarrow 0$ as $\kappa\downarrow 0$. 

\medskip

We now consider the cycle $\mathbf{p}_\sharp \bar{P}$. Because of the usual homotopy formula, and because of the estimate above, we can ensure that 
\begin{align*}
\mathbb{M} (\mathbf{p}_\sharp \bar{P}) &\leq (1+ C \bar \kappa) \mathbb{M} (\bar P) \leq (1+C\bar\kappa) (\mathbb{M} (T) + \kappa),\\
\mathbb{F} (T-\mathbf{p}_\sharp \bar{P}) &\leq \mathbb{F} (T-\bar P) + \mathbb{F} (\bar P-\mathbf{p}_\sharp \bar{P}) \leq \kappa + C \bar\kappa \mathbb{M} (\bar P)\\
&\leq \kappa + C \bar \kappa (\mathbb{M} (T) + \kappa),\\
{\rm spt}\, (\mathbf{p}_\sharp \bar{P}) &\subset B_\kappa ({\rm spt} (\bar P)) \subset B_{2\kappa} ({\rm spt}\, (T))\, .
\end{align*}

\medskip

Consider now that that $K:=\spt (\bar P)$ is a polyhedron in the sense of Definition \ref{d:polyhedra} and that $\mathbf{p}: K \to \mathcal{M}$ is a piecewise smooth map in the sense of Definition \ref{d:PL-manifolds}. We next fix a triangulation $\mathcal{T}$ of $\mathcal{M}$, which again we understand as a piecewise smooth homeomorphism $\varphi$ of some PL-submanifold $L\subset \mathbb R^{\bar N}$ onto $\mathcal{M}$, according to Definition \ref{d:PL-manifolds}. We next recall the following proposition about uniqueness of smooth triangulations, which is due to Whitehead, \textit{cfr.} \cite{Whitehead}, and corresponding to \cite[Lect. 5, Theorem 1]{Jacob}:

\begin{pro}\label{p:whitehead}
Consider two piecewise smooth homeomorphisms $f: L \to \mathcal{M}$ and $g: M \to \mathcal{M}$ where $L\subset \mathbb R^{N_1}$ and $M\subset \mathbb R^{N_2}$ are two finite polyhedra. Then for every $\eta>0$ there are two piecewise smooth homeomorphisms $f': L \to \mathcal{M}$ and $g': M\to \mathcal{M}$ which are $\eta$-close in the $C^1$-sense to $f$ and $g$ and such that $f'^{-1}\circ g'$ and $g'^{-1} \circ f'$ are piecewise linear. 
\end{pro}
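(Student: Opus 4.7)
The plan is to reduce the statement to the $C^1$-approximation of the transition homeomorphism $h := f^{-1}\circ g : M \to L$ by a piecewise linear homeomorphism $h'$. Indeed, suppose one can construct such an $h'$ with $\|h - h'\|_{C^1} \leq \eta'$ for an $\eta'$ sufficiently small in terms of $\eta$ and of the piecewise $C^1$-norm of $f$. Then setting $f' := f$ and $g' := f \circ h'$ immediately yields the conclusion: by construction $f'^{-1}\circ g' = h'$ is piecewise linear (and so is its inverse $g'^{-1}\circ f' = (h')^{-1}$), while the chain rule together with the piecewise boundedness of $df$ gives $\|g - g'\|_{C^1} \leq \eta$. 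Thus everything reduces to producing a PL $C^1$-approximation of the piecewise smooth homeomorphism $h$ between polyhedra.

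To produce such an $h'$, I would use Whitehead's classical method of \emph{secant approximation}. After a small $C^1$-perturbation of $g$ (which can be absorbed into the final $g'$) one may first arrange that $h$ is in general position with respect to the simplicial skeleta of $L$, so that the pullback under $h$ of the triangulation of $L$ yields a piecewise smooth cell decomposition of $M$ refining its original triangulation. Iterated barycentric subdivision then gives simplicial refinements of $M$ whose simplices have arbitrarily small diameter and uniformly bounded aspect ratio; on each simplex $\tau$ of such a refinement, $h_{|\tau}$ is smooth and $h(\tau)$ lies in a single closed simplex $\sigma$ of $L$. Define $h'_{|\tau}$ to be the unique affine map that agrees with $h$ on the vertices of $\tau$. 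A standard Taylor expansion argument yields, on each such $\tau$ of diameter $\rho$,
\[
\|h - h'\|_{C^0(\tau)} = O(\rho^2), \qquad \|dh - dh'\|_{C^0(\tau)} = O(\rho),
\]
with implicit constants controlled by the $C^2$-norm of $h$ on $\tau$ and by the aspect ratio. Choosing the subdivision fine enough gives the desired $C^1$-closeness. That $h'$ is a homeomorphism follows by induction on the skeleta of $M$: on each simplex $dh'$ is a small perturbation of the non-degenerate $dh$, hence a local diffeomorphism, and agreement with $h$ on vertices combined with the inductive PL structure on lower-dimensional faces ensures global injectivity.

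The principal obstacle I expect is that the pullback of the triangulation of $L$ under $h$ yields only a piecewise smooth cell complex on $M$, not a PL one, so the secant $h'$ constructed above is initially piecewise linear only relative to a piecewise smooth subdivision of $L$. Two strategies are available: either perform a second round of secant approximation on $L$ to replace the piecewise smooth pullback complex by a genuine PL refinement (absorbing the resulting perturbation into $f'$ rather than keeping $f' = f$), or equivalently run the linearization argument symmetrically on $L$ and $M$, iterating until both sides are PL. Managing this two-sided linearization while preserving the homeomorphism property and simultaneously retaining $C^1$-closeness on both sides is precisely the delicate content of Whitehead's original argument in \cite{Whitehead}, and is also the reason the statement is most efficiently quoted as a black box from \cite{Jacob} rather than unpacked here.
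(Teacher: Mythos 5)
The paper does not give a proof of Proposition \ref{p:whitehead}: it is quoted as a theorem of Whitehead, with the references \cite{Whitehead} and \cite[Lect.~5, Theorem 1]{Jacob} standing in for the proof, and the paper then only records the adapted variant Proposition \ref{p:Whitehead} extracted ``from an inspection of the argument given in \cite{Jacob}.'' So there is no in-paper argument to test your proposal against; what you have written is a sketch of Whitehead's own secant-approximation proof.

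As a sketch, the outline is correct and you rightly flag the delicate spots, but let me adjust where exactly the difficulty sits. Your worry that the secant $h'$ ``is piecewise linear only relative to a piecewise smooth subdivision of $L$'' is a red herring: if $h'$ is affine on each simplex of some linear triangulation of $M$ and is a homeomorphism, then the images $h'(\tau)$ tile $L$ by straight simplices and one checks directly that both $h'$ and $(h')^{-1}$ are piecewise linear in the paper's sense, so the initial reduction $f'=f$, $g'=f\circ h'$ is perfectly clean. What really goes wrong is upstream. For the secant on a simplex $\tau$ to be $C^1$-close to $h$ you need $h$ to be $C^2$ on $\tau$, but $h=f^{-1}\circ g$ is smooth only on the cells of the common refinement of $M$'s triangulation with the pullback under $h$ of $L$'s triangulation, and this refinement is a \emph{curvilinear} decomposition of $M$; iterated barycentric subdivision of $M$'s linear triangulation will not produce simplices lying inside these curved cells, so the ``$O(\rho^2)/O(\rho)$'' Taylor estimate you invoke simply fails on simplices straddling a curved wall. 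Whitehead's argument builds the fine triangulation by moving vertices onto the curved walls while keeping uniform control on fullness (aspect ratio), and that construction is the actual technical core your sketch elides. Similarly, injectivity of $h'$ does not follow merely from ``$dh'$ is a small perturbation of $dh$'' plus induction on skeleta; a quantitative degree-theoretic argument using the uniform fullness is needed. Both items are precisely what \cite{Whitehead} supplies, so your closing decision to treat the proposition as a black box is the right one and matches what the paper itself does.
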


Recall that being $f$ and $f'$ piecewise smooth, for both there are triangulations $\mathcal{T}$ and $\mathcal{T}'$ of $L$ with the property that the restriction of each of the simplices of the corresponding triangulation 
is a smooth function. Closeness in the $C^1$-sense means that $\|f_{|{\Delta\cap \Delta'}} - f'_{|{\Delta \cap \Delta'}}\|_{C^1} \leq \eta$ for every $\Delta \in \mathcal{T}$ and $\Delta'\in \mathcal{T}'$.

An inspection of the argument given in \cite{Jacob} shows that the invertibility of both maps is only used to prove the piecewise linearity of both $f'^{-1}\circ g'$ and $g'^{-1}\circ f'$. Adapted to our setting, the arguments lead to the following conclusion.

\begin{pro}\label{p:Whitehead}
For every $\eta>0$ there is a piecewise smooth homeomorphism $\psi: L \to \mathcal{M}$ and a piecewise smooth map $\mathbf{q}: K \to \mathcal{M}$ such that:
\begin{itemize}
\item[($\alpha$)] $\psi$ and $\mathbf{q}$ are $\eta$-close in the $C^1$-sense to $\varphi$ and $\mathbf{p}$;
\item[($\beta$)] $\Psi := \psi^{-1}\circ \mathbf{q}: K \to L$ is piecewise linear.
\end{itemize}
\end{pro}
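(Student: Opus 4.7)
The proof is a direct adaptation of Whitehead's simplex-wise approximation argument in \cite{Jacob}, exploiting precisely the observation highlighted before the statement: invertibility of an input map is needed only to recover the map on the opposite side from a PL approximation by pre/post-composition. Since in our setting only $\Psi = \psi^{-1}\circ \mathbf{q}$ is required to be PL, only the ``$\psi$-side'' needs to stay a homeomorphism, and we can leave $\varphi$ completely unchanged and construct $\mathbf{q}$ from $\mathbf{p}$ alone. Concretely, I would simply set $\psi := \varphi$ (so ($\alpha$) holds trivially on that side) and then approximate the piecewise smooth map
\[
\Theta := \varphi^{-1}\circ \mathbf{p}:K\to L
\]
in the $C^1$-sense by a piecewise linear map $\Psi: K\to L$, and finally define $\mathbf{q} := \varphi\circ\Psi$.

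The construction of $\Psi$ goes as follows. First I would pass to a common refinement of the given triangulations of $K$ and $L$ that is compatible with $\varphi$, $\mathbf{p}$ and $\Theta$, so that the restriction of each of these maps to every simplex of the corresponding refined triangulation is smooth. Then, by barycentric subdivision of $K$ (possible because $\Theta$ is uniformly continuous on the compact polyhedron $K$), I arrange that for every top-dimensional simplex $\Delta$ of the refined triangulation of $K$, the image $\Theta(\Delta)$ lies inside the open star of some vertex $v=v(\Delta)$ of the triangulation of $L$. Stars are convex in the PL-structure of $L$. I then set $\Psi(w) := \Theta(w)$ on every vertex $w$ of $K$ and extend by barycentric-coordinate linear interpolation on each $\Delta$; because $\Psi(\Delta)$ stays in the convex star $\mathrm{St}(v)\subset L$, the resulting map does land in $L$ and is PL by construction. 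A standard Taylor expansion on each $\Delta$ gives
\[
\|\Psi - \Theta\|_{C^1(\Delta)} \leq C \, \mathrm{diam}(\Delta)\, \|\Theta\|_{C^2(\Delta)},
\]
so further refinement of $K$ makes $\|\Psi-\Theta\|_{C^1}$ as small as desired.

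Finally, setting $\mathbf{q} := \varphi\circ\Psi$, the map $\mathbf{q}$ is piecewise smooth (the composition of a PL map into $L$ with a piecewise smooth map), and by the chain rule
\[
\|\mathbf{q} - \mathbf{p}\|_{C^1} = \|\varphi\circ\Psi - \varphi\circ\Theta\|_{C^1} \leq C\bigl(\|\varphi\|_{C^1} + \|\varphi\|_{C^2}\bigr)\|\Psi-\Theta\|_{C^1},
\]
which can be made at most $\eta$ by choosing the subdivision of $K$ fine enough; this yields ($\alpha$) for $\mathbf{q}$. Conclusion ($\beta$) is immediate since $\psi^{-1}\circ\mathbf{q} = \varphi^{-1}\circ(\varphi\circ\Psi) = \Psi$, which is PL by construction.

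The main obstacle is the piecewise linear approximation step: producing a PL map $\Psi$ whose image actually lies inside $L$ (rather than in an ambient $\mathbb R^{\bar N}$) while being $C^1$-close to $\Theta$ across shared faces of $K$. Both issues are resolved at once by the ``star-of-vertex'' refinement above, which guarantees that all vertex values and their linear combinations stay inside a convex PL neighborhood of $L$, while the Taylor estimate above delivers the $C^1$-control compatibly on all simplices, including at their common faces where the interpolation is automatically consistent because the vertex values agree there. Everything else is a direct transcription of Whitehead's original bookkeeping.
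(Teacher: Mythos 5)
Your shortcut of taking $\psi := \varphi$ and constructing $\mathbf{q}$ by a secant PL-interpolation of $\Theta=\varphi^{-1}\circ\mathbf{p}$ breaks at the crucial step. The assertion that ``stars are convex in the PL-structure of $L$'' is false in the sense you need: $L$ is a PL $(m+n)$-manifold embedded in $\mathbb R^{\bar N}$ (typically with $\bar N > m+n$), so the open star of a vertex $v$ is a union of simplices of different affine spans, and the convex hull of points $\Theta(w_0),\dots,\Theta(w_{m+n})$ lying in different simplices of $\mathrm{St}(v)$ will in general leave $L$ entirely. Thus the map obtained by interpolating the vertex values of $\Theta$ barycentrically on each $\Delta$ does not take values in $L$, and so $\Psi$ is not a well-defined map $K\to L$, let alone a piecewise linear one.

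One might try to repair this by strengthening the refinement so that each $\Theta(\Delta)$ lies in a single closed simplex $\sigma$ of a triangulation of $L$ (a simplex, unlike a star, really is convex in $\mathbb R^{\bar N}$). But this is unachievable by pure subdivision of $K$: the preimage $\Theta^{-1}(\partial\sigma)$ of a codimension-one face is a curved (piecewise smooth, not polyhedral) hypersurface in $K$, and no matter how fine the triangulation of $K$ is, some simplices will straddle it. This is exactly the obstruction that forces Whitehead's argument to perturb the map on the $L$-side as well — in the paper's statement this is why $\varphi$ is replaced by a nearby $\psi$ rather than left fixed. The remark preceding the proposition says only that the \emph{invertibility} of both maps is used solely to get piecewise linearity of both compositions $f'^{-1}\circ g'$ and $g'^{-1}\circ f'$; it does not say that one of the two perturbations can be dropped. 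Your plan amounts to asserting that a single piecewise smooth map $K\to L$ can always be $C^1$-approximated by a PL map through naive vertex interpolation, which is not true without also moving the PL structure on the target; that bookkeeping (the simultaneous perturbation of $\varphi$) is the actual content of the proposition and is exactly the part your proof omits.

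A smaller point: even were the target-convexity issue resolved, iterated barycentric subdivision degrades simplex regularity, so the Taylor estimate $\|\Psi-\Theta\|_{C^1(\Delta)}\leq C\,\mathrm{diam}(\Delta)\,\|\Theta\|_{C^2(\Delta)}$ has a hidden constant depending inversely on the thickness of $\Delta$, and you would need a refinement scheme that keeps simplices uniformly nondegenerate for the $C^1$ bound to go to zero.
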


Point ($\beta$) means that there is a triangulation $\mathcal{T}_1$ of $K$ and a triangulation $\mathcal{T}_2$ of $L$ with the property that every simplex $\Delta$ in the triangulation $\mathcal{T}_1$ is mapped by $\psi^{-1} \circ \mathbf{q}$ inside some simplex $\Delta'$ of $\mathcal{T}_2$ and that the restriction $\Psi|_{\Delta}$ is an affine map. 

We are now ready to declare that our cycle $P$ is in fact given by $\mathbf{q}_\sharp \bar{P}$. It is immediate to see that 
\begin{align*}
\mathbb{M} (P) &\leq (1+ C (\bar \kappa+\eta)) \mathbb{M} (\bar{P}) \leq (1+C(\bar\kappa+\eta)) (\mathbb{M} (T) + \kappa),\\
\mathbb{F} (T-P) &\leq \mathbb{F} (T-\bar P) + \mathbb{F} (\bar P - P) \leq \kappa + C (\bar\kappa + \eta) (\mathbb{M} (\bar P))\\
&\leq \kappa + C (\bar \kappa +\eta) (\mathbb{M} (T) + \kappa),\\
{\rm spt}\, (P) &\subset B_{\kappa+\eta} ({\rm spt} (\bar P)) \subset B_{2\kappa+\eta} ({\rm spt}\, (T))\, .
\end{align*}
In particular, choosing $\kappa, \bar\kappa$ and $\eta$ appropriately, $P$ satisfies the three desired estimates ($a_0$), ($b_0$), and ($c_0$) in Proposition \ref{p:poly_approx_prescribedsing}. 

\medskip

Consider now the finite collection $\mathcal{P}$ simplices $\Gamma_i$ which are images through $\Psi$ of some $m$-dimensional simplex $\Delta_i$ of the triangulation of $K$. Some of these might have dimension strictly smaller than $m$ (which would mean that the affine map $\Psi|_\Delta$ does not have full rank). We then discard them from $\mathcal{P}$. Upon choosing an orientation for the $\Gamma_i$'s, we clearly have that 
\[
P = \sum_i \ell_i \psi_\sharp \llbracket \Gamma_i \rrbracket\, ,
\]
for an appropriate choice of the multiplicities $\ell_i$. 

We now can apply Proposition \ref{p:nello-scheletro-2} and find a triangulation $\mathcal{T}_3$ of $L$ which refines the triangulation $\mathcal{T}_2$ and with the property that each $\Gamma_i$ is the union of some elements in the $m$-dimensional skeleton of $\mathcal{T}_3$. The image through $\psi$ of $\mathcal{T}_3$ gives the desired triangulation $\mathcal{K}$ of $\mathcal{M}$ which satisfies the requirement ($d_0$) in Proposition \ref{p:poly_approx_prescribedsing}.

Finally, observe that there is an integral current $Z$ in $\mathbb R^N$ such that $T-\bar{P} = \partial Z$ and with $\spt (Z) \subset B_{2\kappa} (\mathcal{M})$. In particular $\mathbf{p}_\sharp Z$ provides an integral current in $\mathcal{M}$ such that $\partial \mathbf{p}_\sharp Z = T - \mathbf{p}_\sharp \bar{P}$. Given that $\mathbf{p}$ and $\mathbf{q}$ are close in the Lipschitz norm, there is a Lipschitz homotopy of the two maps which takes values in $B_{2\kappa} (\mathcal{M})$. Composing the latter homotopy with $\mathbf{p}$, we find a Lipschitz homotopy $\Phi$ between the two maps which takes values in $\mathcal{M}$: through the homotopy formula this map provides an integral current $Z'$ in $\mathcal{M}$ such that $\partial Z'= \mathbf{p}_\sharp \bar{P} - \mathbf{q}_\sharp \bar{P}$. In particular we conclude that $P$ is in the same homology class of $\mathbf{p}_\sharp \bar{P}$ and hence in the same homology class of $T$ in $\mathcal{M}$.

\subsection{Proof of Proposition \ref{p:poly_approx_prescribedsing}: second approximation}

Starting with the approximation $P$ and the triangulation $\mathcal{K}$ of the first part of Proposition \ref{p:poly_approx_prescribedsing} we now construct the approximation $P'$ of the second part.
This is done in two steps:

\medskip

{\bf Step 1. Regularization on $\mathcal{M}\setminus \mathcal{K}^{m-1}$.} In this first step we modify $P$ using the following algorithm.

We start by fixing, for the triangulation $\mathcal{K}$, a suitable polyhedral submanifold $K$ (\textit{cfr}. Definition \ref{d:PL-manifolds}) of some Euclidean space $\mathbb R^N$ and a piecewise smooth homeomorphism $\psi: K\to \mathcal{M}$ (\textit{cfr}. again Definition \ref{d:PL-manifolds}) which realizes the triangulation $\mathcal{K}$ in the sense that, for some suitable triangulation $\mathcal{T}$ of the polyhedron $K$, the following holds: for every cell $F$ of $\mathcal{K}$, its diffeomorphic preimage $\psi^{-1} (F)$ is a simplex of $\mathcal{T}$. The current $P$ is then given 
\[
P = \sum_{F\in \mathcal{F}^m} \beta_F \llbracket F \rrbracket\, ,
\]
where $\beta_F\in \mathbb Z$ and $\mathcal{F}^m$ is the collection of $m$-dimensional cells of $\mathcal{K}$. Without loss of generality we can assume that $\beta_F\geq 0$. For every cell $F$ with $\beta_F>0$ we consider $\Delta:= \psi^{-1} (F)$ and we let $\Gamma$ be an $(m+1)$-dimensional simplex of the triangulation $\mathcal{T}$ which contains $\Delta$. We will replace $\beta_F \llbracket F \rrbracket$ with $\sum_{j=1}^N \llbracket \psi (\Delta_j)\rrbracket$, where the $\Delta_j\subset \Gamma$ are diffeomorphic images of $\Delta \subset \Gamma$ with $\partial \Delta_j=\partial \Delta$ and $\Delta_{j'}\cap \Delta_{j''}=\partial \Delta$ for every $j'\neq j''$. In order to define the $\Delta_j$ we will use the following elementary lemma.

\begin{lem}\label{l:costruisco_funzione}
Consider the $m$-dimensional simplex $\Omega\subset \mathbb R^{m}$ which is the convex hull of $\{e_0, e_1, \ldots, e_m\}$, where $e_0=0$ and $e_1, \ldots, e_m$ is the standard basis. For each $i\in \{0, 1, \ldots, m\}$ let $F_i$ be the relative interior of the $m-1$-dimensional face of $\Omega$ spanned by $e_0, \ldots, e_{i-1}, e_{i+1}, \ldots, e_m$. In other words, $F_i$ consists of those points $p$ which can be written as convex combinations $\sum_j \lambda_j e_j$ with $\lambda_i=0$ and $\lambda_j>0$ for every $j\neq i$.

Then there is a Lipschitz function $f:\Omega \to \mathbb R$ and a neighborhood $V$ of $\bigcup_i F_i$ such that
\begin{itemize}
\item[(a)] $f$ is positive and smooth in the interior of $\Omega$;
\item[(b)] $f (x) = \dist (x, \partial \Omega)$ for every $x\in V$;
\item[(c)] ${\rm Lip}\, (f) \leq C$ for some constant $C=C(m)$.
\end{itemize}
\end{lem}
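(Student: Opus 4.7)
The plan is to build $f$ as a partition-of-unity gluing of the $m{+}1$ distance-to-hyperplane functions associated with the facets of $\Omega$. For each $i\in\{0,\ldots,m\}$, let $H_i$ denote the affine hyperplane supporting $\bar F_i$ and set $d_i(x):=\dist(x,H_i)$; since $\Omega$ lies on a single side of $H_i$, each $d_i$ is affine, $1$-Lipschitz, and strictly positive on $\mathrm{int}(\Omega)$. The key geometric input is that, for every point $x$ in a suitable relatively open neighborhood $V_i$ of the relative interior $F_i$ (necessarily pinching to zero thickness near $\partial F_i$), the orthogonal foot of $x$ on $H_i$ lies in $\bar F_i$ and $d_i$ is the \emph{strict} minimum of $d_0,\ldots,d_m$, so that $\dist(x,\partial\Omega)=d_i(x)$; strictness of the minimum forces $V_i\cap V_j=\emptyset$ whenever $i\ne j$.

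Next, using smooth one-variable cutoffs applied to smoothed ratios of the form $d_i/\tilde m_i$, with $\tilde m_i$ an $\ell^s$-type smooth approximation of $\min_{j\ne i}d_j$ on $\mathrm{int}(\Omega)$, one produces smooth functions $\psi_i:\mathrm{int}(\Omega)\to[0,1]$ such that $\psi_i\equiv 1$ on an open neighborhood $V'_i\subset V_i$ of $F_i$ in $\Omega$, at most one $\psi_i$ is nonzero at any point (so $\sum_i\psi_i\le 1$), and the products $|\nabla\psi_i|\,d_i$ are uniformly bounded by a constant depending only on $m$. Choose a smooth positive function $g$ on $\mathrm{int}(\Omega)$ that extends continuously to $\partial\Omega$ with value $0$ and is Lipschitz with constant $C(m)$ --- e.g.\ the harmonic mean $g:=(m{+}1)/\sum_jd_j^{-1}$, which moreover satisfies $g\le(m{+}1)\min_jd_j$. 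Define
\[
f(x)\;:=\;\sum_{i=0}^m\psi_i(x)\,d_i(x)\;+\;\Bigl(1-\sum_{i=0}^m\psi_i(x)\Bigr)\,g(x),\qquad V:=\bigcup_{i=0}^m V'_i.
\]

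Smoothness and positivity of $f$ on $\mathrm{int}(\Omega)$ are immediate from those of each ingredient together with the nonnegativity of the coefficients summing to $1$; continuity up to $\partial\Omega$ (with $f\equiv 0$ there) follows from $\psi_i d_i\to 0$ as $d_i\to 0$ and $g\to 0$ on $\partial\Omega$, giving (a). At every $x\in V'_i$, disjointness of the supports of the $\psi_j$ forces $\psi_j(x)=0$ for $j\ne i$ and $\psi_i(x)=1$, hence $f(x)=d_i(x)=\dist(x,\partial\Omega)$, giving (b). For (c), the pointwise gradient estimate
\[
|\nabla f|\le\sum_i\bigl(|\nabla\psi_i|\,d_i+\psi_i\,|\nabla d_i|\bigr)+\Bigl|\nabla\!\sum_i\psi_i\Bigr|\,g+\Bigl(1-\sum_i\psi_i\Bigr)\,|\nabla g|,
\]
combined with $|\nabla d_i|\le 1$, $|\nabla g|\le C(m)$, and the two key bounds $|\nabla\psi_i|\,d_i\le C(m)$ and $|\nabla\psi_i|\,g\le C(m)$ (the latter using $g\le(m{+}1)\tilde m_i$ on $\spt\psi_i$), yields $\Lip(f)\le C(m)$. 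The main technical obstacle in the whole argument is the construction of the neighborhoods $V_i$: they must be open neighborhoods of the \emph{full} relative interiors $F_i$ yet pairwise disjoint, and hence must pinch to zero thickness near the codimension-$2$ skeleton $\partial F_i$. This pinching is unavoidable, because on that codim-$2$ skeleton several $d_j$ vanish simultaneously and $\dist(\cdot,\partial\Omega)$ genuinely fails to be smooth --- which is precisely why the lemma only requires $V$ to be a neighborhood of $\bigcup_iF_i$ rather than of all of $\partial\Omega$.
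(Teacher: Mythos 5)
Your proof is correct but proceeds by a genuinely different construction than the paper's. The paper's argument is a Whitney-type smoothing of the distance function: it partitions $\mathrm{int}(\Omega)$ into dyadic shells $V_k=\{2^{-k-2}<\dist(\cdot,\partial\Omega)<2^{-k}\}$, takes a subordinate partition of unity $\{\varphi_k\}$ with $\|\nabla\varphi_k\|\le C2^k$, and sets $f=\sum_k\varphi_k\,\bigl(\dist(\cdot,\partial\Omega)*\psi_{c_02^{-k}}\bigr)$, i.e.\ it mollifies the distance function at scale proportional to its own value. Property (b) is then deduced from the observation that near $F_i$ but away from the codimension-$2$ skeleton, $\dist(\cdot,\partial\Omega)$ agrees with the affine supporting function $A_i$, and affine functions pass through convolution unchanged, so the mollified function equals $A_i=\dist(\cdot,\partial\Omega)$ there. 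You instead build $f$ by an explicit facet-by-facet gluing: pairwise-disjoint cutoffs $\psi_i$ localized near each open facet $F_i$ (via ratio conditions on the affine functions $d_i$ against a smooth $\ell^s$-proxy for $\min_{j\ne i}d_j$), combined with the harmonic-mean filler $g=(m{+}1)/\sum_j d_j^{-1}$ for the complementary region. Both approaches hinge on the same core geometric fact --- that $\dist(\cdot,\partial\Omega)$ is affine (namely equals $d_i$) near $F_i$ but away from the codimension-$2$ skeleton --- but the paper exploits it through the translation-invariance of convolution while you exploit it through an explicit partition of unity adapted to the facets. The paper's variable-scale mollification is shorter to state and generalizes trivially to non-polyhedral domains; yours is more elementary (no convolution, explicit formulas throughout) and makes the source of each quantitative constant transparent, at the cost of the careful disjointness/support bookkeeping you flag at the end. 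One small remark: for the standard simplex the orthogonal foot of any $x\in\Omega$ on $H_i$ actually always lies in $\bar F_i$, so the "strict minimum" criterion, not the foot-in-facet condition, is what forces the $V_i$ to pinch near $\partial F_i$; but this does not affect the correctness of your argument.
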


With Lemma \ref{l:costruisco_funzione} at hand, we are ready to define $\Delta_j$. First of all let $\{v_0, v_1, \ldots, v_m\}$ be the extremal points of $\Delta$, $\pi$ the $m$-dimensional linear space spanned by $\{v_1-v_0, \ldots , v_m-v_0\}$ and then let $v_{m+1}$ be the only unit vector orthogonal to $\pi$ with the property that $\frac{1}{2} \sum_i v_i + \gamma v_{m+1} \in \Gamma$ for every $\gamma$ sufficiently small. Let $A: \Delta \to \Omega$ be the affine map defined by $A (v_i)=e_i$. Choose then positive numbers $0<\delta_1 < \delta_2 < \ldots < \delta_{\beta_F}$ and define $\Delta_j$ as 
\[
\Delta_j = \{x+ \delta_j\, f(A(x)) v_{m+1} : x\in \Delta\}\, .
\]
Provided the $\delta_{\beta_F}$ is chosen sufficiently small, each $\Delta_j$ is contained in $\Gamma$.
Note moreover that, by construction, $\psi (\Delta_{j'})\cap \psi (\Delta_{j''})\subset \mathcal{K}^{m-1}$, the $(m-1)$-dimensional skeleton of $\mathcal{K}$.

We perform the above construction for all $F$'s with $\beta_F>0$. Upon enumerating them as $\{F^i\}$, we denote by $\Delta^i_j$ the corresponding $m$-dimensional cells in the polyhedron $K$, by $\delta^i$ the number $\delta_{\beta_{F_i}}$ and by $F^i_j$ their images through $\psi$. We note further that, choosing the $\delta_{\beta_{F_i}}$ sufficiently small, we can ensure that $F^i_j \cap F^{i'}_{j'} \subset \mathcal{K}^{m-1}$ for every choice of distinct pairs $(i,j)$ and $(i',j')$.

Consider now the integral current
\[
\tilde{P} := \sum_i \sum_j \llbracket F^i_j\rrbracket\, .
\]
Clearly $\partial (\sum_j \llbracket F^i_j\rrbracket) -\partial (\beta^i \llbracket F_i\rrbracket)=0$ and so $\sum_j \llbracket F^i_j\rrbracket - \beta^i \llbracket F_i \rrbracket$ is a cycle $T_i$. Moreover we can use the homotopy formula to ensure that $\mathbb{M} (T_i)$ is as small as needed provided $\delta^i$ is chosen comparably small. We can also ensure that $\mathcal{H}^m (F^i_j)$ is as close as needed to $\mathcal{H}^m (F)$ using the area formula. In particular we can conclude that, upon suitably choosing the $\delta^i$'s, 
\begin{itemize}
\item $\tilde{P}$ is in the same homology class as $P$ (in $\mathcal{M}$);
\item $\spt (\tilde{P}) \subset B_{2\varepsilon_c} (\spt (T))$;
\item $\mathbb{M} (\tilde{P}) \leq (1+2\varepsilon_c) \mathbb{M} (T)$;
\item $\mathbb{F} (T-\tilde{P}) \leq 2\varepsilon_c$;
\item $\spt (\tilde{P}) \setminus \mathcal{K}^{m-1}$ is smooth and is taken with multiplicity $1$ by $\tilde{P}$, more precisely:
\begin{itemize}
\item[(S)] for every $p\in \spt (\tilde{P}) \setminus \mathcal{K}^{m-1}$ there is a neighborhood $U$ of $p$ and a smooth oriented $m$-dimensional submanifold $\Lambda$ of $\mathcal{M} \cap U$ with boundary contained in $\mathcal{M} \cap \partial U$ and such that 
$\tilde{P} \res U = \llbracket \Lambda \rrbracket$. 
\end{itemize}
\end{itemize}
In the next (and final) step of the proof of Proposition \ref{p:poly_approx_prescribedsing} we will perturb $\tilde{P}$ by removing its $m-1$-dimensional singularities away from a small neighborhood of $\mathcal{K}^{m-2}$. But before coming to that, we provide the elementary proof of Lemma \ref{l:costruisco_funzione}.

\begin{proof}[Proof of Lemma \ref{l:costruisco_funzione}]
We denote by $\Omega^{m-2}$ the $m-2$-dimensional skeleton of $\partial \Omega$, namely the set of points which are convex combinations $\sum_i \lambda_i e_i$ where at least two among the coefficients $\lambda_i$ vanish, while we denote by $\pi_j$ the affine $m-1$-dimensional space which is formed by linear combinations $\sum_i \lambda_i e_i$ with $\lambda_j=0$. Moreover we denote by $A_j$ the affine function which vanishes on $\pi_j$ and coincides with $\dist (p, A_j)$ on the halfspace containing $\Omega$.

We then observe that there is a (sufficiently small) positive constant $\varepsilon_0$ and a (sufficiently large) positive constant $C_0$, both depending on the dimension $m$, such that the following holds.
\begin{itemize}
\item[(L)] On the open set $\{p\in \Omega: \dist (p, \pi_i)< \varepsilon_0\; \mbox{and} \; \dist (p, \pi_i) < C_0^{-1} \dist (p, \Omega^{m-2}) \}$ the function $\dist (p, \partial \Omega)$ coincides with the affine function $A_i$.  
\end{itemize}
We now let $V_k := \{p\in \Omega: 2^{-k-2} < \dist (p, \partial \Omega) < 2^{-k}\}$ for $k\geq 1$, while $V_0 := \{p\in \Omega : \dist (p, \partial \Omega) > \frac{1}{2}\}$ and we consider a partition of unity $\varphi_k$ subordinate to it with the property that $\|\nabla \varphi_k\|_{C^0} \leq C 2^k$. Finally, we let $\psi \in C^\infty_c (B_1)$ be a nonnegative mollifier with $\int \psi = 1$. 

The function $f: \Omega \to \mathbb R$ is then defined as 
\[
f := \sum_k \varphi_k \dist (\cdot, \partial \Omega) * \psi_{c_0 2^{-k}}
\]
for a sufficiently small constant $c_0$. Using (L) and the property that $A_i * \psi_\lambda = A_i$ for every choice of $\lambda$, we see immediately that $f$ coincides with $\dist (\cdot, \partial \Omega)$ in a neighborhood of $\bigcup_i F_i$. The positivity and smoothness of $f$ in $\Omega$ is in turn obvious. Finally, we can compute
\[
\nabla f = \sum_k \nabla \varphi_k \dist (\cdot, \partial \Omega) * \psi_{c_0 2^{-k}}
+ \sum_k \varphi_k \nabla (\dist (\cdot, \partial \Omega)*\psi_{c_0 2^{-k}}).
\]
The second summand is bounded by 
\[
\sum_k \varphi_k =1
\]
because the distance function is 1-Lipschitz. As for the first summand, given that $\sum_k \nabla \varphi_k = 0$, it equals  
\[
\sum_k \nabla \varphi_k (\dist (\cdot, \partial \Omega) * \psi_{c_0 2^{-k}} - \dist (\cdot, \partial \Omega))\,.
\]
For every fixed $p\in \Omega$, there is a $j$ such that $p$ is not in the support of $\varphi_k$ for $k > j+2$ and $k<j$. We can thus write 
\begin{align*}
& \left|\sum_k \nabla \varphi_k (p) (\dist (p, \partial \Omega) * \psi_{c_0 2^{-k}} - \dist (p, \partial \Omega))\right|\\
\leq & C 2^{\,j+2} \sum_{k=j}^{j+2} |\dist (p, \partial \Omega) * \psi_{c_0 2^{-k}} - \dist (p, \partial \Omega))| \leq C\, ,
\end{align*}
where we have used that $|\dist (p, \partial \Omega) * \psi_\lambda - \dist (p, \partial \Omega)| \leq \lambda$ for every $p$ with $\dist (p, \partial \Omega) > \lambda$.
\end{proof}

\medskip

{\bf Step 2. Removing the $(m-1)$-dimensional singularities.} Next consider an arbitrary face $F^k$ as in the previous subsection and let $\sigma_i$ be an arbitrary $(m-1)$-dimensional face of $F^k$. Fix a $\delta_c>0$: the goal is to modify $\tilde{P}$ in a neighborhood of $\sigma\setminus B_{\delta_c} (\mathcal{K}^{m-2})$ to a new current $P'$ in the same homology class, close to it in terms of mass, support and flat norm, and with the property that $P'$ is smooth in that neighborhood. The neighborhood in which we will perturb $\tilde{P}$ is of the form $B_\lambda (\sigma)\setminus B_{\delta_c} (\mathcal{K}^{m-2})$. First of all, given the structure of $\tilde{P}$ obtained in the previous subsection, if $\lambda$ is sufficiently small, there is an open subset $\Delta_i\subset \mathbb R^{m-1}$ and a smooth parametrization 
\[
\Phi : \Delta_i \times B_{\lambda_i}^{n+1} \to \mathcal{M}
\]
of a normal neighborhood $\mathcal{N}_i$ of $\sigma \setminus B_{\delta_c} (\mathcal{K}^{m-2})$ with thickness $\lambda_i$ and with the property that $\spt (\tilde{P}\setminus \mathcal{N}_i)$ can be described in the following way. There are a finite number of distinct unit vectors $w_1, \ldots , w_L \in \partial B_1^{n+1} \subset \mathbb R^{n+1}$ such that, if we let 
\[
\Lambda_\ell = \{(\sigma, s w_\ell) : \sigma \in \Delta_i, 0 < s < \lambda_i\}\, ,
\]
then $\tilde{P} \res \mathcal{N}_i = \sum_{\ell =1}^L \epsilon_\ell \Phi_\sharp \llbracket \Lambda_\ell \rrbracket$, where $\epsilon_\ell$ takes values in $\{-1,1\}$. Given that $\tilde{P}$ has no boundary in $\mathcal{N}_i$, we conclude that $L$ must be an even number $2\bar{L}$ and that exactly $\bar{L}$ among the numbers $\epsilon_\ell$ equal $1$, while the remaining equal $-1$. We can thus write 
\[
\tilde{P} \res \mathcal{N}_i = \sum_{\ell =1}^{\bar L} \Phi_\sharp \llbracket \Lambda_\ell \rrbracket
- \sum_{\ell = \bar L+1}^{2 \bar L} \Phi_\sharp \llbracket \Lambda_\ell \rrbracket\, .
\]
Consider now the oriented halflines $H_\ell = \{\lambda w_\ell : \lambda >0\}$ in $\mathbb R^{n+1}$. Upon reordering them, we can find disjoint smooth oriented curves $\gamma_\ell$ for $\ell \in \{1, \ldots , \bar L\}$ in $\mathbb R^{n+1}$ with the property that $\llbracket \gamma_\ell \rrbracket \res \mathbb R^{n+1} \setminus B_1= (\llbracket H_\ell \rrbracket  - \llbracket H_{\bar L + \ell}\rrbracket)\res \mathbb R^{n+1}\setminus B_1$. Furthermore we let $\tau_t: \mathbb R^{n+1} \to \mathbb R^{n+1}$ be the homothety $y\mapsto t y$ and denote by $\gamma_{\ell, t}$ the curve $\tau_t (H_\ell)$. We are now ready to define a replacement for $\tilde{P}\res \mathcal{N}_i$. We fix a smooth compactly supported function $\psi_i$ in $\mathbb R^{n+1}$ which is positive on $\Delta_i$ and vanishes on $\partial \Delta_i$, a small positive number $\kappa_i$, and we define 
\[
\Sigma^i := \left\{(x,y) : x\in \Delta_i, y \in \bigcup_{\ell} \gamma_{\ell, \kappa_i \psi_i (x)}\right\} \cap \Delta_i \times B^{n+1}_{\lambda_i}\, .
\]
Choosing $\kappa_i$ sufficiently small we can ensure that the current
\[
P^{i} := \Phi_\sharp \Sigma^i 
\]
satisfies $\partial P^i = \partial (\tilde{P}\res \mathcal{N}_i)$. Moreover we can make $\mathbb{F} (P^i - \tilde{P}\res \mathcal{N}_i)$ and $\mathbb M (P^i) - \mathbb M (\tilde{P}\res \mathcal{N}_i)$ smaller than any desired threshold by choosing $\kappa_i$ sufficiently small. Note finally that, clearly, $\Sigma_i$ is smooth in $\mathcal{N}_i$. 

We next enumerate all the $m-1$-dimensional faces $\sigma_i$ of all the $m$-dimensional faces $F^k$ as $\sigma_1, \sigma_2, \ldots , \sigma_N$. We choose our parameters in such a way that the sets $\mathcal{N}_i$ are pairwise disjoint. Our final current $P'$ will then be defined to be 
\[
P' := \sum_i P^i + \tilde{P} \res \mathcal{M} \setminus \bigcup_i \mathcal{N}_i\, .
\]
$P'\res \mathcal{M}\setminus B_{\delta_c}(\mathcal{K}^{m-2})$ is then smooth by construction and, choosing the parameters accordingly, $P'$ is homologous to $\tilde P$ and we achieve the desired estimates since we can make $\spt (P')$ arbitrarily close to $\spt (\tilde{P})$, $\mathbb{M} (P')$ arbitrarily close to $\mathbb{M} (\tilde{P})$, and $\mathbb{F} (P'-\tilde{P})$ arbitrarily small.

Finally, coming to the estimate on $\|P'\| (B_{\delta_c'} (\mathcal{K}^{m-2}))$ observe that we know:
\begin{align*}
&\|P'\| (\mathcal{M})\leq \|P\| (\mathcal{M}) + 2 \varepsilon_c\\
&\|P'\| (\mathcal{M}\setminus B_{\delta_c'} (\mathcal{K}^{m-2}))\geq\|P\| (\mathcal{M}\setminus B_{\delta_c'} (\mathcal{K}^{m-2}))\, .
\end{align*}
Hence
\begin{align*}
\|P'\| (B_{\delta'_c}(\mathcal{K}^{m-2})) &= \|P'\| (\mathcal{M}) - \|P'\| ( \mathcal{M}\setminus B_{\delta_c'} (\mathcal{K}^{m-2}))\\
&\leq \|P\| (\mathcal{M}) + 2\varepsilon_c - \|P\| (\mathcal{M}\setminus B_{\delta_c'} (\mathcal{K}^{m-2})) 
= \|P\| (B_{\delta_c'} (\mathcal{K}^{m-2})) + 2\varepsilon_c 
\end{align*}
for every $\delta_c'>\delta_c$. Hence $\delta_c'$ must be chosen small enough just to ensure that $\|P\| (B_{\delta_c'} (\mathcal{K}^{m-2})) \leq \varepsilon_c$.

\subsection{Proof of Proposition \ref{p:squash}} First of all we observe the following consequence of the area formula. 

\begin{lem}\label{l:squash-2}
Assume $\Phi$, $\gamma$, and $\varepsilon_a$ are as in Lemma \ref{l:Phi}. If $Z$ is any integer rectifiable current of dimension $m>k$, then
\begin{equation}\label{e:est-mass}
\mathbb{M} (\Phi_\sharp (Z\res B_{\delta_d} (\mathcal{K}^k)))\leq C (1+ \varepsilon_a)^k \gamma^{m-k} \|Z\| (B_{\delta_d} (\mathcal{K}^k))\, ,   
\end{equation}
where $C$ is a dimensional constant which depends only on $m$ and $n$. 
\end{lem}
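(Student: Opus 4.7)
My plan is to combine the standard area inequality for Lipschitz push-forwards of rectifiable currents with a pointwise upper bound on the $m$-Jacobian of $\Phi$, the latter being a direct consequence of the anisotropic estimate in Lemma \ref{l:Phi}(3). The heart of the argument is the Jacobian bound; the rest is routine.

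First I would invoke the mass inequality for Lipschitz push-forwards: writing $Z = \llbracket E, \tau, \theta \rrbracket$, one has
\[
\mathbb{M}\bigl(\Phi_\sharp(Z \res B_{\delta_d}(\mathcal{K}^k))\bigr) \leq \int_{E \cap B_{\delta_d}(\mathcal{K}^k)} J^Z_m\Phi(y)\, |\theta(y)|\, d\mathcal{H}^m(y),
\]
where $J^Z_m\Phi(y)$ denotes the $m$-Jacobian of $\Phi$ restricted to the approximate tangent plane $T_y Z$. Since $\int |\theta|\, d\mathcal{H}^m = \|Z\|(B_{\delta_d}(\mathcal{K}^k))$, it suffices to prove a uniform pointwise bound $J^V_m\Phi(p) \leq C(m,n)(1+\varepsilon_a)^k \gamma^{m-k}$ for every $p \in B_{\delta_d}(\mathcal{K}^k)$ and every $m$-dimensional subspace $V \subset T_p\mathcal{M}$. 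I would assume $\delta_d \leq \delta_a$, which is a free choice ensuring that Lemma \ref{l:Phi}(3) is available at every such $p$.

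To produce this pointwise bound, fix $p$ and let $e_1,\ldots,e_{m+n}$ be the orthonormal frame of Lemma \ref{l:Phi}(3). Pick an orthonormal basis $v_1,\ldots,v_m$ of $V$ and expand $v_i = \sum_j a_{ij} e_j$, so that the $m \times (m+n)$ matrix $(a_{ij})$ has orthonormal rows. The Cauchy--Binet expansion of the wedge product gives
\[
d\Phi_p(v_1) \wedge \cdots \wedge d\Phi_p(v_m) = \sum_{|J|=m} \det(A_J)\, d\Phi_p(e_{j_1}) \wedge \cdots \wedge d\Phi_p(e_{j_m}),
\]
with $J=\{j_1<\cdots<j_m\}$ ranging over $m$-subsets of $\{1,\ldots,m+n\}$ and $A_J$ the corresponding $m\times m$ submatrix of $(a_{ij})$. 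Hadamard's inequality gives $|\det A_J| \leq 1$, and the elementary estimate
\[
|d\Phi_p(e_{j_1}) \wedge \cdots \wedge d\Phi_p(e_{j_m})| \leq \prod_{l=1}^m |d\Phi_p(e_{j_l})|
\]
combined with Lemma \ref{l:Phi}(3) bounds each factor by $1+\varepsilon_a$ when $j_l \leq k$ and by $\gamma$ when $j_l > k$. Since $|J|=m > k$ forces $|J \cap \{1,\ldots,k\}| \leq k$ (so at least $m-k$ of the factors come from the ``small'' block), and since $\gamma \in \,]0,1]\subset [0, 1+\varepsilon_a]$, every such product is at most $(1+\varepsilon_a)^k \gamma^{m-k}$. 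Summing over the $\binom{m+n}{m}$ multi-indices $J$ yields the pointwise bound with $C = \binom{m+n}{m}$, and integrating against $d\|Z\|$ completes the proof.

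I do not foresee a serious obstacle; every ingredient is elementary. The only small subtlety is the multi-index bookkeeping above, namely the pigeonhole observation that any $m$-subset $J$ of $\{1,\ldots,m+n\}$ with $m>k$ must contain at least $m-k$ indices exceeding $k$, which is exactly what produces the $\gamma^{m-k}$ gain.
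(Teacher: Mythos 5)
Your proof is correct. It shares the same skeleton as the paper's argument: both start from the area formula
\[
\mathbb{M}\bigl(\Phi_\sharp(Z\res B_{\delta_d}(\mathcal{K}^k))\bigr)=\int_{B_{\delta_d}(\mathcal{K}^k)}\bigl|d\Phi_p(\vec{Z}(p))\bigr|\,d\|Z\|(p)
\]
and reduce the lemma to a pointwise bound on the $m$-Jacobian of $\Phi$ restricted to an $m$-plane $V\subset T_p\mathcal{M}$, exploiting the anisotropic estimate of Lemma \ref{l:Phi}(3). Where the two diverge is the linear algebra used to extract the $\gamma^{m-k}$ gain. The paper takes the geometric route: projecting $W=\mathrm{span}(e_1,\dots,e_k)$ onto $V$, noting its orthogonal complement $W''\subset V$ has dimension at least $m-k$ and lies in $\mathrm{span}(e_{k+1},\dots,e_{m+n})$, and then picking an orthonormal basis of $V$ adapted to $W''$ before applying Hadamard's inequality once. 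You instead leave the basis of $V$ arbitrary, expand via Cauchy--Binet in the frame $\{e_j\}$, bound the minors by Hadamard (orthonormal rows), and use pigeonhole on the multi-indices $J$: since $|J|=m>k$, at least $m-k$ indices land in the contracted block. Both arguments are elementary and yield dimensional constants (yours is the cleaner $\binom{m+n}{m}$); the paper's is a single dimension count, yours trades that for a combinatorial sum. Your observation that $\gamma\le 1\le 1+\varepsilon_a$ is needed to promote $(1+\varepsilon_a)^{|J\cap\{1,\dots,k\}|}\gamma^{|J\cap\{k+1,\dots,m+n\}|}$ to the worst case $(1+\varepsilon_a)^k\gamma^{m-k}$ is correct and is the small point one must not skip.
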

\begin{proof}
Using the area formula, we have 
\begin{align}
\mathbb{M} (\Phi_\sharp (Z\res B_{\delta_d}(\mathcal{K}^k))) =
\int_{B_{\delta_d} (\mathcal{K}^k)} |d\Phi_p (\vec{Z} (p))|\, d\|Z\| (p)\, ,
\end{align}
where $\vec{Z} (p)$ is a unit simple $m$-vector orienting $Z$ at $p$. We can write $v=\pm v_1\wedge \ldots \wedge v_m$ for any orthonormal base of the approximate tangent space $V$ to $Z$ at $p$ and estimate 
\[
|d\Phi_p (\vec{Z} (p))| \leq \Pi_{i=1}^m |D\Phi_p (e_i)|\, .
\]
Now consider the space $W$ spanned by $e_1, \ldots , e_k$ and let $\mathbf{p}_V (W)$ be its orthogonal projection onto $V$. Clearly the dimension of $W' := \mathbf{p}_V (W)$ is at most $k$ and hence its orthogonal complement $W''$ in $V$ has dimension at least $m-k$. We can choose an orthonormal base of $V$ by completing an orthonormal base of $W''$. On the other hand any element of $W''$ is in the span of $e_{k+1}, \ldots, e_m$. In particular, we conclude that $|D\Phi_p (w'')|\leq \sqrt{m-k} \,\gamma |w''|$ for any vector $w''\in W''$. On the other hand the estimate $|D\Phi_p (v)|\leq \sqrt{m+n} \,(1+ \varepsilon_a) |v|$
holds for any vector $v\in T_p \mathcal{M}$, thus completing the proof of the estimate. 
\end{proof}

\begin{proof}[Proof of Proposition \ref{p:squash}]
We can assume, without loss of generality, that the homology class of $S$ is nontrivial, so that $\mathbb{M} (S)>0$.
The conclusion that $R$ and $S$ are homologous follows from the fact that they coincide outside $B_{\delta_d} (\mathcal{K}^{m-2})$. In particular $\spt (S-R)\subset B_{\delta_d} (\mathcal{K}^{m-2})$: since for $\delta_d$ smaller than a constant $c (\mathcal{K})$ the latter deformation retracts onto $\mathcal{K}^{m-2}$, whose $m$-dimensional homology is trivial, it follows that $S-R$ is homologically trivial. 

We now let $\varepsilon_d$ and $\eta_d$ be given as in the statements. We further fix $\bar{\varepsilon}_d$, whose choice will be specified later (but which will depend only on $\varepsilon_d$), and apply Lemma \ref{l:Phi} with $\varepsilon_a=\bar\varepsilon_d$ and $\eta_a=\eta_d$. We therefore get the parameter $\delta_a=:\delta_d$ (which will be the one of the conclusion of the proposition) and, after fixing yet another $\gamma$ (whose choice will now be dependent on $R$), we get the map $\Phi$ satisfying the requirements of Lemma \ref{l:squash-2}. The requirements (3) and (4) of Proposition \ref{p:squash} are then satisfied by construction.  
Moreover estimate (2) follows from the isoperimetric inequality and from (1) and (3). Indeed there is an integral current $T$ such that $\partial T = S-R'$ and 
\[
\mathbb{M} (T) \leq C \left(\mathbb{M} (S-R')\right)^{\frac{m+1}{m}}
\]
with $C= C (\mathcal{M})$. Using (1) and (3) we then estimate
\begin{align}
\mathbb{M} (S-R') &= \|S-R'\| (B_{\eta_d} (\mathcal{K}^{m-2})) \leq \|S\| (B_{\eta_d} (\mathcal{K}^{m-2}))
+ \|R'\| (B_{\eta_d} (\mathcal{K}^{m-2})) \nonumber\\
&= \|S\| (B_{\eta_d} (\mathcal{K}^{m-2})) + \mathbb{M} (R') - \|S\| (\mathcal{M}\setminus B_{\eta_d} (\mathcal{K}^{m-2}))\nonumber\\
&= 2 \|S\| (B_{\eta_d} (\mathcal{K}^{m-2})) + \mathbb{M} (R') - \mathbb{M} (S)\nonumber\\
&\leq 2 \|S\| (B_{\eta_d} (\mathcal{K}^{m-2})) + \varepsilon_d \mathbb{M} (S)\,.
\end{align}
It remains to prove (1). Note that 
\begin{align}
\mathbb{M} (R') &\leq \mathbb{M} (\Phi_\sharp (R\res B_{\delta_d} (\mathcal{K}^{m-2}))) + \mathbb{M} (\Phi_\sharp (R\res \mathcal{M}\setminus B_{\delta_d} (\mathcal{K}^{m-2})))\nonumber\\
&= \mathbb{M} (\Phi_\sharp (R\res B_{\delta_d} (\mathcal{K}^{m-2}))) + \mathbb{M} (\Phi_\sharp (S\res \mathcal{M}\setminus B_{\delta_d} (\mathcal{K}^{m-2})))\nonumber\\
&\leq \mathbb{M} (\Phi_\sharp (R\res B_{\delta_d} (\mathcal{K}^{m-2}))) + ({\rm Lip}\, \Phi)^m \mathbb{M} (S)\nonumber\\
&\leq \mathbb{M} (\Phi_\sharp (R\res B_{\delta_d} (\mathcal{K}^{m-2}))) + (1+\bar \varepsilon_d)^m \mathbb{M} (S)\, .
\end{align}
Hence we apply Lemma \ref{l:squash-2} and infer 
\[
\mathbb{M} (R') \leq C (1+\bar\varepsilon_d)^{m-2} \gamma^2 \mathbb{M} (R) + (1+\bar \varepsilon_d)^m \mathbb{M} (S)\,.
\]
Next we fix $\bar \varepsilon_d$ so that $(1+\bar\varepsilon_d)^m=1+\frac{\varepsilon_d}{2}$, and then we choose $\gamma$ sufficiently small so that 
\[
C (1+\bar\varepsilon_d)^{m-2} \gamma^2 \mathbb{M} (R) \leq \frac{\varepsilon_d}{2} \mathbb{M} (S)\, .
\]
Note that the choice of $\gamma$, unlike that of $\varepsilon_d$, will indeed depend on $R$ and $S$. 
\end{proof}

\section{Proof of the main theorem}\label{s:proof}
We begin with some preliminary lemmas. 

\begin{lemma}\label{l:m-5/n+4}
    Let $\mathcal{M}$ be as in Assumption \ref{a:1}, $\mathcal{K}$ a smooth triangulation of $\mathcal{M}$, $k\in \{0, \ldots , m+n-1\}$ and $U_\delta (\mathcal{K}^k)$ as in Lemma \ref{l:spigoli-allisciati}. Then the complement of $U_\delta(\mathcal{K}^k)$ is homotopy equivalent to a complex of dimension $m+n-k-1$.
\end{lemma}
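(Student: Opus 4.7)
The plan is to reduce the claim to the classical fact that the complement of the $k$-skeleton of a smooth triangulation of an $(m+n)$-manifold deformation retracts onto a dual complex of complementary dimension $(m+n)-k-1$.

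\medskip

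\noindent\textbf{Step 1 (Reduction to $\mathcal{M}\setminus\mathcal{K}^k$).} By the third bullet of Lemma \ref{l:spigoli-allisciati}, the set $\mathcal{M}\setminus U_\delta(\mathcal{K}^k)$ is a deformation retract of $\mathcal{M}\setminus V_{\delta'}(\mathcal{K}^k)$, and the first assertion of Lemma \ref{l:spigoli} shows that $\mathcal{M}\setminus V_{\delta'}(\mathcal{K}^k)$ is in turn a deformation retract of $\mathcal{M}\setminus \mathcal{K}^k$. Hence it is enough to exhibit a homotopy equivalence between $\mathcal{M}\setminus \mathcal{K}^k$ and a finite CW complex of dimension $m+n-k-1$.

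\medskip

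\noindent\textbf{Step 2 (Dual skeleton).} Consider the first barycentric subdivision $\mathcal{K}'$ of $\mathcal{K}$, and for a simplex $\sigma$ of $\mathcal{K}$ denote by $\hat\sigma$ its barycenter. Let $L$ be the full subcomplex of $\mathcal{K}'$ spanned by the vertices $\{\hat\sigma : \dim\sigma > k\}$: its simplices correspond to chains $\sigma_1\subsetneq\cdots\subsetneq\sigma_p$ of simplices of $\mathcal{K}$ all of dimension strictly greater than $k$. Since the dimensions along such a chain strictly increase and are bounded above by $m+n$, one has $p\leq (m+n)-k$, so $\dim L \leq (m+n)-k-1$. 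Moreover $L\cap \mathcal{K}^k=\emptyset$ by construction.

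\medskip

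\noindent\textbf{Step 3 (Deformation retraction $\mathcal{M}\setminus\mathcal{K}^k\to L$).} Every point $p\in\mathcal{M}$ lies in the interior of a unique simplex $\Delta$ of $\mathcal{K}'$, whose vertices are barycenters $\hat\sigma_1,\ldots,\hat\sigma_q$ associated to a chain $\sigma_1\subsetneq\cdots\subsetneq\sigma_q$ in $\mathcal{K}$. Writing $p=\sum_j\lambda_j\hat\sigma_j$ in barycentric coordinates, the hypothesis $p\notin\mathcal{K}^k$ is equivalent to $\Lambda(p):=\sum_{\dim\sigma_j>k}\lambda_j>0$. Define
\[
r(p):=\frac{1}{\Lambda(p)}\sum_{\dim\sigma_j>k}\lambda_j\,\hat\sigma_j\in L,\qquad H(p,t):=(1-t)\,p+t\,r(p),
\]
where the convex combination is taken inside the simplex $\Delta$. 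One checks continuity across common faces of simplices of $\mathcal{K}'$: passing to a face amounts to setting some $\lambda_j=0$, and the formula for $r$ depends only on the nonzero coefficients $\lambda_j$ with $\dim\sigma_j>k$, which are unchanged. The straight-line homotopy $H$ stays inside $\mathcal{M}\setminus\mathcal{K}^k$ because at every time $t\in[0,1]$ the sum $\Lambda(H(p,t))$ remains strictly positive. This produces the desired deformation retraction of $\mathcal{M}\setminus\mathcal{K}^k$ onto $L$.

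\medskip

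\noindent The only genuine subtlety is the continuity of $r$ at points where the ambient simplex of $\mathcal{K}'$ changes, and the verification that the straight-line homotopy $H$ does not cross $\mathcal{K}^k$. Both are immediate from the standard behaviour of barycentric coordinates under passage to a face, so once the dual skeleton $L$ has been identified the proof is routine.
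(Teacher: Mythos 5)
Your argument is correct and follows essentially the same route as the paper: after the reduction to $\mathcal{M}\setminus\mathcal{K}^k$, the complex $L$ you build is precisely the $(m+n-k-1)$-skeleton of the dual cell complex that the paper identifies. The only difference is that you write out the barycentric-coordinate deformation retraction explicitly, whereas the paper obtains it by observing that $\text{Bs}(\mathcal{K}^k)$ is a full subcomplex of $\text{Bs}(\mathcal{K})$ and invoking Lemma 70.1 of Munkres.
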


\begin{proof} 
First we note that the complement of $U_\delta(\mathcal{K}^k)$ is a deformation retract of the complement of $\mathcal{K}^k$ by Lemma \ref{l:spigoli} and Lemma \ref{l:spigoli-allisciati}, and therefore $\mathcal{K} \setminus U_\delta(\mathcal{K}^k)$ is homotopy equivalent to $\mathcal{K} \setminus \mathcal{K}^k$; we then denote\footnote{With an abuse of notation between the simplicial complex and the $t$-image of the geometric realization of the simplicial complex itself.} $$\mathcal{K}^k_c:= \mathcal{K}\setminus\mathcal{K}^k.$$

Now we show that $\mathcal{K}^k_c$ is homotopy equivalent to a complex of dimension $m+n-k-1$, and in particular that \begin{equation}\label{e:scheletroduale}\mathcal{K}^k_c \sim \mathcal{K}_*^{m+n-k-1},\end{equation}
where $\mathcal{K}_*$ is the dual cell complex of the triangulation $\mathcal{K}$, \textit{cfr.} \cite[\S 64]{MunkresElements}. To this aim, we first show the following:
\begin{equation}\label{e:suddivisionebari}
    \mathcal{K}^k_c \sim \text{Bs}(\mathcal{K}) - \text{Bs}(\mathcal{K}^k),
\end{equation} where $\text{Bs}(\cdot)$ is the barycentric subdivision and the operation $\text{Bs}(\mathcal{K})-\text{Bs}(\mathcal{K}^k)$ represents all simplexes of $\text{Bs}(\mathcal{K})$ that are disjoint from $\text{Bs}(\mathcal{K}^k)$. By definition, we have that $$\mathcal{K}^k_c= \mathcal{K}\setminus \mathcal{K}^k= \text{Bs}(\mathcal{K}) \setminus \text{Bs}(\mathcal{K}^k).$$ Hence \eqref{e:suddivisionebari} follows just by noticing that, in terms of simplicial complexes, $\text{Bs}(\mathcal{K}^k)$ is a full subcomplex of the complex $\text{Bs}(\mathcal{K})$, \textit{i.e.} every simplex of $\text{Bs}(\mathcal{K})$ whose vertices are in $\text{Bs}(\mathcal{K}^k)$ is itself in $\text{Bs}(\mathcal{K}^k)$, and by applying \cite[Lemma 70.1]{MunkresElements}. Since the complex $\text{Bs}(\mathcal{K}) - \text{Bs}(\mathcal{K}^k)$ corresponds exactly to the $m+n-k-1$-skeleton $\mathcal{K}^{m+n-k-1}_*$ of the dual cell structure of $\mathcal{K}$, \eqref{e:scheletroduale} follows, and hence the final result.
\end{proof}

\begin{lem}\label{l:m+4_equivalenza}
Let $h : T(\widetilde{\gamma}^n) \rightarrow \mathbf{K}(\mathbb{Z},n)$ be a map representing the Thom class $u \in \mathbf{H}^n(T(\widetilde{\gamma}^n), \mathbb{Z})$. Then $h$ is an $n+4$-equivalence, for all positive integers $n$.
\end{lem}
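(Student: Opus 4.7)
The plan is to apply Serre's form of Whitehead's theorem (Theorem \ref{t:II.6}) with $k = n+4$. The case $n=1$ is trivial, since $\BSOn$ is contractible, so that $T(\widetilde{\gamma}^1) \simeq S^1 \simeq \mathbf{K}(\mathbb{Z}, 1)$ and $h$, pulling back $\iota_1$ to a generator of $\mathbf{H}^1(S^1, \mathbb{Z})$, is forced to be a homotopy equivalence. For $n \geq 2$ both spaces are simply connected, and the task reduces to verifying that, for every prime $p$,
\[
h^*\colon \mathbf{H}^i(\mathbf{K}(\mathbb{Z}, n), \mathbb{Z}_p) \longrightarrow \mathbf{H}^i(T(\widetilde{\gamma}^n), \mathbb{Z}_p)
\]
is an isomorphism for $i < n+4$ and a monomorphism for $i = n+4$.

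First I would compute both sides in the range $n \leq i \leq n+4$. On the target, the Thom isomorphism $\Phi(x) = \pi^*(x) \smile u$ identifies $\widetilde{\mathbf{H}}^{n+j}(T(\widetilde{\gamma}^n), \mathbb{Z}_p)$ with $\mathbf{H}^{j}(\BSOn, \mathbb{Z}_p)$ for $0 \leq j \leq 4$; these groups are classical, being a polynomial ring on Stiefel--Whitney classes $w_2, \ldots, w_n$ for $p = 2$ and, for $p$ odd, concentrated in degrees divisible by $4$ with $\mathbf{H}^4(\BSOn, \mathbb{Z}_p) = \mathbb{Z}_p\langle p_1 \bmod p\rangle$ generated by the first Pontryagin class. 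On the source, Serre's description of $\mathbf{H}^*(\mathbf{K}(\mathbb{Z}, n), \mathbb{Z}_p)$ as a polynomial algebra on admissible Steenrod monomials applied to $\iota_n$ (for $p=2$: with excess $<n$ and not ending in $Sq^1$) shows that in degrees $n \leq i \leq n+4$ the generators are exactly $\iota_n$, $Sq^2\iota_n$, $Sq^3\iota_n$, and either $Sq^4\iota_n$ or $\iota_n^2 = Sq^n\iota_n$ (the latter via the top-square relation when $n \leq 4$) for $p = 2$; $\iota_n$ and $\mathcal{P}^1_3\iota_n$ for $p = 3$; and $\iota_n$ alone for $p \geq 5$.

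Next I would identify $h^*$ on these generators by means of Wu's formulas. Since $h^*(\iota_n) = u$ by construction and $h^*$ commutes with stable cohomology operations, one has $h^*(Sq^i\iota_n) = Sq^i u$ and $h^*(\mathcal{P}^1_3\iota_n) = \mathcal{P}^1_3 u$. The Wu formula $Sq^i u = \pi^*(w_i) \smile u$, together with its analog $\mathcal{P}^1_3 u = \pi^*(q_1) \smile u$ for the mod-$3$ reduced power (with $q_1$ a nonzero multiple of $p_1 \bmod 3$), shows that under $\Phi^{-1}$ the map $h^*$ carries each generator to the corresponding characteristic class: $\iota_n \mapsto 1$, $Sq^i\iota_n \mapsto w_i$, $\iota_n^2 \mapsto w_n$ when $p = 2$, and $\mathcal{P}^1_3\iota_n$ to a nonzero multiple of $p_1 \bmod 3$ when $p = 3$. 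A degree-by-degree comparison with the explicit computation above then yields the required isomorphism for $i < n+4$ and monomorphism for $i = n+4$ in every case.

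The main obstacle I anticipate is the Wu-type identification at $p = 3$: one must verify that $\mathcal{P}^1_3 u$ corresponds, under the Thom isomorphism, to a nonzero multiple of $p_1 \bmod 3$ in $\mathbf{H}^4(\BSOn, \mathbb{Z}_3)$. Unlike the $p = 2$ formula $Sq^i u = \pi^*(w_i) \smile u$, which is standard textbook material, the mod-$p$ analog requires the general theory of Wu classes for reduced $p$-th powers and a careful tracking of normalizations to ensure nonvanishing; this is the key nontrivial input, and it is precisely the obstruction-theoretic manifestation of the observation in Remark \ref{r:St_pi} that the mod-$p$ cohomology of the oriented Grassmannians is concentrated in degrees divisible by $4$. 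Once this piece is in place, the rest of the argument is routine bookkeeping across the small-$n$ cases.
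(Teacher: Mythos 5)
Your proposal follows essentially the same route as the paper: verify the hypotheses of the Serre--Whitehead criterion (Theorem~\ref{t:II.6}) with $k=n+4$ by comparing mod-$p$ cohomology in the range $n\leq i\leq n+4$ via the Thom isomorphism, the Serre computation of $\mathbf{H}^*(\mathbf{K}(\mathbb{Z},n),\mathbb{Z}_p)$, the classical cohomology of $\BSOn$, and the Wu formula $\Phi^{-1}Sq^i u=w_i$ and its mod-$3$ analogue $\Phi^{-1}\mathcal{P}^1 u= p_1$ (which the paper handles by citing \cite[Theorem 19.7]{MilnorStasheff} together with Proposition~\ref{p:stiefelsteenrod} rather than redoing the Wu-class bookkeeping you anticipate as the key input). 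The only discrepancy worth noting: the paper disposes of $n=2$ separately via the identification $T(\widetilde{\gamma}^2)\simeq\mathbb{CP}(\infty)\simeq\mathbf{K}(\mathbb{Z},2)$ and runs the cohomological comparison only for $n\geq 3$, whereas you run it from $n\geq 2$ while asserting that $\mathbf{H}^*(\BSOn,\mathbb{Z}_p)$ is concentrated in degrees divisible by $4$ for odd $p$; this is false for $n=2$ (the Euler class $e\in\mathbf{H}^2(\mathbf{BSO}(2),\mathbb{Z}_p)$ is nonzero) and, strictly, also requires $e$ in degree $4$ when $n=4$. The conclusion is nonetheless unharmed, since for $n=2$ the generators $\iota_2,\iota_2^2,\iota_2^3$ map under $\Phi^{-1}h^*$ to $1,e,e^2$ and the argument closes; but your stated description should be corrected or, more cleanly, $n=2$ should be treated separately as in the paper.
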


\begin{proof}
The spaces are the same for $n\in \{1,2\}$: $T(\widetilde{\gamma}^1)$ is homotopy equivalent to the circle $S^1$, which is a realization of $\mathbf{K}(\mathbb{Z},1)$, while $T(\widetilde{\gamma}^2)$ is homotopy equivalent to the infinite complex projective space $\mathbb{C}\mathbb{P}(\infty)$, of type $\mathbf{K}(\mathbb{Z},2).$ Hence, we can assume $n\geq 3.$ Towards an application of Theorem \ref{t:II.6}, we recall the computations of the cohomology rings of $\EilenbergML$ and of the classifying space $\BSOn$, for any group coefficient $\mathbb{Z}_p$.

By Serre's computations using spectral sequences of fibre spaces, the cohomology $\mathbf{H}^{n+i}\big(\mathbf{K}(\mathbb{Z},n)\big)$ with $\mathbb{Z}_2$ coefficients is generated by the Steenrod squares $Sq^2, Sq^3$ (and $Sq^4$ if $n\geq 4$) for $i\leq 4$, see \cite[Théorème 3]{Serre53}. By calculations of Cartan with coefficients in $\mathbb{Z}_3$, the cohomology $\mathbf{H}^{n+i}(\mathbf{K}(\mathbb{Z},n))$ is generated by $\mathcal{P}^1_3$ in dimensions less than or equal to $n+4$, while for $\mathbb{Z}_p$ coefficients with prime $p>3$ there are no generators between dimension $n$ and dimension $n+8$, see for example \cite[Chapitre II, \S 8, \S 9]{Thom54} or \cite[\S 10.5]{DubrovinFomenkoNovikovIII}.

The cohomology ring of $\BSOn$ with coefficients in $\mathbb{Z}_2$ is generated by the Stiefel-Whitney classes $w_2, \dots, w_n$ of $\widetilde{\gamma}^n$, \textit{cfr.} Proposition \ref{p:BSOn_mod2}, that is $$\mathbf{H}^{*}\big(\BSOn, \mathbb{Z}_2\big)= \mathbb{Z}_2\big[w_2,w_3,w_4 \dots w_n\big].$$

For odd primes and in dimensions $i\leq 5$, we have that, \textit{cfr.} Proposition \ref{p:BSOn_modp}, $$ \begin{aligned} 
\mathbf{H}^{*}\big(\BSOn,\mathbb{Z}_p\big)&= \mathbb{Z}_p\big[p_1\big] \quad \quad \, \,\, \text{if } n \neq 4,\\ 
\mathbf{H}^{*}\big(\mathbf{BS O}(4), \mathbb{Z}_p\big)&= \mathbb{Z}_p\big[p_1, e\big], \quad\, \text{if } n =4.\end{aligned}$$

For every $p$ prime, let $\Phi_p$ denote the Thom isomorphism between $\mathbf{H}^i(\BSOn,\mathbb{Z}_p)$ and $\widetilde{\mathbf{H}}^{n+i}(T(\widetilde{\gamma}^n), \mathbb{Z}_p)$ and $u_p$ the Thom class. Since by Proposition \ref{p:stiefelsteenrod} and \cite[Theorem 19.7]{MilnorStasheff} we have\footnote{Denoting with a slight abuse $p_1$ for $p_1$ reduced mod $3$.} that $\Phi_2(w_i)= Sq^i(u_2)$ and $\Phi_3(p_1)=\mathcal{P}_3^1(u_3)$, it follows that, for any group coefficient $\mathbb{Z}_p$, the induced map in cohomology $$h^*: \mathbf{H}^{n+i}\big(\mathbf{K}(\mathbb{Z},n),\mathbb{Z}_p\big) \rightarrow \mathbf{H}^{n+i}(T(\widetilde{\gamma}^n),\mathbb{Z}_p)$$ is an isomorphism for dimensions less than or equal to $n+3$ and a monomorphism in dimension $n+4$. Since $\mathbf{K}(\mathbb{Z},n)$ and $T(\widetilde{\gamma}^n)$ are simply connected, by Theorem \ref{t:II.6}, we conclude that $$\pi_k\big(\mathbf{K}(\mathbb{Z},n), T(\widetilde{\gamma}^n)\big)=0 \quad \text{for }k\leq n+4,$$ ending the proof.\end{proof}

\begin{remark}\label{r:rappresentofinomleq4}
 Lemma \ref{l:m+4_equivalenza} shows that in particular that, for dimension $m\in \{1,2,3,4\}$ and for any codimension $n\in \mathbb{N}\setminus\{0\}$, every homology class $\tau \in \mathbf{H}_m({\mathcal{M},\mathbb{Z}})$ is represented by an embedded smooth submanifold $\Sigma$ in $\mathcal{M}$. It is important to remark that, by \cite[p.56, footnote 9]{Thom54}, we also know that every homology class of $\mathbf{H}_m(\mathcal{M},\mathbb{Z})$ for $m\leq 6$ is representable by a smooth submanifolds, due to the vanishing of the obstruction of the corresponding Poincaré dual $x$, $St_3^5(x)$, where $St_3^5$ represents (up to a sign) the following cohomology operations, \textit{cfr.} Remark \ref{r:St_pi}, $$St_3^5=\beta^* \circ \mathcal{P}^1_3 \circ \theta_3:\mathbf{H}^*(\mathcal{M},\mathbb{Z}) \rightarrow \mathbf{H}^{*+5}(\mathcal{M},\mathbb{Z}).$$

\end{remark}

\begin{proof}[Proof of Theorem \ref{t:1}] 
Fix $ \varepsilon_c > 0$, whose choice will be specified later, and an integral current $T$ in a homology class $\tau\in \mathbf{H}_m (\mathcal{M}, \mathbb Z)$. First of all apply Proposition \ref{p:poly_approx_prescribedsing} to find a sufficiently small $\delta_c'>0$, a suitable triangulation $\mathcal{K}$ of the manifold and a new integral current $P'=:S$ with the property that $S$ is in the same homology class of $T$ and the following facts hold:
\begin{itemize}
\item $\mathbb{M} (S) \leq \mathbb{M} (T) + 3\varepsilon_c$ and $\mathbb{F} (S-T) < 3 \varepsilon_c$;
\item $\|S\| (B_{\delta_c'} (\mathcal{K}^{m-2})) \leq 3\varepsilon_c$;
\item $B_{\delta_c'} (\mathcal{K}^{m-2})$ is homotopy equivalent to $\mathcal{K}^{m-2}$;
\item $S\res \mathcal{M} \setminus B_{\delta_c} (\mathcal{K}^{m-2})= \llbracket \Gamma \rrbracket$ for a smooth submanifold $\Gamma$.
\end{itemize}
We observe the following important fact: if we first choose $\varepsilon_c$, then $\delta_c$, $\delta_c'$ and $\frac{\delta_c}{\delta_c'}$ can all be made smaller than any desired constant, while the triangulation is instead kept fixed (because it depends only on $\varepsilon_c$). 

We have now fixed a triangulation $\mathcal{K}$ and we can therefore fix constant $C_0$ and $\bar\delta$ so that Lemmas \ref{l:spigoli} and \ref{l:spigoli-allisciati} apply. We now require that $V_{{\delta}_c/2}(\mathcal{K}^{m-2})  \subset \subset  B_{\delta_c'} (\mathcal{K}^{m-2})$ for some ${\delta}_c/2 << \delta_c'$.  Hence we apply Lemma \ref{l:spigoli-allisciati} (where $\delta'<\delta$ corresponds here to $\delta_c/2<\tilde{\delta}/2)$ to find a $U_{\tilde{\delta}/2} (\mathcal{K}^{m-2})$ suitably close to $V_{\tilde{\delta}/2} (\mathcal{K}^{m-2})$. We will want that $B_{\delta_c}(\mathcal{K}^{m-2}) \subset U_{\tilde{\delta}/2} (\mathcal{K}^{m-2}) \subset V_{\tilde{\delta}/2} (\mathcal{K}^{m-2}) \subset V_{\tilde{\delta}} (\mathcal{K}^{m-2}) \subset B_{\delta_c'} (\mathcal{K}^{m-2})$. This step requires to take $\frac{\delta_c}{\delta_c'}$ sufficiently small and $\tilde \delta < \delta'_c$. Define now $\Omega := \mathcal{M} \setminus U_{\tilde{\delta}/2} (\mathcal{K}^{m-2})$.

The current $\llbracket \Gamma \rrbracket$ obtained from Proposition \ref{p:poly_approx_prescribedsing} is (when restricted to $\Omega$ and not relabelled) a smooth compact oriented submanifold of $\Omega$ with $\partial \Gamma \subset \partial \Omega$, provided $\partial \Omega$ is transversal to $\Gamma$, which can be ensured via a small smooth perturbation. Denoting by $x \in \mathbf{H}^{n}(\mathcal{M})$ the Poincaré dual of $\tau$, note that its restriction $x_{|\Omega} \in \mathbf{H}^n(\Omega)$ to $\Omega$ is the relative Poincaré dual of a relative homology class which is represented by the smooth compact embedded submanifold $\Gamma \subset \Omega$ with boundary $\partial \Gamma = \Gamma \cap \partial \Omega$. Hence, by Theorem \ref{t:Thomboundary}, there exists a map $$F: \Omega \rightarrow T(\widetilde{\gamma}^n)$$ such that $F^*(u) = x_{|\Omega}$; in addition, $F$ is smooth and transverse on $\BSOn$ (and such that $F_{|\partial \Omega}$ is also transverse), so that $F^{-1}(\BSOn)=\Gamma$, which is the smooth part of $S$. 

We then take $\delta$ sufficiently small so that $\Omega \subset \mathcal{M}\setminus U_\delta (\mathcal{K}^{m-5})$ for the $U_\delta$ given in Lemma \ref{l:spigoli-allisciati}. Then, by Lemma \ref{l:m-5/n+4} we have that $\mathcal{M} \setminus U_\delta(\mathcal{K}^{m-5})$ is homotopy equivalent to a complex of dimension $n+4$. Denote $$Q:=\mathcal{M} \setminus U_\delta(\mathcal{K}^{m-5}).$$
Given the $n$-dimensional cohomology class $x \in \mathbf{H}^{n}(\mathcal{M})$ which is the Poincaré dual of $\tau$, we consider its restriction to $Q$, that is $x_{|Q} \in \mathbf{H}^n(Q)$; note that $x_{|Q}$ can be represented by a continuous map $$g:Q \rightarrow \mathbf{K}(\mathbb{Z},n)$$ (in a suitable homotopy class of continuous maps) pulling-back the fundamental class of $\mathbf{K}(\mathbb{Z},n)$ to itself, \textit{i.e.} $g^*(\iota)=x_{|Q}.$ By Lemma \ref{l:m+4_equivalenza} and Proposition \ref{p:Spanier7622}, there exists a map $f: Q \rightarrow T(\widetilde{\gamma}^n)$ such that the diagram commutes, \textit{i.e.} $f$ pulls-back the universal Thom class to $x_{|Q}$.

\[
\xymatrix{
 & T(\widetilde{\gamma}^n) \ar[d]^{h} \\
Q \ar[r]_{g} \ar[ur]^{f} & \mathbf{K}(\mathbb{Z},n)
}
\]

By the same construction of the second part of the proof of Theorem \ref{t:Thomboundary}, we can assume without loss of generality that $f$ is smooth throughout $Q \setminus f^{-1}(U(\infty))$ and transversal to (a sufficiently high dimensional approximation of) the zero cross-section $\BSOn \subset T(\widetilde{\gamma}^n)$, with $\partial f$ also transversal to it. Hence, $f^{-1}(\BSOn)$ is a compact smooth $m$-dimensional embedded submanifold, with boundary contained in $\partial Q$; denote it as $$\mathcal{N}:=f^{-1}(\BSOn).$$ Moreover, $\mathcal{N}$ represents the relative Poincaré dual of $x_{|Q}$, which equals $j_*(\tau) \in \mathbf{H}_m(Q,\partial Q),$ where $j_*:\mathbf{H}_m(\mathcal{M}) \rightarrow \mathbf{H}_m(Q,\partial Q).$ 

We next wish to extend $\llbracket\mathcal{N}\rrbracket$ (which is an integral current in $\mathcal{M}$) to an integral current $N$ with the property that $N\res \mathcal{M}\setminus \mathcal{K}^{m-5}$ is a smooth submanifold with multiplicity $1$ and $N\res Q = \llbracket \mathcal{N}\rrbracket$. First of all, because $\mathcal{N}$ is transversal to $\partial Q$, we can extend it to a smooth submanifold over the union $Q'$ of $Q$ with any smooth collaring extension of $\partial Q$. We can then use Lemma \ref{l:spigoli-allisciati} to find such an extension $Q'$ (which consists of $Q\cup \mathcal{C}$, where $\mathcal{C}$ is the smooth tubular neighborhood in Lemma \ref{l:spigoli-allisciati}) containing $\mathcal{M}\setminus V_{\delta'} (\mathcal{K}^{m-5})$ for some $\delta'<\delta$ positive. Since $\mathcal{N}$ intersects $\partial Q$ transversally, we can extend to a smooth submanifold of $Q'$ with boundary in $\partial Q'$, meeting $\partial Q'$ transversally. With abuse of notation this extension is still denoted by $\mathcal{N}$. We can now use the map $\Phi$ of Lemma \ref{l:second-Phi} and set $$N:=\Phi_\sharp \llbracket \mathcal{N}\rrbracket.$$ The latter current is integer rectifiable because $\Phi$ is Lipschitz (and, in particular, $N$ has finite mass). Given that $\Phi$ is a diffeomorphism over $\mathcal{M}\setminus \Phi^{-1} (\mathcal{K}^{m-5}) \subset \mathcal{M}\setminus V_{\delta'} (\mathcal{K}^{m-5})$, then $N\res \mathcal{M}\setminus \mathcal{K}^{m-5} = \llbracket \Sigma \rrbracket$ for some smooth submanifold $\Sigma$. Moreover $\spt (\partial N)\subset \mathcal{K}^{m-5}$ and in particular, by Federer flatness theorem, $\partial N = 0$, namely $N$ is a cycle. 

Consider now the two maps $F: \Omega \rightarrow T(\widetilde{\gamma}^n)$ and $f: Q \rightarrow T(\widetilde{\gamma}^n)$ such that $F^{-1}(\BSOn)= \Gamma$ and $f^{-1}(\BSOn)= \mathcal{N}\cap Q$. If we consider the restriction of $f$ to $\Omega \subset Q$, we obtain a new map $f|_{\Omega}: \Omega \rightarrow T(\widetilde{\gamma}^n)$ that pulls-back the universal Thom class to $x_{|\Omega}$. By Lemma \ref{l:m-5/n+4} we observe that $\Omega$ has the homotopy type of an $(n+1)$-complex, so that by Corollary \ref{c:Whitehead_Homotopyclasses} we can conclude that $F$ and $f_{|\Omega}$ are homotopic: the homotopy can be taken smooth by \cite[Lemme IV.5]{Thom54}. In particular, we define the smooth homotopy $H:[0,1] \times \Omega$ such that $H(0,x)= f_{|\Omega}(x)$ and $H(1,x)= F(x)$. In a small collar neighborhood $\mathcal{C}$ of $\partial \Omega$ inside $\Omega$, which we identify with $\partial \Omega\times (0,1]$, we then glue the maps $f$ and $F$ together. Using the notation $x=(y,s)\in \mathcal{C}$ and after defining a smooth function $\varphi$ on $[0,1]$ which is identically equal to $0$ in a neighborhood of $0$ and identically equal to $1$ in a neighborhood of $1$, we set  
\begin{equation}\label{e:homotopy}
\widehat{f}(x):=\begin{cases}
       F(x)  & \quad if  \, \, x\in \Omega\setminus \mathcal{C},\\    
       H\left(x,\varphi (s)\right)  & \quad if\,\,  x\in \mathcal{C},   \\
        f(x)  & \quad if\,\, x\in Q\setminus \Omega  
\end{cases}\end{equation}

Since $T(\widetilde{\gamma}^n) \setminus \{\infty\}$ is a smooth submanifold, it follows from \cite[Proposition 2.3.4 (ii)]{Wall} that we can find $\widehat{f}:Q\rightarrow T(\widetilde{\gamma}^n)$, not relabelled, which is smooth throughout $Q \setminus f^{-1}(U(\infty))$, coincides with $f(x)$ in a neighborhood of $\partial Q$ and with $F$ on $\mathcal{M}\setminus V_{\tilde{\delta}} (\mathcal{K}^{m-2})$.
Analogously, by \cite[Proposition 4.5.10]{Wall}, we can perturb $\widehat{f}$ so that it is transverse to $\BSOn$ and coinciding with $f(x)$ in a neighborhood of $\partial Q$ and with $F$ on $\mathcal{M}\setminus V_{\tilde{\delta}} (\mathcal{K}^{m-2})$.

Consider now the submanifold $\Sigma'$ of $\mathcal{M}\setminus \mathcal{K}^{m-5}$ which consists of:
\begin{itemize}
    \item $\Sigma$ in $V_{\delta'} (\mathcal{K}^{m-5})\setminus \mathcal{K}^{m-5}$;
    \item $\mathcal{N}$ on $U_\delta (\mathcal{K}^{m-5})\setminus V_{\delta'} (\mathcal{K}^{m-5})$;
    \item $\widehat{f}^{-1} (\BSOn)$ on $Q$.
\end{itemize}
This is a smooth submanifold because:
\begin{itemize}
    \item $f$ and $\widehat{f}$ coincide in a neighborhood of $\partial Q$ and hence $\widehat{f}^{-1} (\BSOn)$ coincides with $\mathcal{N}$ in a neighborhood of $\partial Q$;
    \item $\Sigma = \Phi (\mathcal{N}) = \mathcal{N}$ in a neighborhood of $\partial V_{\delta'} (\mathcal{K}^{m-5})$.
\end{itemize}
    Moreover, $R= \llbracket \Sigma'\rrbracket$ is an integer rectifiable current with finite mass and such that $\spt (\partial R)\subset \mathcal{K}^{m-5}$; in particular it is a cycle by Federer's flatness theorem. Observe also that $R-S$ is supported, by construction, in $V_{\tilde{\delta}} (\mathcal{K}^{m-2})$, which is homotopy equivalent to $\mathcal{K}^{m-2}$, and thus has trivial $m$-homology. In particular $R-S$ is a boundary, namely $R$ and $S$ belong to the same homology class. 

We now apply Proposition \ref{p:squash} to $S$ and $R$, noticing that the $\varepsilon_d$ in Proposition \ref{p:squash} is a parameter to be chosen in terms of the $\varepsilon$ of the statement of Theorem \ref{t:1}, and the $\eta_d$ in Proposition \ref{p:squash} is $\delta'_c$ here. This gives us a parameter $\delta_d$, which depends on $\varepsilon_d$ and $\delta'_c$. In turn we impose that $\tilde{\delta} \leq \delta_d$ so that we can apply Proposition \ref{p:squash}. Since $\varepsilon_d$ will be specified only in terms of $\mathbb{M}(T)$ and of $\varepsilon$ in the statement of Theorem \ref{t:1}, while $\delta'_c$ depends on $\varepsilon_c$, which will also be specified only in terms of $\mathbb{M}(T)$ and $\varepsilon$ in the statement of Theorem \ref{t:1}, the parameter $\tilde{\delta}$ can be taken smaller than $\delta_d$. We can then find a current $R':=\Phi_\sharp R$ for a smooth diffeomorphism $\Phi$ isotopic to the identity such that  
\begin{align}
\mathbb{M} (R') \leq (1+\varepsilon_d)\, \mathbb M (S) \leq (1+\varepsilon_d) (\mathbb{M} (T) + 3\varepsilon_c)\,\nonumber .
\end{align}
We therefore conclude that $R'$ is homologous to $R$, hence to $S$, and therefore to $T$. Moreover, if we choose
\begin{align}
&\varepsilon_d \,(3+\mathbb{M} (T)) < \frac{\varepsilon}{2} \qquad \mbox{and} \qquad
\,3\varepsilon_c < \frac{\varepsilon}{2}\,\nonumber ,
\end{align}
then $\mathbb{M} (R') \leq \mathbb{M} (T) + \varepsilon$. Finally
\begin{align}
\mathbb{F} (T-R') &\leq 3\varepsilon_c + \mathbb{F} (S-R') 
\leq 3\varepsilon_c + C (\varepsilon_d
\,\mathbb{M} (S) + 2\|S\| (B_{\delta'_c}(\mathcal{K}^{m-2})))^{\frac{m+1}{m}}\nonumber\\
&\leq 3\varepsilon_c + C (\varepsilon_d (\mathbb{M} (T) +\varepsilon_c) + 6\varepsilon_c)^{\frac{m+1}{m}}\,\nonumber.
\end{align}
Therefore it is clear that a suitable choice of $\varepsilon_d$ and $\varepsilon_c$ depending only on $\mathbb{M} (T)$ and $\varepsilon$ suffices to how $\mathbb{F} (T-R') \leq \varepsilon$. 

The proof of part $(3)$ of Theorem \ref{t:1} is analogous; by assumption we know that $\tau$ is represented by a smooth closed submanifold $\Sigma$ and hence, by Theorem \ref{t:Thomclosed} there exists a map $g:\mathcal{M} \rightarrow T(\univob)$ which pulls-back the universal Thom class $u \in \mathbf{H}^n(T(\widetilde{\gamma}^n), \mathbb{Z})$ to the Poincaré dual of $\tau$. Substituting in the previous steps the map $f$ with this new map $g$, defined over the whole ambient space $\mathcal{M}$, and defining a similar homotopy as that one in \eqref{e:homotopy}, the result follows by applying Proposition \ref{p:squash} to $S$ and $\llbracket \Sigma \rrbracket$, where $S$ is the integral cycle denoted $P'$ in Proposition \ref{p:poly_approx_prescribedsing}.

\end{proof}

\section{Optimality of the main theorem}\label{s:optimality}

The codimension 5 construction in Theorem \ref{t:1} is the best possible result in full generality, as shown by Theorem \ref{t:Thom_innatelysingular}. 

We start by recalling Thom's example of an integral homology class of dimension 7 in an orientable smooth manifold of dimension 14 which is not realizable by means of a submanifold, \textit{cfr.} \cite[Théorème III.9]{Thom54}\footnote{We remark that dimension 14 of the ambient space is not crucial: this example can be easily adapted to the lowest possible dimension allowed, that is dimension 10; we also refer to \cite{Hankecycles} for an example of a $7$ dimensional integral homology class which does not admit a smooth representative in a $10$ dimensional manifold with torsion-free homology.}.

\begin{example}[Thom]\label{ex:ThomLensSpaces}
For $i=1,2$ consider the lens space $L_i:=S^7/\mathbb{Z}_3$, which is the orbit space of the $7$-sphere with the free action of $\mathbb{Z}_3$ generated by the rotation. Let $v_i$ be generator of $\mathbf{H}^1(L_i,\mathbb{Z}_3)\simeq \mathbb{Z}_3$ and call $u_i=\beta_3(v_i) \in \mathbf{H}^2(L_i,\mathbb{Z}_3)\simeq \mathbb{Z}_3$. Consider the smooth oriented $14$-dimensional manifold $L:=L_1\times L_2$ and the following cohomology class, where the powers and $\cdot$ denote the cup product (seeing $\mathbf{H}^*(L_i)$ as embedded in $\mathbf{H}^*(L)$): $$y=u_1 \cdot v_2 \cdot(u_2)^2 - v_1 \cdot (u_2)^3 \in \mathbf{H}^7(L,\mathbb{Z}_3).$$ Note that $y$ is actually the reduction mod $3$ of an integral cohomology class, since $y=\beta_3(v_1 \cdot v_2 \cdot (u_2)^2)$ and hence $y=\theta_3(x)$, with $x \in \mathbf{H}^7(L,\mathbb{Z})$ given by $x =\beta^*(v_1 \cdot v_2 \cdot (u_2)^2)$, \textit{cfr.} Remark \ref{r:St_pi} for the notation. Denoting by $z \in \mathbf{H}_7(L,\mathbb{Z})$ its Poincaré dual homology class, we see that $z$ cannot be realized in $L$ by a submanifold since $$St_3^5(x)=\beta^*\circ \mathcal{P}_3^1(y)=\beta^*((u_1)^3 \cdot v_2 \cdot (u_2)^2) = (u_1 \cdot u_2)^3 \neq 0.$$
\end{example}

\begin{remark}
We remark that the obstruction to realizability comes from a cohomology operation mod $3$ and since $y \in \mathbf{H}^7(L,\mathbb{Z}_3)$, then the Poincaré dual of $3y$ can be realized by a submanifold. In general, it is a theorem of Thom, \textit{cfr.} \cite[Théorème II.29]{Thom54}, that for every integral homology class $z\in \mathbf{H}_k(\mathcal{M},\mathbb{Z})$ of a closed oriented manifold there exists a non-zero integer $N$ such that the class $Nz$ is realizable by a submanifold.
\end{remark}

Example \ref{ex:ThomLensSpaces} is the first example of \emph{innately singular} homology classes: from a geometric point of view, it represents a codimension $5$ non-removable singularity which is the geometric analogue of the algebraic obstruction given by the dual $3$-torsion cohomology operation $St_3^5.$ In particular, Thom's innately singular class can be represented by a $7$-dimensional cycle $T$ with a $2$-dimensional stratum of singularities, \textit{i.e.} a closed (equisingular) $2$-dimensional manifold $\mathcal{S}_T$ whose neighborhood is homeomorphic to a product $$\mathcal{S}_T \times C(\CP),$$ where $C(\CP)$ denotes the cone over $\CP$; the innate nature of these singularities turns out to be intrinsically linked to the well-known fact that $\CP$ does not bound any compact oriented smooth $5$-dimensional manifold, as observed in \cite{SullivanLiverpool}.

This geometric description is a consequence of another insightful work of Thom, \textit{cfr.} \cite{Thom69}, where he studied manifolds with singularities partitioning them into partially ordered \emph{strata} of varying dimensions; each such stratum has a neighborhood which is a locally trivial bundle with fiber the cone on a compact manifold with singularities, whose partially ordered set of strata has smaller dimension. This gives rise to a recursive construction that enabled Thom to understand and provide a geometric description of singularities.

We will now exploit the geometric obstruction theory described in \cite{SullivanLiverpool} for reducing the dimension of singularites of a cycle. In particular, suppose $T$ is a triangulated space of dimension $m$, and $\mathcal{S}_T$ its \emph{singularity locus} of dimension $s$. Then, for every $s$-dimensional simplex of $\mathcal{S}_T$ we can consider its \emph{link}, which is a well-defined $(m-s-1)$-dimensional manifold; this link determines an element in a suitable cobordism group $\Omega$ and the sum of the singular simplices with these link coefficients forms a cycle which defines an obstruction, that is $$\omega_T \in \mathbf{H}_s(\mathcal{S}_T, \Omega).$$ If this obstruction vanishes, then it is possible to resolve the singularity by a blow-up technique and reduce their dimension, \textit{cfr.} \cite[Theorem D]{SullivanLiverpool}. Geometrically, this means that any singular cycle representing a homology class can be resolved by replacing each conic fiber of the top singularity stratum by compact manifolds bounding the links, provided each link bounds a compact submanifold; the recurrence stops as soon as a link of singularities which is not null-cobordant is met.

In particular, if $T$ is an $m$-dimensional oriented geometric cycle, the natural obstructions lie in $$\mathbf{H}_s(T,\widetilde{\Omega}_r),$$ where $\widetilde{\Omega}_r$ denotes the $r$-dimensional oriented cobordism group and $r=m-s-1$, which coincides with the dimension of the link of each $s$-dimensional simplex of $\mathcal{S}_T$; we refer to \cite[\S 17]{MilnorStasheff} for an introduction about the oriented cobordism graded ring $\widetilde{\Omega}_*$.

\begin{thm}\label{t:Thom_innatelysingular}
    Let $z \in \mathbf{H}_7(\mathcal{M},\mathbb{Z})$ be the Thom homology class of Example \ref{ex:ThomLensSpaces} and fix a triangulation $\mathcal{K}$ of the smooth oriented closed manifold $\mathcal{M}$. Then it is impossible to find a representative $\Sigma$ for $z$ which is a smooth embedded submanifold in the complement of the $1$-dimensional skeleton of $\mathcal{K}$.
\end{thm}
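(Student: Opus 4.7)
The plan is to argue by contradiction, combining Sullivan's singularity resolution theory (invoked in the text just before the theorem) with the Thom obstruction $St^5_3(x)\neq 0$ produced in Example \ref{ex:ThomLensSpaces}. I aim to show that if a smooth embedded $\Sigma \subset \mathcal{M}\setminus \mathcal{K}^1$ representing $z$ existed, then its singularities could be resolved $3$-locally, producing a smooth submanifold representative of $z$ and contradicting the nonvanishing of $St^5_3(x)$.

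First I would refine $\mathcal{K}$ and slightly perturb $\bar{\Sigma}$ by transversality so that its closure becomes a Thom geometric cycle in the sense recalled in the excerpt, with stratified singular locus $\mathcal{S}_\Sigma=\mathcal{S}^0_\Sigma\cup \mathcal{S}^1_\Sigma\subset \mathcal{K}^1$. Around each stratum $\mathcal{S}^s_\Sigma$ of dimension $s\in\{0,1\}$ one then has a conical neighborhood with fiber $C(L_s)$, where $L_s$ is a closed oriented link of dimension $r=m-s-1=6-s$. This step reduces the analytic input (an $m$-dimensional integral cycle smooth off a $1$-skeleton) to a topological object amenable to Sullivan's machinery.

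Next I would apply the Sullivan blow-up procedure of \cite[Theorem D]{SullivanLiverpool}, whose obstructions to lowering the dimension of $\mathcal{S}^s_\Sigma$ lie in $\mathbf{H}_s(\mathcal{S}^s_\Sigma;\widetilde{\Omega}_{6-s})$. One has $\widetilde{\Omega}_6=0$ and $\widetilde{\Omega}_5=\mathbb{Z}/2$, so every such obstruction is $2$-torsion and vanishes after localization at the prime $3$. Working in the $\mathbb{Z}_{(3)}$-localized oriented bordism theory (equivalently, in Sullivan's $\mathbb{Z}/p$-manifold framework for $p=3$), the resolution can therefore be carried through and yields a representative of $z$ which is, $3$-locally, a smooth oriented submanifold of $\mathcal{M}$. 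Applying the $3$-local form of Theorem \ref{t:Thomclosed}, this gives a map $f:\mathcal{M}\to T(\widetilde{\gamma}^n)$ on $\mathbb{Z}_{(3)}$-cohomology with $f^*u=x$; since $St^5_3$ vanishes on the universal Thom class $u$ because $\mathbf{H}^*(\BSOn,\mathbb{Z}_3)$ is concentrated in degrees divisible by $4$ (as recalled in the proof of Lemma \ref{l:m+4_equivalenza}), naturality forces $St^5_3(x)=0$, contradicting Example \ref{ex:ThomLensSpaces}.

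The main obstacle is precisely the justification of the $3$-local Sullivan resolution: the integral cobordism group $\widetilde{\Omega}_5=\mathbb{Z}/2$ is nonzero, so the standard integral blow-up cannot be carried out directly, and one must argue that the $2$-torsion obstruction in $\mathbf{H}_1(\mathcal{S}^1_\Sigma;\mathbb{Z}/2)$ is irrelevant for detecting the $3$-torsion obstruction $St^5_3(x)$. The cleanest way I foresee to package this is via Sullivan's bordism with $\mathbb{Z}/p$-coefficients: once one works $p$-locally at $p=3$, the relevant coefficient cobordism groups become trivial, and the resolution proceeds formally as in the $\widetilde{\Omega}_*=0$ case. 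Everything else in the argument reduces to bookkeeping and the two already-cited topological inputs.
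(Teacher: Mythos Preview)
Your strategy and the paper's share the same arithmetic core: the Sullivan obstruction to resolving the $1$-stratum lies in $\mathbf{H}_1(\mathcal{S}_\Sigma;\widetilde{\Omega}_5)$ with $\widetilde{\Omega}_5\cong\mathbb{Z}/2$, whereas Thom's obstruction $St^5_3(x)$ is $3$-torsion, so the two cannot interact. The paper, however, exploits this far more simply than you do. Instead of passing to $\mathbb{Z}_{(3)}$-localized bordism or Sullivan's $\mathbb{Z}/p$-manifolds, it observes that the Sullivan obstruction for the cycle $2T$ is twice that of $T$, hence vanishes in a $\mathbb{Z}/2$-valued group; so $2T$ resolves \emph{integrally} down to the $0$-skeleton, and since $\widetilde{\Omega}_6=0$ it resolves further to an honest smooth embedded submanifold representing $2z$. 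Theorem~\ref{t:Thomclosed} then applies with no localization needed and forces $St^5_3(2x)=0$; but $2$ is a unit mod $3$, so $St^5_3(x)=0$, the contradiction.

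What the paper's route buys is that every step stays in the integral category: there is no need to say what a ``$3$-locally smooth submanifold'' is, nor to produce a $3$-local version of the Thom criterion. You correctly flag this as the main obstacle in your approach, and it is a real one---making the $\mathbb{Z}_{(3)}$-resolution precise and then extracting a map $\mathcal{M}\to T(\widetilde{\gamma}^n)$ from a merely $3$-local representative would require substantially more machinery (or at least more careful bookkeeping with Sullivan's $\mathbb{Z}/p$-manifold framework) than the one-line multiply-by-$2$ trick. Your plan is not wrong, but it trades an elementary step for a technical detour that the problem does not require.
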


\begin{proof}
    \hspace{-0.27cm}\footnote{We are grateful to Dennis Sullivan for this elegant proof and enjoyable conversations.}Denote by $T$ the 7-dimensional cycle representing Thom's homology class, and consider its singularity locus $\mathcal{S}_T$. Towards a proof by contradiction, assume that the cycle is a substratified set of a Whitney stratification of $\mathcal{M}$ which only intersects the one-skeleton $\mathcal{K}^1$ of a triangulation compatible with the stratification\footnote{We refer to \cite{Thom69} and \cite{GoreskyMacPherson} for the notions about stratification theory.}.

    For each 1-dimensional simplex in the cycle we consider its link, which is a 5-dimensional closed oriented manifold. Since the obstruction to the resolution of singularities is an element of $$\omega_{T} \in \mathbf{H}_1(\mathcal{S}_T, \widetilde{\Omega}_5)$$ and the 5-dimensional oriented cobordism group $\widetilde{\Omega}_5\simeq \mathbb{Z}_2$, by \cite[Theorem D]{SullivanLiverpool} the cycle $2T$ can be resolved to the lower dimensional stratum, \textit{i.e.} the zero-skeleton $\mathcal{K}^0$.

    Analogously, the link of each vertex is a $6$-dimensional closed oriented manifold and the oriented cobordism group $\widetilde{\Omega}_6$ is trivial; thus, there is no obstruction to a full resolution of the singularities of $2T$, and hence of $2z$. This is clearly in contradiction with Thom's algebraic obstruction which is 3-torsion, and that cannot be resolved if we multiply Thom's homology class $z$ by a factor 2; that is $St_3^5(2y)\neq 0,$ where $y$ is the Poincaré dual of $z$.
\end{proof}


\begin{remark}
    We remark that in \cite[page 20]{AlmgrenBrowder88-91} the counterexample to the construction is not correct, since by \cite[Corollaire II.28]{Thom54} every 5-dimensional integral homology class in an oriented closed smooth manifold is representable by a smooth embedded submanifold, and hence part $(3)$ of Theorem \ref{t:1} provides the desired smooth approximation.
\end{remark}

\begin{remark}
    As a byproduct of the proof of Lemma \ref{l:Phi}, it is also possible to show the following. Let $\mathcal{M}, \tau$ and $T$ as in Assumption \ref{a:1} and denote $\operatorname{Sing}(T)$ its singular set (in the sense of \cite[Definition  0.2]{dls3}). If $\mathcal{H}^k(\operatorname{Sing}(T))=0$ for any $k \in \{1,\dots,m\}$, then for every triangulation $\mathcal{K}$ of $\mathcal{M}$ there exists an integral current $T'$ homologous to $T$ such that $\operatorname{Sing}(T')\subset \mathcal{K}^{k-1}$ with $T'$ smooth in $\mathcal{M}\setminus \mathcal{K}^{k-1}$. Theorem \ref{t:Thom_innatelysingular} implies that, in general, for an integral current $T$ representing an integral homology class $\tau \in \mathbf{H}_m(\mathcal{M},\mathbb{Z})$ it is not possible to conclude that $\mathcal{H}^{m-5}(\operatorname{Sing}(T))=0$.
\end{remark}

\newpage
\addtocontents{toc}{\protect\setcounter{tocdepth}{1}}

\appendix

\section{Useful lemmas on triangulations and simplicial decompositions}\label{s:Appendix_triangulations}

In this section we collect a few elementary facts about triangulating regions which are used in the paper. 

\subsection{Algorithm to subdivide a convex polytope}\label{s:polytope-subdivision} First of all, we understand a convex polytope $P$ of $\mathbb R^N$ as a closed convex set with a finite number of extremal points. Given a convex polytope $P\subset \mathbb R^N$ we now describe an algorithm to triangulate it. For any convex polytope we define its barycenter as the point which is given by the convex combination of the extremal points with all equal weights (if $V_1, \ldots, V_m$ are the extremal points of $P$, then the baricenter is $\frac{1}{m} \sum_i V_i)$.

First of all, a touching hyperplane $\pi$ of $P$ is an hyperplane such that
\begin{itemize}
\item $\pi\cap P$ is nonempty;
\item $P$ is contained in one of the two closed half-spaces bounded by $\pi$.
\end{itemize}
If the dimension of $P$ is strictly smaller than $N$, then we let the set $\mathcal{F}$ of faces of $P$ be the collection of convex subsets of $P$ of the form $P\cap \pi$, where $\pi$ varies among all touching hyperplanes. If the dimension of $P$ is $N$ we add to $\mathcal{F}$ the polytope $P$ itself. We subdivide $\mathcal{F}$ as 
\[
\bigcup_{k=0}^{{\rm dim}\, (P)} \mathcal{F}_k\, ,
\]
where $\mathcal{F}_k = \{F\in \mathcal{F}: {\rm dim}\, (F)=k\}$. 
Clearly $\mathcal{F}_0$ consists of points and it is the set of extremal points of $P$ (in particular, it is a finite set) and any other element of $\mathcal{F}$ is necessarily the convex hull of some appropriate subset of $\mathcal{F}_0$. 

In order to triangulate $P$, we first observe that all $1$-dimensional faces and all $0$-dimensional faces are by definition simplices of the corresponding dimension. We then list the $2$-dimensional faces. For each face $F$ which is not a triangle we consider the barycenter $b(F)$ and we decompose $F$ into the triangles formed by $b(F)$ and the sides of $F$ (namely, the $1$-dimensional faces of $F$). Note that the collection of all such triangles (as $F$ also varies among all 2-dimensional faces) has the following property: the intersection of any pair of such triangles is either empty, or a common vertex, or a common side. Next, fix a $3$-dimensional face $F$ which is not a simplex. Each of its $2$-dimensional faces $G$ is decomposed in triangles $T$'s in the previous step. Decompose $F$ in the $3$-dimensional simplices constructed as convex hulls of any such $T$ and the barycenter $b(F)$ of $F$ (we can think of them as pyramids with basis $T$ and vertex $b(F)$). We have decomposed all 3-dimensional faces into $3$-dimensional simplices $S$. Any pair of such $S$ (irrespectively of whether they belong to the decomposition of the same 3-dimensional face or to the decompositions of two distinct faces) has the property that their intersection is a common lower-dimensional face. We proceed inductively increasing the dimension of the faces at each step until we reach (and include) the one of highest dimension, namely $P$.  

The following elementary lemmas will play an important role.

\begin{lem}\label{l:triang-1}
If $P\subset \mathbb R^N$ is a convex polytope and $A: \mathbb R^N\to \mathbb R^N$ an affine invertible map, then the triangulation $\mathcal{T}'$ for $A(P)$ obtained through the algorithm above coincides with the image through $A$ of the triangulation $\mathcal{T}$ obtained for $P$  through the algorithm, namely $\mathcal{T}' = \{A(T): T\in \mathcal{T}\}$. 
\end{lem}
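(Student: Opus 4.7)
\medskip

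\noindent\textbf{Proof plan for Lemma \ref{l:triang-1}.} The plan is to proceed by induction on the dimension $k$ of the faces, after establishing that every operation used by the algorithm is equivariant under the affine bijection $A$. First I would note that a hyperplane $\pi$ is a touching hyperplane of $P$ if and only if $A(\pi)$ is a touching hyperplane of $A(P)$, since $A$ maps closed half-spaces to closed half-spaces and preserves incidence. Consequently $A$ induces a dimension-preserving bijection $\mathcal{F}(P)\to \mathcal{F}(A(P))$ with $A(F\cap \pi)=A(F)\cap A(\pi)$, so in particular it maps $\mathcal{F}_0(P)$ (the extremal points of $P$) bijectively onto $\mathcal{F}_0(A(P))$. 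Since the extremal points of any face $F$ are precisely the extremal points of $P$ that lie in $F$, the extremal points of $A(F)$ are the images under $A$ of the extremal points of $F$; combined with the affine identity $A\!\left(\tfrac{1}{m}\sum_{i} V_i\right)=\tfrac{1}{m}\sum_{i} A(V_i)$, this yields the crucial equivariance $A(b(F))=b(A(F))$ for the barycenter. Also, $A$ maps convex hulls to convex hulls, and $F$ is a simplex if and only if $A(F)$ is, because both statements are equivalent to having exactly $\dim F + 1$ extremal points.

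Next I would run the induction on $k$. For $k=0$ and $k=1$, the faces and their images are already simplices, and the algorithm does nothing further, so $\mathcal{T}'$ and $\{A(T):T\in\mathcal{T}\}$ agree trivially on these strata. For the inductive step, fix a $k$-dimensional face $F$ of $P$ with $k\geq 2$ and assume the claim holds on all faces of dimension $<k$. If $F$ is a simplex then $A(F)$ is a simplex and both algorithms leave it alone. Otherwise the algorithm replaces $F$ by the simplices $\mathrm{conv}(b(F),T)$ as $T$ ranges over the simplices produced at the previous step inside the $(k-1)$-dimensional faces $G\subset F$. By the inductive hypothesis, $\{A(T):T\}$ is exactly the family of simplices produced by the algorithm on the $(k-1)$-dimensional faces $A(G)\subset A(F)$. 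Applying $A$ and using $A(b(F))=b(A(F))$ together with the compatibility with convex hulls, one obtains
\[
A\bigl(\mathrm{conv}(b(F),T)\bigr)=\mathrm{conv}\bigl(b(A(F)),A(T)\bigr),
\]
which is exactly the output of the algorithm on $A(F)$. This closes the induction at dimension $k$ and, upon reaching $k=\dim P$ (with the ambient polytope $P$ itself added to $\mathcal{F}_{\dim P}$ when $\dim P=N$, consistently on both sides via $A$), yields the equality $\mathcal{T}'=\{A(T):T\in\mathcal{T}\}$.

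The only mildly delicate point is the bookkeeping in the inductive step: one must be sure that the simplices $T$ used inside a $(k-1)$-dimensional face $G\subset F$ match, under $A$, those used inside $A(G)\subset A(F)$, rather than being produced by some other face of $A(P)$ that happens to share $A(G)$. This is where the dimension-preserving bijection $\mathcal{F}(P)\to\mathcal{F}(A(P))$ and the fact that the algorithm treats each face independently (it only refers to the barycenter of the face currently being subdivided and to the already-fixed triangulations of its proper subfaces) do the work; apart from that, the proof is a straightforward induction relying on the affine nature of barycenters and convex hulls.
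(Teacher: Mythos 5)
Your proof is correct. The paper states Lemma~\ref{l:triang-1} as an ``elementary lemma'' and provides no proof, so your argument fills in exactly what the authors leave implicit; the approach you take (equivariance of touching hyperplanes, faces, extremal points, barycenters, convex hulls, and the simplex test under an affine bijection, followed by induction on the dimension of the face being subdivided) is the natural and, as far as I can tell, the intended one.

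Two small remarks that strengthen the record. First, it is worth emphasizing, as you do, that the paper's ``barycenter'' of a face is the unweighted average of its \emph{extremal points}, not the centroid of the face as a body; for an arbitrary convex polytope the two can differ, but both are affinely equivariant, so the lemma would survive either convention — still, the reader should check the correct one is used, and you do. Second, your observation that $F$ is a simplex iff $A(F)$ is, because both conditions mean ``exactly $\dim F+1$ extremal points,'' is the clean way to show the branching of the algorithm (subdivide versus leave alone) is itself $A$-equivariant; without that remark the induction would silently assume the two sides branch the same way. Your closing paragraph correctly notes that the algorithm is purely local to each face and its proper subfaces, which is what lets the inductive hypothesis be applied face by face; no gap there.
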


\begin{lem}\label{l:triang-2}
Let $P, P'\subset \mathbb R^N$ be two convex polytopes whose intersection is a common face of both. Consider the triangulation $\mathcal{T}$ of $P$ and the triangulation $\mathcal{T}'$ of $P'$ obtained applying the algorithm above. Then the union of the two triangulations is a triangulation, namely: the intersection of an arbitrary element of $\mathcal{T}$ with an arbitrary element of $\mathcal{T}'$ is a common face of both simplices.
\end{lem}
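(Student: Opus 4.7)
The plan is to reduce the claim to a single structural sublemma: for every face $F$ of $P$, the sub-triangulation of $\mathcal{T}$ formed by the simplices contained in $F$ coincides with the triangulation $\mathcal{T}(F)$ obtained by applying the algorithm directly to $F$, viewed as a polytope on its own. Call this property \emph{face coherence}. It follows by induction on $\dim F$: by the construction, the simplices that the algorithm places inside $F$ are either simplices produced while processing strict sub-faces of $F$ (which, by induction, are exactly $\mathcal{T}(\partial F)$) or cones with apex at the barycenter $b(F)$ over simplices in $\mathcal{T}(\partial F)$; in either case this matches what the algorithm applied to $F$ alone would produce. Since $F := P \cap P'$ is a face of both $P$ and $P'$, face coherence yields a common sub-triangulation $\mathcal{T}_F = \mathcal{T}|_F = \mathcal{T}'|_F$ of $F$.

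The main technical claim I would prove next is: for any simplex $T \in \mathcal{T}$ and any face $G$ of $P$, the intersection $T \cap G$ is a face of $T$. Once this is granted the lemma is immediate: given $T \in \mathcal{T}$ and $T' \in \mathcal{T}'$, we have $T \cap T' \subseteq P \cap P' = F$, hence $T \cap T' = (T \cap F) \cap (T' \cap F)$. The first factor $S := T \cap F$ is a face of $T$ contained in $F$, and by face coherence it lies in $\mathcal{T}_F$; analogously $S' := T' \cap F \in \mathcal{T}_F$. Since $\mathcal{T}_F$ is itself a valid triangulation (a property already asserted in the description of the algorithm applied to the single polytope $F$), $S \cap S'$ is a common face of $S$ and $S'$, and therefore a common face of $T$ and $T'$, as required.

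The hard part is thus the intersection claim. I would argue by induction on $\dim T$ (equivalently on $\dim F(T)$, where $F(T)$ is the unique smallest face of $P$ containing $T$). Each $T \in \mathcal{T}$ is of one of two kinds: (i) $T$ is a sub-simplex of a simplex face $F'$ of $P$, in which case $F' \cap G$ is a face of the simplex $F'$ and so $T \cap G = T \cap (F' \cap G)$ is the intersection of two faces of $F'$, hence a face of $F'$ contained in $T$ and therefore a face of $T$; or (ii) $T$ is a cone with apex $b(F(T))$ over a simplex $S \in \mathcal{T}(\partial F(T))$. In case (ii), if $G \supseteq F(T)$ then $T \cap G = T$; otherwise $G' := G \cap F(T)$ is a proper face of $F(T)$ and lies in a supporting hyperplane $\pi$ of $F(T)$. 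Since $b(F(T))$ lies in the relative interior of $F(T)$ it is strictly off $\pi$, and a direct convex-combination computation shows that any point of $T$ on $\pi$ must actually lie on $S$; hence $T \cap G = T \cap G' = S \cap G'$. The inductive hypothesis applied to $S$, whose smallest containing face sits inside $\partial F(T)$ and therefore has strictly smaller dimension, shows that $S \cap G'$ is a face of $S$, and consequently a face of the cone $T$.
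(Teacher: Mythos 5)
The paper states Lemmas~\ref{l:triang-1} and~\ref{l:triang-2} without proof, treating them as elementary consequences of the algorithm, so there is no argument in the text to measure yours against; your proof is correct and supplies the missing justification by the natural route. The two sublemmas you isolate---face coherence (the restriction of $\mathcal{T}$ to a face $F$ of $P$ equals the triangulation the algorithm produces on $F$ alone, because the faces of $F$ are exactly the faces of $P$ contained in $F$ and barycenters are intrinsic) and the intersection claim (for every $T\in\mathcal{T}$ and every face $G$ of $P$, the set $T\cap G$ is a face of $T$)---are exactly what is needed, and given them the reduction to the one-polytope assertion that $\mathcal{T}_F$ is a triangulation, already implicit in the paper's description of the algorithm, is immediate. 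Two small points to tighten in a final write-up: the word ``equivalently'' is slightly misleading since $\dim T$ and $\dim F(T)$ differ in general (an edge from a vertex of a square to its barycenter has $\dim T=1$ but $\dim F(T)=2$), though either induction goes through because in case~(ii) one has both $\dim S<\dim T$ and $\dim F(S)<\dim F(T)$; and the degenerate subcases $G\cap F(T)=\emptyset$ and $S=\emptyset$ (so $T=\{b(F(T))\}$) should be recorded, each giving $T\cap G\in\{\emptyset,T\}$ directly. The central convexity observation---any point of $\mathrm{conv}(S,b(F(T)))$ lying on the supporting hyperplane cutting out the proper face $G\cap F(T)$ must have zero barycentric weight on $b(F(T))$, since that barycenter lies strictly on the interior side, and therefore the point lies in $S$---is precisely right.
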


\subsection{Embedding convex polytopes into skeleta of refined triangulations}
\label{s:embedding-1}

In this section we prove the following proposition.

\begin{pro}\label{p:nello-scheletro}
Consider a finite family of convex $m$-dimensional polytopes $\{P_i\}$ in $\mathbb R^N$ and a triangulation $\mathcal{T}$ of some closed subset of $\mathbb R^N$. Then there exists a triangulation $\mathcal{T}_f$ finer than $\mathcal{T}$ with the property that each polytope $P_i$ is union of elements of the $m$-skeleton of $\mathcal{T}_f$. Moreover, the refinement of $\mathcal{T}$ is local in the following sense: if we denote by $\mathcal{T}'$ the collection of those $N$-dimensional simplices of $\mathcal{T}$ which intersect at least one $P_i$, then any simplex of $\mathcal{T}$ which does not intersect an element of $\mathcal{T}'$ is not refined (namely, it is also an element of $\mathcal{T}_f$).
\end{pro}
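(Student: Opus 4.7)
The plan is to split the argument into two stages: first, construct a polyhedral cell complex $\mathcal{C}$ that refines $\mathcal{T}$ and has each $P_i$ as a subcomplex; second, simplicialize $\mathcal{C}$ by applying the algorithm of Section \ref{s:polytope-subdivision} to each of its cells. Compatibility between the triangulations of adjacent cells will follow from Lemma \ref{l:triang-2}, so that the resulting $\mathcal{T}_f$ is automatically a simplicial refinement of $\mathcal{T}$ in which each $P_i$, being a union of $m$-dimensional cells of $\mathcal{C}$, becomes a union of $m$-simplices.

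First I would reduce to the case of a single convex $m$-dimensional polytope $P \subset \mathbb{R}^N$ by induction on $|\{P_i\}|$: given a refinement $\mathcal{T}^{(k-1)}$ adapted to $P_1, \ldots, P_{k-1}$, apply the single-polytope result to $P_k$ inside $\mathcal{T}^{(k-1)}$. Locality at each inductive step guarantees that previously handled polytopes remain subcomplexes, since the new refinement only touches simplices in the closed star of the $N$-simplices meeting $P_k$.

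For the single-polytope case, I would build, for each simplex $\sigma$ of $\mathcal{T}$ that meets some $N$-simplex in $\mathcal{T}'$, a polyhedral refinement $\mathcal{C}_\sigma$ enjoying (i) $P \cap \sigma$ is a subcomplex of $\mathcal{C}_\sigma$, and (ii) on each face $F$ of $\sigma$, the induced decomposition $\mathcal{C}_\sigma|_F$ depends only on $F$ and $P \cap F$. Property (ii) is the crucial \emph{boundary property}: it simultaneously ensures that refinements of adjacent $N$-simplices agree on their shared face and that the locality clause of the proposition holds, for if $\sigma$ is disjoint from every element of $\mathcal{T}'$ then $P \cap \sigma = \emptyset$ and, by induction on $\dim \sigma$, $\mathcal{C}_\sigma = \{\sigma\}$. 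I would then construct $\mathcal{C}_\sigma$ by induction on $\dim \sigma$: glue the already-built $\mathcal{C}_F$ for the faces of $\sigma$ to get a polyhedral subdivision of $\partial \sigma$; triangulate $P \cap \sigma$ itself via the algorithm of Section \ref{s:polytope-subdivision}; and then fill the remainder of $\sigma$ by an iterated coning construction from carefully chosen apex points (for instance, vertices of $P \cap \sigma$ lying in the relative interior of $\sigma$, and generic barycenters of the interior cells of $\partial \mathcal{C}_\sigma$), arranged so that no extra structure is added to those faces $F$ of $\sigma$ on which $P \cap F = \emptyset$.

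The main obstacle is precisely this last step of the inductive construction: extending the given subdivision of $\partial \sigma$ to a polyhedral subdivision of $\sigma$ in which $P \cap \sigma$ is a subcomplex, while keeping all cells convex and introducing no additional combinatorial data on boundary faces of $\sigma$ where $P$ is absent. The coning construction affords enough flexibility to achieve this, but the apex points must be placed in sufficiently generic position and, in some cases, inserted iteratively (once per interior vertex of $P \cap \sigma$) so that convexity and the boundary property are simultaneously preserved. Once $\mathcal{C}$ has been built in this way, the simplicialization step through the barycentric-type algorithm of Section \ref{s:polytope-subdivision}, combined with Lemma \ref{l:triang-2}, is routine and completes the proof.
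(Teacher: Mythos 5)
Your proposal takes a genuinely different route from the paper, and the route has a gap precisely at the step you yourself flag as the ``main obstacle.''

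The paper's proof has a different structure. For a single polytope $P$ contained in an $N$-simplex $T_1$ of $\mathcal{T}$, it writes $P$ as an intersection $\bigcap_i \pi_i \cap \bigcap_j H_j$ of hyperplanes and halfspaces and then cuts $T_1$ successively by each hyperplane of this arrangement. This produces, with essentially no work, a polyhedral subdivision of $T_1$ into convex $N$-cells in which $P$ automatically appears as an $m$-dimensional face of one of them; the algorithm of Section~\ref{s:polytope-subdivision} plus Lemmas~\ref{l:triang-1}--\ref{l:triang-2} then simplicializes $T_1$. Only \emph{after} $T_1$ is fully triangulated does the paper propagate the induced boundary subdivision \emph{outward}, dimension by dimension, to the neighbouring simplices of $\mathcal{T}$, by iterated coning from barycenters. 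The crucial point is that the outward propagation only ever has to extend a given boundary subdivision of a simplex $\sigma$ into its interior --- and $\sigma$ does not contain any piece of $P$, so a single cone from the barycenter $b(\sigma)$ suffices. The paper never has to extend a boundary subdivision of $\sigma$ inward \emph{while simultaneously} forcing an interior polytope $P\cap\sigma$ to appear as a subcomplex.

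That last task is exactly what your construction of $\mathcal{C}_\sigma$ requires. Your bottom-up induction by $\dim\sigma$ builds the boundary decomposition first and then has to fill the interior of $\sigma$ with convex cells having $P\cap\sigma$ as a subcomplex, without touching boundary faces where $P$ is absent. This is the hard step, and ``iterated coning from carefully chosen apex points in generic position'' is not a proof of it. Coning from a vertex of $P\cap\sigma$ lying in $\mathrm{int}\,\sigma$ produces cells that in general slice across $P\cap\sigma$: a second cone from another interior vertex of $P\cap\sigma$ must then be restricted to the cells containing it, one must verify that the relevant edge or higher face of $P\cap\sigma$ actually lies inside a single cell of the previous stage, that convexity is preserved, and that boundary faces of $\sigma$ disjoint from $P$ receive no new vertices --- none of which is automatic, and when $P\cap\sigma$ is, say, a quadrilateral with all vertices in $\mathrm{int}\,\sigma$ the iterated cone does not obviously keep $P\cap\sigma$ as a union of cells at all. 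The hyperplane-arrangement cut is the device that sidesteps this entirely, and it is the idea your sketch is missing. If you insist on your order of construction you would in effect have to re-derive a hyperplane-cut argument inside each $\sigma$ and then reconcile it with the prescribed boundary subdivision $\partial\mathcal{C}_\sigma$ by a further common refinement, at which point you have reproduced the paper's argument with extra steps. Everything else in your sketch --- the reduction to one polytope at a time, the use of the boundary property for locality, the final simplicialization via Section~\ref{s:polytope-subdivision} and Lemma~\ref{l:triang-2} --- is sound and parallels the paper.
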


We will give an algorithm to produce $\mathcal{T}_f$. First of all, take all possible intersections of the $P_i$'s with $N$-dimensional simplices of $\mathcal{T}$. This gives a collection of new $m$-dimensional polytopes $\bar P_j$: each of them is contained in an $N$-dimensional simplex of $\mathcal{T}$, which we denote by $T_j$.

We start with $\bar P_1$. Because it is a convex polytope of dimension $<N$, we write it as the intersection of a finite number of hyperplanes $\pi_i$ and a finite number of closed halfspaces $H_j$. Then we build a finite collection $\mathcal{H}$ of pairs of halfspaces $H_i^-$, $H_i^+$ by adding for each $H_j^+:= H_j$ the closure of its complement $H_j^-$, and for ever $\pi_i$ the pair of closed halfspaces which have $\pi_i$ as a boundary. We then subdivide $T_1$ inductively in smaller convex $N$-dimensional polytopes in the following way. In the first step we keep $T_1$ if it is contained in one of the two halfspaces $\{H_1^+, H_1^-\}$, otherwise we replace it with the pair $\{H_1^+\cap T_1, H_1^-\cap T_1\}$. At step $j$ we assume to have a finite collection of closed convex $N$-dimensional polytopes and each of them is kept if it is contained in one of the two halfspaces $H_{j+1}^+, H_{j+1}^-$, otherwise it is replaced by the two intersections with them.

The resulting collection is a partition of $T_1$ into convex polytopes with the property that any two faces of any two polytopes intersect in a common face. Moreover, the original $P_1$ is the $m$-dimensional face of some convex polytope of this partition. We apply to each of these $N$-dimensional polytopes the triangulating algorithm of Section \ref{s:polytope-subdivision} and, by Lemma \ref{l:triang-1}, we obtain a triangulation of $T_1$. However, $T_1$ has faces in common with other simplices of $\mathcal{T}$ which are not yet partitioned. In order to remedy, we proceed inductively as follows. We first denote by $\mathcal{S}_k$ be the collection of $k$-dimensional faces of $T_1$; we start with the edges $\mathcal{S}_1$ and add to $\mathcal{S}_2$ every triangle of $\mathcal{T}$ which contains an edge $\sigma\in \mathcal{S}_1$ and add it to $\mathcal{S}_2$. The new triangles are triangulated compatibly with the elements of $\mathcal{S}_1$ by adding its barycenter and connecting it with edges to all its vertices and all the new points in the edges of $\mathcal{S}_1$ that it might contain. Observe that this procedure does not subdivide any edge of the initial triangulation $\mathcal{T}$ which is not in $\mathcal{S}_1$. Similarly, at step $j$ we enlarge $\mathcal{S}_{j+1}$ with all the $j+1$-dimensional simplices which contain an element of $\mathcal{S}_j$ as a face. Each new simplex $S$ added is triangulated by considering its barycenter $b (S)$ and subdividing $S$ into the pyramids which:
\begin{itemize}
    \item have vertex $b(S)$ and basis a $j$-dimensional face $F$ of $S$, in case $F$ does not belong to $\mathcal{S}_j$;
    \item have vertex $b(S)$ and basis a $j$-dimensional simplex of the subdivision of the face $F\in \mathcal{S}_j$ obtained so far inductively if the other case;
\end{itemize}
We stop the procedure when we have subdivided the final new elements added to the collection $\mathcal{S}_N$. The final result is a triangulation.

Observe that, by construction, in the new triangulation $\mathcal{T}_1$ the polytope $P_1$ is the union of elements of the $m$-dimensional skeleton.

We now proceed inductively with the subsequent polytopes. However, observe that, at the $j+1$-th step, the simplex $T_{j+1}$ might not be an element of the triangulation $\mathcal{T}_j$. However, if that is the case, by construction there is a collection $\mathcal{C}$ of $N$-dimensional simplices of $\mathcal{T}_j$ whose union is precisely $T_{j+1}$. The first subdivision algorithm in which we intersected $T_1$ with halfspaces is in this case applied simultaneously to all of the elements of $\mathcal{C}$. This then results into a subdivision $\mathcal{S}$ of $T_{j+1}$ into convex polytopes which has the two properties of the subdivision obtained in the previous argument for $T_1$. This subdivision has however the additional feature that, for any element $C$ of $\mathcal{C}$, there is an appropriate subcollection $\mathcal{S}'$ of $\mathcal{S}$ which is in fact a subdivision of it. The remaining part of the algorithm outlined above is then applied verbatim and the result is the next triangulation $\mathcal{T}_{j+1}$. 

\subsection{Embedding polytopes in skeleta of refinements of polyhedra}\label{s:embedding-2}

In this section we extend the algorithm of the previous subsection to handle more general piecewise linear ambient closed manifolds. For simplicity we assume that the latter are suitably embedded into some higher-dimensional Euclidean space. 

\begin{definition}\label{d:polyhedra}
A finite polyhedron in $\mathbb R^N$ is the collection of finitely many simplices of $\mathbb R^N$.
\end{definition}

A finite poyhedron $K$ always admits a finite triangulation, namely a finite collection of simplices $\mathcal{T}$ with the following properties:
\begin{itemize}
\item Any face of an element of $\mathcal{T}$ belongs to $\mathcal{T}$;
\item The intersection of any two elements of $\mathcal{T}$ is always either empty or a face of both;
\item The union of the elements of $\mathcal{T}$ is $K$.
\end{itemize}
Although this is a classical fact, note that it is also a consequence of Proposition \ref{p:nello-scheletro}.

\begin{definition}\label{d:PL-manifolds}
We will consider continuous maps $f$ over finite polyhedra $K$ taking values into a smooth manifold $\mathcal{M}$. Such maps $f$ will be called piecewise smooth if there is a triangulation $\mathcal{T}$ of $K$ with the property that the restriction of $f$ to every simplex in $\mathcal{T}$ is smooth. The map will be called a piecewise smooth homeomorphism if in addition it is an homeomorphism with the image and if the triangulation can be chosen so that, for every simplex $\sigma\in \mathcal{T}$, the differential $D (f|_\sigma)$ of the restriction of $f$ to $\sigma$ has maximal rank at every point. When such a map exists between some polyhedron $K$ and some smooth compact manifold $\mathcal{M}$ without boundary, we say that $K$ is an embedded piecewise linear closed submanifold of $\mathbb R^N$.
\end{definition}

It is a classical result of Whitehead that if $f: K \to \mathcal{M}$ is a piecewise smooth homeomorphism, then every pair of triangulations of $K$ admits a further triangulation which is a common refinement of both. 

The generalization of Proposition \ref{p:nello-scheletro} that we are looking for
is then the following.

\begin{pro}\label{p:nello-scheletro-2}
Consider a polyhedron $K$ which is a piecewise linear $m+n$-dimensional submanifold of $\mathbb R^N$, let $\{P_i\}$ be a finite collection of $m$-dimensional convex polytopes all contained in $K$, and let $\mathcal{T}$ be a triangulation of $K$. Then there is a triangulation $\mathcal{T}_f$ of $K$ which refines $\mathcal{T}$ and has the property that every $P_i$ is the union of finitely many elements of the $m$-skeleton of $\mathcal{T}_f$. 
\end{pro}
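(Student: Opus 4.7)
The plan is to follow the strategy used to establish Proposition \ref{p:nello-scheletro}, adapted to the fact that $\mathcal{T}$ triangulates an $(m+n)$-dimensional piecewise linear submanifold rather than a top-dimensional subset of $\mathbb R^N$. The idea is to carry out the same algorithm one top-dimensional simplex of $\mathcal{T}$ at a time, using in each case the affine hull of the simplex in place of the ambient Euclidean space of the earlier proof. Concretely, every top-dimensional simplex $\Delta$ of $\mathcal{T}$ is an affine $(m+n)$-simplex in $\mathbb R^N$ whose affine hull $V_\Delta$ is an $(m+n)$-dimensional affine subspace, canonically identifiable with $\mathbb R^{m+n}$ after a choice of Euclidean coordinates. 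For each polytope $P_i$, the intersection $P_i \cap \Delta$ is a convex polytope of dimension at most $m$ lying inside $V_\Delta$, and the decomposition $P_i = \bigcup_\Delta (P_i \cap \Delta)$ over top-dimensional simplices is a finite union of such pieces, each contained in a single $(m+n)$-dimensional affine space.

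I would then process the polytopes $P_1, P_2, \ldots$ one at a time, and for each current $P_j$ operate on each top-dimensional simplex $\Delta$ that it meets by: (i) cutting $\Delta$ along the halfspaces in $V_\Delta$ whose defining hyperplanes either contain the affine hull of $P_j \cap \Delta$ or are extensions of its bounding hyperplanes; and (ii) triangulating the resulting convex cells by the algorithm of Section \ref{s:polytope-subdivision}, so that $P_j \cap \Delta$ becomes a union of simplices of dimension $\dim(P_j \cap \Delta)$. For the top-dimensional simplices that are not intersected by $P_j$ but whose boundaries have been subdivided through the processing of their neighbors, the subdivision is extended inward by the iterated coning-from-barycenters procedure used in the proof of Proposition \ref{p:nello-scheletro}; this procedure relies only on the affine structure of an individual simplex and carries over to our situation without any modification.

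The main (and essentially the only) point to verify is compatibility of the triangulations on faces shared between two top-dimensional simplices. For such a pair $\Delta_1, \Delta_2$ with common face $\sigma$, the cuts on $\sigma$ induced from either side during the processing of $P_j$ are determined solely by the common geometric object $P_j \cap \sigma$, and therefore coincide; the subsequent triangulation of the cut pieces is compatible across $\sigma$ by Lemmas \ref{l:triang-1} and \ref{l:triang-2}; and the coning-from-barycenters extension, being purely combinatorial inside each simplex, preserves this compatibility. These are precisely the ingredients whose consistency is already established in the proof of Proposition \ref{p:nello-scheletro}, so no genuinely new difficulty arises. Iterating over every $P_i$ yields the desired refinement $\mathcal{T}_f$ of $\mathcal{T}$ in which each $P_i$ is a union of $m$-dimensional simplices.
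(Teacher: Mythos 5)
Your proposal follows the same overall strategy as the paper: reduce to the intersections of the $P_i$'s with top-dimensional simplices, identify each simplex's affine hull with $\mathbb R^{m+n}$ so that the algorithm from Proposition \ref{p:nello-scheletro} applies verbatim, and propagate the induced subdivision to neighboring simplices by the iterated coning-from-barycenters procedure. There is, however, one genuine deviation worth flagging. The paper processes the \emph{pieces} $P_i\cap T_j$ one at a time, so that at every stage the halfspace cuts are performed only within a single original top-dimensional simplex (or within the simplices of a previous refinement that together tile such a simplex, all cut by the same halfspaces); compatibility across a face shared by two different original top simplices is therefore never achieved by matching cuts, but always by the barycentric coning extension. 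You instead process each original $P_j$ all at once, cutting every top simplex $\Delta$ that it meets simultaneously, and then assert that ``the cuts on $\sigma$ induced from either side \ldots are determined solely by the common geometric object $P_j\cap\sigma$, and therefore coincide.''

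This assertion is not quite right as stated, and it is the one step where your argument is weaker than the paper's. The cuts inside $\Delta$ are induced by hyperplanes of $V_\Delta$ that you describe as ``extensions of the bounding hyperplanes'' of $P_j\cap\Delta$; such extensions from the $m$-dimensional affine hull of $P_j\cap\Delta$ to the $(m+n)$-dimensional space $V_\Delta$ are not canonical, so the restrictions to a common face $\sigma=\Delta_1\cap\Delta_2$ of the cutting hyperplanes chosen in $V_{\Delta_1}$ need not agree with those chosen in $V_{\Delta_2}$, even though $P_j\cap\sigma$ is the same from both sides. To repair this you should fix once and for all a description of $P_j$ as a finite intersection of hyperplanes and halfspaces of the ambient $\mathbb R^N$, and cut each $\Delta$ along the restrictions to $V_\Delta$ of these \emph{global} objects; then the induced cuts on $\sigma$ are literally the restrictions of the same $\mathbb R^N$-hyperplanes and therefore agree, and Lemmas \ref{l:triang-1} and \ref{l:triang-2} give compatibility of the subsequent barycentric triangulations exactly as you say. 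With that amendment the argument goes through; the paper simply avoids the issue altogether by its ordering of the operations.
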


We quickly describe how to modify the algorithm explained in Section \ref{s:embedding-1}. As in there, we intersect the polytopes with the $m$-dimensional simplices of $\mathcal{T}$, reducing the proposition to the case in which each $P_i$ is contained in an $m+n$-dimensional simplex of $T_i$. Moreover, as in there, we refine $\mathcal{T}_0=\mathcal{T}$ into $\mathcal{T}_1$, $\mathcal{T}_2$, and so on, ``embedding'' one $P_i$ at a time. 

At the starting step the algorithm gives first a way to triangulate $T_1$ so that $P_1$ is the union of the $m$-skeleton of this local triangulation. In the argument, $T_1$ is supposed to be an $m+n$-dimensional simplex of $\mathbb R^{m+n}$, but this can be easily achieved identifying the $m+n$-dimensional affine plane $\pi$ containing $T_1$ with $\mathbb R^{m+n}$. We then further triangulate all simplices of $\mathcal{T}$ which intersect $T_1$, proceeding inductively from the lower dimensional ones. Since at each stage of this second algorithm a single simplex is considered at a time, we can think of this as also taking place in some Euclidean space. 

At the inductive step, when embedding $P_{j+1}$ into a refinement of $\mathcal{T}_j$, the only difference is that the first subdivision is carried over all at once on all the $m+n$-dimensional simplices $\mathcal{C}_j$ of $\mathcal{T}_j$ which are contained in $T_{j+1}$. Again, the only important point is that, like above, $T_{j+1}$ is an $m+n$-dimensional simplex. The second part, which refines the triangulation of $\mathcal{T}_j$ over all simplices intersecting at least one element of $\mathcal{C}_j$, is the same as in the initial step. 

\section{Cohomology operations and characteristic classes}\label{a:cohomology}
\label{s:cohomologyoperations}

In this section we collect a few results about cohomology operations, characteristic classes and the cohomology of $\BSOn$, see also \cite{MosherTangora, Spanier, MilnorStasheff, DubrovinFomenkoNovikovIII}.

\subsection{Cohomology operations}

A \emph{cohomology operation} of type $(\pi,n;\rho, m)$ is a family of functions $$\theta_X:\mathbf{H}^n(X,\pi) \rightarrow \mathbf{H}^m(X,\rho),$$ one for each space $X$, satisfying the \emph{naturality} condition $f^*\theta_Y=\theta_X\,f^*$ for any map $f:X \rightarrow Y$. The set of cohomology operations of type $(\pi,n;\rho, m)$ can be denoted by $\mathcal{O}(\pi,n;\rho, m)$. A cohomology operation $\theta$ is said to be \emph{additive} if $\theta_X$ is a homomorphism for every $X$. An important result on the classification of these operations in terms of the cohomology of Eilenberg-MacLane spaces is the following.

\begin{thm}
    There is a one-to-one correspondence $$\mathcal{O}(\pi,n;\rho, m) \rightarrow	\mathbf{H}^m(\mathbf{K}(\pi, n), \rho),$$ given by $\theta \rightarrow \theta(\iota_n)$, where $\iota_n$ is the fundamental class of $\mathbf{K}(\pi, n)$.
\end{thm}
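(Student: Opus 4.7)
The approach is a direct Yoneda-style argument based on the Hopf homotopy classification theorem, which gives the natural bijection $T \colon [X, \mathbf{K}(\pi,n)] \to \mathbf{H}^n(X,\pi)$, $[f] \mapsto f^*(\iota_n)$. The plan is to show both injectivity and surjectivity of the map $\Lambda \colon \theta \mapsto \theta_{\mathbf{K}(\pi,n)}(\iota_n)$ by using this representability result together with the naturality axiom built into the definition of a cohomology operation.

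For injectivity, I would argue as follows. Suppose $\theta, \theta' \in \mathcal{O}(\pi,n;\rho,m)$ satisfy $\theta(\iota_n) = \theta'(\iota_n)$. Given any space $X$ and any class $x \in \mathbf{H}^n(X,\pi)$, the Hopf theorem produces a map $f \colon X \to \mathbf{K}(\pi,n)$ with $f^*(\iota_n) = x$. Applying naturality twice yields
\[
\theta_X(x) = \theta_X(f^*(\iota_n)) = f^*(\theta_{\mathbf{K}(\pi,n)}(\iota_n)) = f^*(\theta'_{\mathbf{K}(\pi,n)}(\iota_n)) = \theta'_X(x),
\]
and hence $\theta = \theta'$.

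For surjectivity, given $u \in \mathbf{H}^m(\mathbf{K}(\pi,n),\rho)$, I would define a candidate cohomology operation $\theta^u$ by prescribing, for each space $X$ and each $x \in \mathbf{H}^n(X,\pi)$, a representative $f_x \colon X \to \mathbf{K}(\pi,n)$ with $f_x^*(\iota_n) = x$ and setting $\theta^u_X(x) := f_x^*(u)$. Well-definedness amounts to showing that $f_x^*(u)$ depends only on the homotopy class of $f_x$, which is guaranteed because $T$ is a bijection, so any two choices of representative are homotopic. Naturality under a map $g \colon Y \to X$ follows since $f_x \circ g$ represents $g^*(x)$, giving $\theta^u_Y(g^*(x)) = (f_x\circ g)^*(u) = g^*(\theta^u_X(x))$. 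Finally, $\theta^u(\iota_n) = u$ because one may take $f_{\iota_n} = \mathrm{id}_{\mathbf{K}(\pi,n)}$, so $\Lambda(\theta^u) = u$.

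There is no serious obstacle here beyond unwinding the Hopf classification; the real content has already been absorbed into the existence of the isomorphism $T$. The only point that requires a little care is the well-definedness in the surjectivity step, where one must ensure the representative $f_x$ is chosen consistently — but this is precisely the statement that $T$ is a bijection, so two choices differ by a homotopy and induce the same map on cohomology. Once this is noted, the two constructions $\theta \mapsto \theta(\iota_n)$ and $u \mapsto \theta^u$ are manifestly inverse to each other, completing the proof.
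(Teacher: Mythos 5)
The paper states this theorem in Appendix B without proof, as a standard background fact (it is a classical consequence of the Hopf/Brown representability of singular cohomology). Your proof is the correct and standard Yoneda-style argument, and both the injectivity and surjectivity directions are handled properly; in particular you are right to observe that well-definedness in the surjectivity step is exactly the injectivity of $T$, and that $\theta^u(\iota_n)=u$ follows by evaluating at the identity map.

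One small point deserves a flag. The paper's definition of a cohomology operation asks for a function $\theta_X$ ``one for each space $X$,'' whereas the Hopf classification theorem as the paper records it applies to connected $CW$-complexes. Your argument implicitly treats $X$ as a space to which the Hopf theorem applies. This is the usual convention (either one works in the category of $CW$-complexes throughout, or one invokes $CW$-approximation together with the homotopy invariance of singular cohomology to reduce to that case, and handles non-connected $X$ component by component), but a fully airtight write-up would say so explicitly rather than leaving the scope of ``space'' ambiguous. With that caveat acknowledged, the proof is complete and is the one any reference such as Spanier or Mosher--Tangora would give.
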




The \emph{Steenrod squares} $S q^i$, $i\geq 0$, are additive cohomology operations of type $(\mathbb{Z}_2,n;\mathbb{Z}_2,n+i)$, $$S q^i: \mathbf{H}^n(X , \mathbb{Z}_2) \rightarrow \mathbf{H}^{n+i}(X , \mathbb{Z}_2),$$ defined for all $n$ and such that
\begin{enumerate}
\item $S q^0=1$, the identity;
\item if $\text{deg }u=n$, then $Sq^nu=u \smile u$;
\item if $i> \text{deg }u$, then $Sq^iu=0$;
\item if $u,v \in \mathbf{H}^*(X, \mathbb{Z}_2)$, then $$Sq^k(u\smile v)= \sum_{i+j=k}Sq^iu \smile Sq^{\,j}v.$$ This condition is usually called \emph{Cartan formula}.
\end{enumerate}
The above properties characterize the cohomology operations $Sq^i$. It is then possible to prove existence and uniqueness of such operations, \textit{cfr.} \cite{SteenrodEpstein}. From the above properties it is possible to derive the following.
\begin{enumerate}\setcounter{enumi}{4}
\item $S q^1$ is the Bockstein homomorphism $\beta$ induced by the short exact coefficient sequence $$0 \rightarrow \mathbb{Z}_2 \rightarrow \mathbb{Z}_4 \rightarrow \mathbb{Z}_2\rightarrow 0;$$
\item if $0 < a < 2b$, then $$Sq^aSq^b=\sum^{\lfloor a/2 \rfloor}_{j=0} \binom{b-1-j}{a-2j}Sq^{a+b-j}Sq^{\,j},$$ where the binomial coefficient is taken mod 2. These relations are usually called \emph{Adem relations};
\item $Sq^i(\sigma(u))= \sigma(Sq^iu)$, where $\sigma: \mathbf{H}^n(X,\mathbb{Z}_2) \rightarrow \mathbf{H}^{n+1}(\Sigma X,\mathbb{Z}_2)$ is the suspension isomorphism given by reduced cross-product with a generator of $\mathbf{H}^1(S^1,\mathbb{Z}_2)$ and $\Sigma X$ the reduced suspension of $X$.
\end{enumerate}
This last property says that the Steenrod squares are \emph{stable} operations, \textit{i.e.} they commute with the cohomology suspension operation. 
\medskip

There are analogous additive operations for odd primary coefficients: the \emph{reduced Steenrod $p^{\text{th}}$-powers} $\mathcal{P}^i$ of type $\mathcal{O}(\mathbb{Z}_p, n; \mathbb{Z}_p, n+2i(p-1))$ for $p$ an odd prime\footnote{Sometimes the notation $\mathcal{P}^i_p$ is used to highlight the coefficient group $\mathbb{Z}_p$.} and written $$\mathcal{P}^i: \mathbf{H}^n(X, \mathbb{Z}_p) \rightarrow \mathbf{H}^{n+ 2i(p-1)}(X, \mathbb{Z}_p).$$ 
They satisfy the following properties.
\begin{enumerate}
\item $\mathcal{P}^0=1$, the identity;
\item if $\text{deg }u=2i$, then $\mathcal{P}^iu=u \smile \cdots \smile u$, $p$ times;
\item if $2i> \text{deg }u$, then $\mathcal{P}^iu=0$;
\item if $u,v \in \mathbf{H}^*(X, \mathbb{Z}_p)$, then $$\mathcal{P}^k(u\smile v)= \sum_{i+j=k}\mathcal{P}^iu \smile \mathcal{P}^{\,j}v.$$ This is the \emph{Cartan formula};
\end{enumerate}

In analogy with Steenrod squares, the above properties characterize the cohomology operations $\mathcal{P}^i$: it is then possible to prove existence and uniqueness of such operations, \textit{cfr.} \cite{SteenrodEpstein}. From the above properties it is possible to derive the following.

\begin{enumerate}\setcounter{enumi}{4}
\item $\mathcal{P}^i(\sigma(u))= \sigma(\mathcal{P}^iu)$, where $\sigma: \mathbf{H}^n(X,\mathbb{Z}_p) \rightarrow \mathbf{H}^{n+1}(\Sigma X,\mathbb{Z}_p)$ is the suspension isomorphism given by reduced cross-product with a generator of $\mathbf{H}^1(S^1,\mathbb{Z}_p)$;
\item if $a < pb$, then $$\mathcal{P}^a\mathcal{P}^b=\sum^{\lfloor a/p \rfloor}_{j=0} (-1)^{a+j}\binom{(p-1)(b-j)-1}{a-pj}\mathcal{P}^{a+b-j}\mathcal{P}^{\,j}.$$ These are the \emph{Adem relations}.
\end{enumerate}
We note that by the Adem relations, the operation $Sq^{2i+1}$ is the same as the composition $Sq^1Sq^{2i}=\beta Sq^{2i}$, so that $Sq^{2i}$ can be understood as $\mathcal{P}^i$ for $p=2$.

Another additive cohomology operation for odd primary coefficients is the \emph{Bockstein homomorphism} $\beta_p$ of type $\mathcal{O}(\mathbb{Z}_p, n; \mathbb{Z}_p, n+1)$ for $p$ an odd prime and written $$\beta_p: \mathbf{H}^n(X, \mathbb{Z}_p) \rightarrow \mathbf{H}^{n+1}(X, \mathbb{Z}_p),$$ which is obtained from the short exact coefficient sequence $0 \rightarrow \mathbb{Z}_p \rightarrow \mathbb{Z}_{p^2} \rightarrow \mathbb{Z}_p \rightarrow 0.$

Composition endows the set of stable cohomology operations with a natural ring structure: this ring is known as the \emph{Steenrod algebra} and usually denoted by $\mathcal{A}_p$. The Steenrod algebra $\mathcal{A}_2$ is defined to be the the algebra over $\mathbb{Z}_2$ that is the quotient of the algebra of polynomials in the noncommuting variables $Sq^i$, $i\geq 1$, by the two-sided ideal generated by the Adem relations. Analogously, the Steenrod algebra $\mathcal{A}_p$ for odd $p$ is defined to be the algebra over $\mathbb{Z}_p$ formed by polynomials in the noncommuting variables $\beta_p, \mathcal{P}^i$, $i\geq 1$, modulo the Adem relations and the relations $\beta_p^2=0$. Thus, for every space $X$, $\mathbf{H}^*(X,\mathbb{Z}_p)$ is a module over $\mathcal{A}_p$ for all primes $p$; the Steenrod algebra is a graded algebra with the elements of degree $i$ being those that map $\mathbf{H}^n(X,\mathbb{Z}_p)$ to $\mathbf{H}^{n+i}(X,\mathbb{Z}_p)$ for all $n$. It is possible to prove that $\mathcal{A}_2$ is generated as an algebra by the elements $Sq^{2^{k}}$ and $\mathcal{A}_p$ for $p$ odd prime is generated by $\beta_p$ and the elements $\mathcal{P}^{p^k}$.

More generally, let $R$ be a commutative ring with unit. We recall that on the category of free chain complexes $C$ over $R$ and short exact sequences of $R$ modules \begin{equation}\label{e:BocksteinC}0\rightarrow G' \xrightarrow{\varphi} G \xrightarrow{\psi} G''\rightarrow 0\end{equation} there is a functorial connecting homomorphism $$\beta^*:  \mathbf{H}^*(C, G'') \rightarrow \mathbf{H}^*(C, G')$$ of degree 1 and a functorial exact sequence $$\dots \rightarrow \mathbf{H}^n(C, G') \xrightarrow{\varphi^*} \mathbf{H}^n(C, G) \xrightarrow{\psi^*} \mathbf{H}^n(C, G'') \xrightarrow{\beta^*} \mathbf{H}^{n+1}(C, G')\rightarrow \dots,$$ \textit{cfr.} \cite[Theorem 4.5.11]{Spanier}. The connecting homomorphism $\beta^*$ (sometimes just denoted $\beta$ when the coefficient group is clear from the context) is called the Bockstein cohomology homomorphism corresponding to the coefficient sequence \eqref{e:BocksteinC}.

\subsection{Characteristic classes}

We define \emph{Stiefel-Whitney cohomology classes} of a vector bundle axiomatically. For a proof of existence and uniqueness of cohomology classes satisfying these 4 axioms we refer to \cite{MilnorStasheff}. 

\begin{enumerate}
\item To each real vector bundle $\xi$ with base space $B(\xi)$ there corresponds a sequence of cohomology classes $$
w_{i}(\xi) \in \mathbf{H}^{i}(B(\xi) , \mathbb{Z}_2), \quad i=0,1,2, \ldots,$$ called the \emph{Stiefel-Whitney classes} of $\xi$. The class $w_0(\xi)$ is equal to the unit element $$1 \in \mathbf{H}^0(B(\xi),\mathbb{Z}_2)$$ and $w_{i}(\xi)$ equals zero for $i>n$ if $\xi$ is an $n$-plane bundle.

\item (Naturality) If $f: B(\xi) \rightarrow B(\eta)$ is covered by a bundle map from $\xi$ to $\eta$, then $$w_{i}(\xi)=f^* w_{i}(\eta).$$

\item (Whitney Product Theorem) If $\xi$ and $\eta$ are vector bundles over the same base space, then the Stiefel-Whitney class of a direct sum is the cup product of the summands' classes $$w_{k}(\xi \oplus \eta)=\sum_{i=0}^{k} w_{i}(\xi) \smile w_{k-i}(\eta).$$

\item For the canonical line bundle\footnote{Let $E(\gamma_n^1)$ be the subset of $\mathbb{R}\mathbb{P}(n) \times \mathbb{R}^{n+1}$ consisting of all pairs $(\{\pm x\},v)$ such that the vector $v$ is a multiple of $x$. Recall that the \emph{canonical line bundle} over the real projective space $\mathbb{R}\mathbb{P}(n)$ is the vector bundle $\pi: E(\gamma_n^1) \rightarrow \mathbb{R}\mathbb{P}(n)$ defined by $\pi(\{\pm x\},v)=\{\pm x\}$. Thus each fiber $\pi^{-1}(\{\pm x\})$ can be identified with the line through $x$ and $-x$ in $\mathbb{R}^{n+1}$; each such line is to be given the usual vector space structure.} $\gamma_1^1$ over $\mathbb{R}\mathbb{P}(1)$, the Stiefel-Whitney class $w_1(\gamma_1^1)$ is non-zero.
\end{enumerate}

\begin{pro}\label{p:stiefelsteenrod} Stiefel-Whitney classes $w_i(\xi) \in \mathbf{H}^i(B)$ can be characterized in terms of the Steenrod operations by showing the following equality: $$w_i(\xi)=\Phi^{-1}Sq^i\Phi(1)=\Phi^{-1}Sq^iu,$$ where $\Phi$ is the Thom isomorphism and $u \in \widetilde{\mathbf{H}}^n(T (\xi), \mathbb{Z}_2)$ is the Thom class of $\xi$.\end{pro}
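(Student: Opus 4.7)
The plan is to verify that the classes $\widetilde w_i(\xi) := \Phi^{-1}(Sq^i u)$ satisfy the four axioms characterizing Stiefel-Whitney classes, and then invoke uniqueness. First I would check the dimension axioms: $\widetilde w_0(\xi) = \Phi^{-1}(Sq^0 u) = \Phi^{-1}(u) = 1$ because $\Phi(1) = \pi^*(1) \smile u = u$; for $i > n = \mathrm{rk}\,\xi$ we have $Sq^i u = 0$ since $Sq^i$ vanishes on classes of degree $< i$ and $\deg u = n$. Naturality of $\widetilde w_i$ follows from three facts: a bundle map $\xi \to \eta$ covering $f : B(\xi) \to B(\eta)$ induces a map of Thom spaces $Tf : T(\xi) \to T(\eta)$ with $(Tf)^* u_\eta = u_\xi$ (naturality/uniqueness of the mod 2 Thom class), Steenrod squares commute with $(Tf)^*$, and the Thom isomorphism is natural in the pair $(\xi, Tf)$. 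Combining these yields $\widetilde w_i(\xi) = f^* \widetilde w_i(\eta)$.

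The Whitney product formula is the main substantive step. I would use that for bundles $\xi, \eta$ over a common base $B$, pulled back from the factors of $B \times B$, the Thom space of $\xi \oplus \eta$ is (up to homotopy) the smash product $T(\xi) \wedge T(\eta)$, and the Thom class satisfies $u_{\xi \oplus \eta} = u_\xi \times u_\eta$ under the cross product pairing. The Cartan formula, applied to the external cross product, gives
\[
Sq^k(u_\xi \times u_\eta) = \sum_{i + j = k} (Sq^i u_\xi) \times (Sq^j u_\eta).
\]
Applying $\Phi_{\xi \oplus \eta}^{-1}$ and using the multiplicative compatibility $\Phi_{\xi \oplus \eta}^{-1}(a \times b) = \Phi_\xi^{-1}(a) \smile \Phi_\eta^{-1}(b)$ (itself a consequence of $\pi_{\xi \oplus \eta}^* = \pi_\xi^* \smile \pi_\eta^*$ combined with the product structure on Thom classes) produces the Whitney sum formula $\widetilde w_k(\xi \oplus \eta) = \sum_{i+j=k} \widetilde w_i(\xi) \smile \widetilde w_j(\eta)$. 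The pullback along the diagonal then gives the general statement.

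Finally, for the normalization axiom, I would specialize to the tautological line bundle $\gamma^1_1$ over $\mathbb{RP}(1)$. Since $\deg u = 1$, property (2) of the Steenrod squares gives $Sq^1 u = u \smile u$. It is a classical computation that $T(\gamma^1_1) \simeq \mathbb{RP}(2)$, with $u \in \widetilde{\mathbf{H}}^1(\mathbb{RP}(2), \mathbb{Z}_2)$ the generator; hence $u \smile u$ is the nonzero generator of $\mathbf{H}^2(\mathbb{RP}(2), \mathbb{Z}_2)$, so $\widetilde w_1(\gamma^1_1) = \Phi^{-1}(u \smile u) \neq 0$. Invoking the uniqueness of classes satisfying the four Stiefel-Whitney axioms, we conclude $\widetilde w_i(\xi) = w_i(\xi)$ for all $\xi$ and all $i$.

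The principal obstacle is the Whitney product step, which rests on carefully assembling the compatibilities between Thom isomorphisms of $\xi$, $\eta$, and $\xi \oplus \eta$ with the external cross product on cohomology; the other three axioms reduce to direct applications of the defining properties of $Sq^i$ listed in the previous subsection.
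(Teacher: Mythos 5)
Your proposal is correct. Note that the paper does not actually prove Proposition~\ref{p:stiefelsteenrod}; it states the identity and refers the reader to Thom's thesis \cite{Thomdottorato} (the result also appears as \cite[Theorem 8.8]{MilnorStasheff}), so there is no ``paper proof'' to compare against in detail. Your argument is the standard one from that literature: define $\widetilde w_i(\xi) := \Phi^{-1}(Sq^i u)$, verify the four Stiefel--Whitney axioms, and conclude by uniqueness. Each step holds up: the rank axiom uses $Sq^0=\mathrm{id}$ together with $Sq^i$ vanishing above the degree of its argument; naturality combines uniqueness of the mod~$2$ Thom class, naturality of $Sq^i$, and naturality of $\Phi$; the Whitney sum formula is proved first for an external product $\xi\times\eta$ over $B\times B$ via $T(\xi\times\eta)\simeq T(\xi)\wedge T(\eta)$, $u_{\xi\times\eta}=u_\xi\times u_\eta$, the external Cartan formula, and multiplicativity of $\Phi$, and then pulled back along the diagonal; and the normalization axiom uses $Sq^1 u = u\smile u$ (since $\deg u = 1$) together with the identification $T(\gamma_1^1)\simeq\mathbb{RP}(2)$, where the Thom class is the degree-one generator and its square is nonzero. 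One small point worth making explicit in a written-up version: the external Cartan formula you invoke, $Sq^k(a\times b)=\sum_{i+j=k}(Sq^i a)\times(Sq^j b)$, follows from the internal Cartan formula (item (4) in the paper's list) by writing $a\times b = p_1^*a\smile p_2^*b$ and using naturality of $Sq^i$ under the projections $p_1,p_2$.
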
 This shows that $w_i(\xi)$ is the unique cohomology class in $\mathbf{H}^i(B)$ such that $\Phi(w_i(\xi))= \pi^*w_i(\xi)\smile u$ is equal to $Sq^i\Phi(1)=Sq^iu$, \textit{cfr.} also \cite{Thomdottorato}.

All the discussion about Stiefel-Whitney classes works analogously for complex vector bundles, except that for complex vector bundles all the cohomology classes belong to $\mathbb{Z}$ coefficient cohomology: they are called the \emph{Chern classes}. One possible way to define Chern classes is the following.

There is a unique sequence of functions $c_0, c_1, c_2, \cdots$ assigning to each complex vector bundle $\omega$ with $E \xrightarrow{\pi} B$ a class $c_i(\omega) \in \mathbf{H}^{2 i}(B , \mathbb{Z})$, depending only on the isomorphism type of $\omega$, such that:
\begin{enumerate}
\item $c_0(\omega)$ equals the unit element $1 \in \mathbf{H}^0(B,\mathbb{Z})$ and $c_i(\omega)=0$ for $i>n$ if $\omega$ a complex $n$-plane bundle;
\item $c_i(f^*(\omega))=f^*(c_i(\omega))$, for a pull-back bundle $f^*(\omega)$;
\item if $\omega_1$ is a complex $n$-plane bundle and $\omega_2$ a complex $m$-plane bundle, then $$c_k(\omega_1 \oplus \omega_2)=\sum_{i=0}^k c_i(\omega_1) \smile c_{k-i}(\omega_2);$$
\item for the canonical line bundle $\omega$ with $E \xrightarrow{\pi} \mathbb{CP}(1)$, $c_1(E)$ is a generator of $\mathbf{H}^2(\mathbb{CP}(1), \mathbb{Z})$ specified in advance.
\end{enumerate}

We now define the \emph{Pontryagin classes} $p_i(\xi) \in \mathbf{H}^{4 i}(B , \mathbb{Z})$ associated to a $n$-plane bundle $\xi$ in terms of Chern classes. For an $n$-plane bundle $\xi$ with $E \rightarrow B$, its complexification is the complex $n$-plane bundle $\xi^{\mathbb{C}}$ with $E^{\mathbb{C}} \rightarrow B$ obtained from the real $n$-plane bundle $\xi \oplus \xi$ by defining scalar multiplication by the complex number $i$ in each fiber $\mathbb{R}^n \oplus \mathbb{R}^n$ via the rule $i(x, y)=(-y, x)$. Thus, each fiber $\mathbb{R}^n$ of $\xi$ becomes a fiber $\mathbb{C}^n$ of $\xi^{\mathbb{C}}$. The Pontryagin class $p_i(\xi)$ is then defined to be $$p_i(\xi):=(-1)^i c_{2 i}(\xi^{\mathbb{C}}) \in \mathbf{H}^{4 i}(B , \mathbb{Z}).$$

Let $\xi$ be an oriented (real) $n$-plane bundle $E \xrightarrow{\pi} B$ and consider the restriction homomorphism $\widetilde{\mathbf{H}}^*(T (\xi), \mathbb{Z}) \rightarrow \mathbf{H}^*(E, \mathbb{Z})$ induced by the inclusion and denoted as $y \mapsto y_{|E}$. In particular, applying this homomorphism to the Thom class $u \in \widetilde{\mathbf{H}}^n(T (\xi), \mathbb{Z})$, we obtain a new cohomology class $$u_{|E} \in \mathbf{H}^n(E,\mathbb{Z}).$$ Recalling that $\mathbf{H}^n(E,\mathbb{Z})$ is canonically isomorphic to $\mathbf{H}^n(B,\mathbb{Z})$, we can define the \emph{Euler class} of an $n$-plane bundle $\xi$ as the cohomology class $$e(\xi)\in \mathbf{H}^n(B,\mathbb{Z})$$ corresponding to $u_{|E}$ under the isomorphism $\pi^*:\mathbf{H}^n(B,\mathbb{Z}) \rightarrow \mathbf{H}^n(E,\mathbb{Z})$.





\subsection{Cohomology of $\BSOn$}

In this subsection we describe the mod $p$ cohomology of the classifying space for oriented $n$-plane bundles $\BSOn$.

We recall that the mod 2 cohomology of $\BSOn$ can be computed as follows, \textit{cfr.} \cite[Theorem 12.4]{MilnorStasheff}.
\begin{pro}\label{p:BSOn_mod2}
    The cohomology $\mathbf{H}^*(\BSOn,\mathbb{Z}_2)$ is a polynomial algebra over $\mathbb{Z}_2$, freely generated by the Stiefel-Whitney classes $w_2(\univob), \dots, w_n(\univob).$
\end{pro}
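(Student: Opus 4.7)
My plan is to deduce the computation of $\mathbf{H}^*(\BSOn, \mathbb{Z}_2)$ from the known computation of $\mathbf{H}^*(\mathbf{BO}(n), \mathbb{Z}_2) = \mathbb{Z}_2[w_1, w_2, \ldots, w_n]$ by exploiting the fact that $\BSOn$ is the orientation double cover of $\mathbf{BO}(n)$. More precisely, the inclusion $SO(n) \hookrightarrow O(n)$ of index two induces a principal $\mathbb{Z}_2$-bundle $\pi : \BSOn \to \mathbf{BO}(n)$ which may be identified with the sphere bundle $S(L)$ of the real line bundle $L$ over $\mathbf{BO}(n)$ classifying the orientation obstruction, namely the line bundle whose first Stiefel-Whitney class (equivalently, mod-2 Euler class) is exactly the universal class $w_1 \in \mathbf{H}^1(\mathbf{BO}(n), \mathbb{Z}_2)$.

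Next I would apply the Gysin sequence of this $S^0$-bundle, which reads
\[
\cdots \to \mathbf{H}^{k-1}(\mathbf{BO}(n), \mathbb{Z}_2) \xrightarrow{\,\cdot\, w_1\,} \mathbf{H}^{k}(\mathbf{BO}(n), \mathbb{Z}_2) \xrightarrow{\,\pi^*\,} \mathbf{H}^{k}(\BSOn, \mathbb{Z}_2) \to \mathbf{H}^{k}(\mathbf{BO}(n), \mathbb{Z}_2) \xrightarrow{\,\cdot\, w_1\,} \cdots
\]
Since $\mathbf{H}^*(\mathbf{BO}(n), \mathbb{Z}_2)$ is a polynomial algebra on the variables $w_1, \ldots, w_n$, multiplication by $w_1$ is injective (polynomial rings have no zero-divisors). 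The long exact sequence therefore splits into short exact sequences
\[
0 \to \mathbf{H}^{k-1}(\mathbf{BO}(n), \mathbb{Z}_2) \xrightarrow{\,\cdot\, w_1\,} \mathbf{H}^{k}(\mathbf{BO}(n), \mathbb{Z}_2) \xrightarrow{\,\pi^*\,} \mathbf{H}^{k}(\BSOn, \mathbb{Z}_2) \to 0,
\]
which collated over all $k$ identify $\mathbf{H}^*(\BSOn, \mathbb{Z}_2)$ with the quotient ring $\mathbb{Z}_2[w_1, \ldots, w_n]/(w_1) = \mathbb{Z}_2[w_2, \ldots, w_n]$.

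Finally, I would invoke the naturality axiom of the Stiefel-Whitney classes: the pullback $\pi^* \gamma^n$ of the universal $n$-plane bundle $\gamma^n$ over $\mathbf{BO}(n)$ is, up to orientation-preserving isomorphism, the universal oriented bundle $\univob$, so $\pi^* w_i(\gamma^n) = w_i(\univob)$. Consequently the quotient map in the short exact sequence sends the polynomial generators $w_2, \ldots, w_n \in \mathbf{H}^*(\mathbf{BO}(n), \mathbb{Z}_2)$ precisely to the Stiefel-Whitney classes $w_2(\univob), \ldots, w_n(\univob)$ in $\mathbf{H}^*(\BSOn, \mathbb{Z}_2)$, establishing the claim. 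The only subtle point in this argument is the identification of the orientation double cover with the sphere bundle of a real line bundle whose mod-2 Euler class is $w_1$, but this is standard and follows from the fact that $w_1$ classifies real line bundles over paracompact spaces; with this in hand, the computation is a formal consequence of the Gysin sequence.
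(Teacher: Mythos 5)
Your proof is correct. The paper itself offers no argument for this proposition---it simply cites \cite[Theorem 12.4]{MilnorStasheff}---and your Gysin sequence computation is precisely the standard proof found there: the double cover $\pi:\BSOn\to\mathbf{BO}(n)$ is the sphere bundle of the determinant line bundle of $\gamma^n$, whose mod-$2$ Euler class is the universal $w_1$; since $\cup\, w_1$ is injective on the integral domain $\mathbb{Z}_2[w_1,\ldots,w_n]$, the Gysin sequence splits into short exact sequences exhibiting $\pi^*$ as a surjective ring map with kernel the ideal $(w_1)$, and naturality of Stiefel--Whitney classes identifies the images of $w_2,\ldots,w_n$ with $w_2(\univob),\ldots,w_n(\univob)$. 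No gaps; the only implicit point worth being aware of is that the Gysin sequence for the universal bundle is obtained as a limit over the finite Grassmannians $\widetilde{G}_n(\mathbb{R}^{n+k})\to G_n(\mathbb{R}^{n+k})$, but this is routine.
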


The cohomology ring of $\BSOn$ with coefficients in an odd prime $p$ has the following structure, \textit{cfr.} \cite[Theorem 15.9]{MilnorStasheff}.

\begin{pro}\label{p:BSOn_modp}
If $\Lambda$ is an integral domain containing $1/2$, then the cohomology ring $$\mathbf{H}^*(\mathbf{BSO}(2n+1), \Lambda)$$ is a polynomial ring over $\Lambda$ generated by the Pontrjagin classes $p_1(\widetilde{\gamma}^{2n+1}), \ldots, p_n(\widetilde{\gamma}^{2n+1}).$ Similarly, $$\mathbf{H}^*(\mathbf{BSO}(2n), \Lambda)$$ is a polynomial ring over $\Lambda$ generated by the Pontrjagin classes $p_1(\widetilde{\gamma}^{2n}), \ldots, p_{n-1}(\widetilde{\gamma}^{2n})$ and the Euler class $e(\widetilde{\gamma}^{2n})$.
\end{pro}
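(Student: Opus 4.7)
The plan is to induct on $n$, using the Gysin sequence of the unit sphere bundle $S(\widetilde{\gamma}^n)\to\mathbf{BSO}(n)$ associated to the universal oriented $n$-plane bundle, following the classical strategy of Milnor--Stasheff. The key geometric input is that $S(\widetilde{\gamma}^n)$ is homotopy equivalent to $\mathbf{BSO}(n-1)$: indeed the map $(V,v)\mapsto v^{\perp}\cap V$ is a fibration with contractible fibres (the unit sphere of an infinite-dimensional orthogonal complement), and under this identification $\widetilde{\gamma}^n$ restricts to $\widetilde{\gamma}^{n-1}\oplus\varepsilon$ with $\varepsilon$ a canonically trivialized line bundle (spanned by the tautological section $(V,v)\mapsto v$). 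In particular the pullback satisfies $\pi^{*}p_{j}(\widetilde{\gamma}^n)=p_{j}(\widetilde{\gamma}^{n-1})$ for every $j$. The base case $n=1$ is trivial, since $\mathbf{BSO}(1)$ is a point.

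Suppose first that $n=2k$ and that the result is known for $\mathbf{BSO}(2k-1)$, so that by induction $\mathbf{H}^{*}(\mathbf{BSO}(2k-1),\Lambda)=\Lambda[p_1,\ldots,p_{k-1}]$. Then $\pi^{*}$ hits every polynomial generator and is therefore surjective; the Gysin sequence degenerates into short exact sequences
\[
0\to \mathbf{H}^{i-2k}(\mathbf{BSO}(2k),\Lambda)\xrightarrow{\,\smile e\,} \mathbf{H}^{i}(\mathbf{BSO}(2k),\Lambda)\xrightarrow{\,\pi^{*}\,} \mathbf{H}^{i}(\mathbf{BSO}(2k-1),\Lambda)\to 0,
\]
with $e:=e(\widetilde{\gamma}^{2k})$. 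A filtration by powers of $e$, combined with the injectivity of $\smile e$, produces lifts of the Pontryagin generators together with $e$ itself that are algebraically independent, yielding $\mathbf{H}^{*}(\mathbf{BSO}(2k),\Lambda)=\Lambda[p_1,\ldots,p_{k-1},e]$.

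Suppose now that $n=2k+1$ and the result is known for $\mathbf{BSO}(2k)$. The crucial arithmetic input is that $2\,e(\widetilde{\gamma}^{2k+1})=0$: reversing the orientation of any real oriented bundle of odd rank negates its Euler class while leaving the underlying oriented isomorphism class unchanged. Since $2\in\Lambda^{\times}$ by hypothesis, $e(\widetilde{\gamma}^{2k+1})=0$ in $\Lambda$-cohomology, so the Gysin sequence collapses to
\[
0\to \mathbf{H}^{i}(\mathbf{BSO}(2k+1),\Lambda)\xrightarrow{\,\pi^{*}\,} \mathbf{H}^{i}(\mathbf{BSO}(2k),\Lambda)\xrightarrow{\,\pi_{*}\,} \mathbf{H}^{i-2k}(\mathbf{BSO}(2k+1),\Lambda)\to 0.
\]
The inductive hypothesis identifies the middle term with $\Lambda[p_1,\ldots,p_{k-1},e']$, where $e':=e(\widetilde{\gamma}^{2k})$. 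Invoking the classical identity $p_{k}(\widetilde{\gamma}^{2k})=(e')^{2}$ for the top Pontryagin class of an oriented $2k$-plane bundle, one sees that the image of $\pi^{*}$ is exactly the polynomial subring $\Lambda[p_1,\ldots,p_{k-1},(e')^{2}]$, with $p_{k}(\widetilde{\gamma}^{2k+1})$ mapped to $(e')^{2}$. Since $\Lambda[p_1,\ldots,p_{k-1},e']$ is free over that subring with basis $\{1,e'\}$ (which matches the degree shift $+2k$ in the sequence), the injectivity of $\pi^{*}$ identifies $\mathbf{H}^{*}(\mathbf{BSO}(2k+1),\Lambda)$ with the polynomial ring $\Lambda[p_1,\ldots,p_{k}]$.

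The delicate point of the plan will be the odd case, whose correctness rests on two standard but nontrivial inputs about characteristic classes of real oriented vector bundles: the vanishing $2e=0$ for odd-rank oriented bundles (where the hypothesis $1/2\in\Lambda$ enters in an essential way) and the relation $p_{k}=e^{2}$ for oriented $2k$-plane bundles. Both can be imported from \cite{MilnorStasheff}; neither is geometric measure theoretic and neither interacts with the rest of the paper, so we plan to cite them rather than reprove them.
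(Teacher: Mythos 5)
The paper does not prove this proposition; it simply cites \cite[Theorem 15.9]{MilnorStasheff}. Your proof correctly reproduces the standard Gysin-sequence induction used in that reference, with the right identifications (sphere bundle of $\widetilde{\gamma}^{n}$ homotopy equivalent to $\mathbf{BSO}(n-1)$, vanishing of $2e$ in odd rank where $1/2\in\Lambda$ is used, and the relation $p_k=e^2$ in rank $2k$), so it takes the same approach as the cited source.
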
 

That is, for every value of $n$, even or odd, the ring $\mathbf{H}^*(\BSOn, \Lambda)$ is generated by the characteristic classes $p_1, \ldots, p_{\lfloor n / 2 \rfloor}$, and $e$. These generators are subject only to the relations $e = 0$ for $n$ odd and $e^2=p_{n/2}$ for $n$ even.

\newpage
\section{List of symbols}\label{s:legenda}
Here we list some notations of the article:
\vspace{0.5cm}

\begin{tabular}{ll}


    $\mathcal{K}^j$ & $j$-skeleton of the triangulation $\mathcal{K}$;\\

    $B_{\delta}(A)$ & $\delta$-neighborhood of $A$, \textit{i.e.} $\{x: \dist\{x,A\}< \delta\}$;\\

    $V_{\delta}(\mathcal{K}^j)$ & neighborhood of $\mathcal{K}^j$ defined in Subsection \ref{s:subsmoothingneighborhood};\\

    $U_{\delta}(\mathcal{K}^j)$ & neighborhood of $\mathcal{K}^j$ with smooth boundary defined in Lemma \ref{l:spigoli-allisciati};\\

    $\supp(T)$ & support of the current $T$;\\





    $\mathbf{BSO}(n)$ & classifying space for oriented $n$-plane bundles;\\

    $\widetilde{\gamma}^n$ & universal oriented $n$-plane bundle over $\mathbf{BSO}(n)$;\\

    $T(\widetilde{\gamma}^n)$ & Thom space of the universal oriented $n$-plane bundle;\\

    $\mathbf{K}(\pi, n)$ & Eilenberg-MacLane space of type $(\pi,n)$;\\

    $Sq^i$ & Steenrod squares;\\

    $\mathcal{P}^i_p$ & Steenrod reduced $p^{\text{th}}$ power for odd prime $p$;\\

    $\beta_p$ & Bockstein homomorphism for odd prime $p$;\\

    $w_i$ & $i^{\text{th}}$ Stiefel-Whitney class of $\widetilde{\gamma}^n$;\\

    $p_i$ & $i^{\text{th}}$ Pontrjagin class of $\widetilde{\gamma}^n$;\\

    $e$ & Euler class of $\widetilde{\gamma}^n$;\\

    $St_p^{2r(p-1)+1}$ & Thom's Steenrod powers defined after Remark \ref{r:St_pi};\\

     $\widetilde{\Omega}_r$ & $r$-dimensional oriented cobordism group.\\

\end{tabular}

\newpage

\newcommand{\Addresses}{{
  \bigskip
  \footnotesize

  \textsc{Department of Mathematics, Princeton University, Princeton NJ 08544, USA.}\\
  \textit{E-mail address:} \texttt{browder@math.princeton.edu}

  \textsc{Department of Mathematics, Universit\`a di Trento, 38123 Trento, Italy.}\\
  \textit{E-mail address:} \texttt{gianmarco.caldini@unitn.it}

  \textsc{School of Mathematics, Institute for Advanced Study, Princeton NJ 08540, USA.}\\
  \textit{E-mail address:} \texttt{camillo.delellis@ias.edu}

}}

\Addresses


\begin{thebibliography}

\bibitem{Allard72}
W. Allard,
\newblock {\em On the first variation of a varifold},
\newblock in ``Annals of Mathematics", \textbf{95} (3), 417--491, 1972.

\bibitem{Almgren66}
F. J. Almgren,
\newblock {\em Some interior regularity theorems for minimal surfaces and an extension of Bernstein’s theorem},
\newblock  in ``Annals of Mathematics", \textbf{84} (2), 277--292, 1966.

\bibitem{Almgren83}
F. J. Almgren,
\newblock {\em Q-valued functions minimizing Dirichlet's integral and the regularity of area minimizing rectifiable currents up to codimension two},
\newblock in ``Bulletin of the American Mathematical Society (N.S.)", \textbf{8} (2), 327--328, 1983.



\bibitem{Almgren90-93}
F. J. Almgren,
\newblock {\em Questions and Answers about Area-Minimizing Surfaces and Geometric Measure Theory}, proceedings of Differential Geometry: Partial Differential Equations on Manifolds  (Los Angeles, CA July 8-28 1990)
\newblock in ``Proceedings of Symposia in Pure Mathematics", \textbf{54} (1), 255--259, American Mathematical Society, Providence 1993.

\bibitem{Almgren00}
F. J. Almgren,
\newblock {\em Almgren’s Big Regularity Paper}, World Scientific Monograph Series
in Mathematics,
\newblock \textbf{1}, World Scientific Publishing Co. Inc., River Edge 2000.

\bibitem{AlmgrenBrowder88-91}
F. J. Almgren, W. Browder,
\newblock {\em Homotopy with holes and minimal surfaces}, proceedings of Differential Geometry: A Symposium in Honour of Manfredo Do Carmo (Rio de Janeiro, Brazil August 1988)
\newblock in ``Pitman monographs and surveys in pure and applied mathematics", \textbf{52}, 15--24, Longman Scientific \& Technical, Essex 1991.




\bibitem{Benedetti}
R. Benedetti,
\newblock {\em Lectures on Differential Topology}, Graduate studies in mathematics, \textbf{218},
\newblock American Mathematical Society, Providence 2021.

\bibitem{Hankecycles}
C. Bohr, B. Hanke, D. Kotschick,
\newblock {\em Cycles, submanifolds, and structures on normal bundles},
\newblock in ``manuscripta mathematica", \textbf{108}, 483--494, 2002.


\bibitem{ButtazzoBelloni95}
G. Buttazzo, M. Belloni,
\newblock {\em A Survey on Old and Recent Results about the Gap Phenomenon in the Calculus of Variations},
\newblock in ``Mathematics and Its Applications", \textbf{331}, 1--27, 1995.

\bibitem{DeGiorgi61}
E. De Giorgi,
\newblock {\em Frontiere orientate di misura minima},
\newblock in {\em Seminario di Matematica della Scuola Normale Superiore di Pisa A.A. 1960-1961}, Editrice Tecnico Scientifica, Pisa 1961, 1--56.

\bibitem{dlsQ}
C. De Lellis and E. Spadaro,
\newblock {\em $Q$-valued functions revisited},
\newblock in ``Memoirs of the American Mathematical Society", \textbf{211} (991), 2011.

\bibitem{dls1}
C. De Lellis and E. Spadaro,
\newblock {\em Regularity of area minimizing currents I: gradient $L^{p}$ estimates},
\newblock in ``Geometric and Functional Analysis", \textbf{24}, 1831--1884, 2014.

\bibitem{dlssns}
C. De Lellis and E. Spadaro,
\newblock {\em Multiple valued functions and integral currents},
\newblock in ``Annali della Scuola Normale Superiore di Pisa - Classe di Scienze", \textbf{14} (5), 1239--1269, 2015.
  
\bibitem{dls2}
C. De Lellis and E. Spadaro,
\newblock {\em Regularity of area minimizing currents II: center manifold},
\newblock in ``Annals of Mathematics", \textbf{183} (2), 499--575, 2016.
 
  \bibitem{dls3}
C. De Lellis and E. Spadaro,
\newblock {\em Regularity of area minimizing currents III: blow-up},
\newblock in ``Annals of Mathematics", \textbf{183} (2), 577--617, 2016.


\bibitem{DubrovinFomenkoNovikovIII}
B.A. Dubrovin, A.T. Fomenko, S.P. Novikov,
\newblock {\em Modern Geometry} - {\em Methods and Applications III}, Graduate Texts in Mathematics, \textbf{124},
\newblock Springer, Berlin-Heidelberg 1990.



\bibitem{Federer59}
H. Federer,
\newblock {\em An approximation theorem concerning currents of finite mass}, proceedings of Sixty-fourth Summer Meeting and Thirty-eight Colloquium (Salt Lake City, Utah September 1-4 1959)
\newblock in ``Notices of the American Mathematical Society", \textbf{6} (4), 326, American Mathematical Society, Providence 1959.

\bibitem{Federerbook}
H. Federer,
\newblock {\em Geometric measure theory}, Classics in Mathematics,
\newblock Springer, Berlin-Heidelberg 1996.

\bibitem{FedererFleming60}
H. Federer and W. Fleming,
\newblock {\em Normal and Integral Currents},
\newblock in ``Annals of Mathematics", \textbf{72} (3), 458--520, 1960.


\bibitem{GoreskyMacPherson}
M. Goresky , R. MacPherson,
\newblock {\em Stratified Morse Theory}, Ergebnisse der Mathematik und ihrer Grenzgebiete, \textbf{14},
\newblock Springer, Berlin-Heidelberg 1988.


\bibitem{Jacob}
J. Lurie,
\newblock {\em Topics in Geometric Topology (18.937)},
\newblock Lecture notes available at https://www.math.ias.edu/~lurie/937.html

\bibitem{Lavrentiev27}
M. Lavrentiev,
\newblock {\em Sur quelques problèmes du calcul des variations},
\newblock in ``Annali di Matematica Pura ed Applicata", \textbf{4} (1), 7--28, 1927.

\bibitem{MilnorStasheff}
J. Milnor and J. Stasheff,
\newblock {\em Characteristic Classes}, Annals of Mathematics Studies, \textbf{76},
\newblock Princeton University Press, Princeton 1974.


\bibitem{MunkresElements}
J. R. Munkres,
\newblock {\em Elements of Algebraic Topology},
\newblock Addison-Wesley, Menlo Park 1984.




\bibitem{Serre53}
J. P. Serre,
\newblock {\em Cohomologie modulo 2 des complexes d’Eilenberg-MacLane},
\newblock in ``Commentarii Mathematici Helvetici", \textbf{27}, 198--232, 1953.


\bibitem{Simonbook}
L. Simon,
\newblock {\em Lectures on geometric measure theory}, Proceedings of the Centre for Mathematical Analysis, \textbf{3},
\newblock Australian National University, Canberra 1984.

\bibitem{Spanier}
E. H. Spanier,
\newblock {\em Algebraic Topology},
\newblock McGraw-Hill, New York 1966.


\bibitem{SteenrodEpstein}
N. E. Steenrod and D. B. A. Epstein,
\newblock {\em Cohomology operations}, Annals of Mathematics Studies, \textbf{50},
\newblock Princeton University Press, Princeton 1962.


\bibitem{SullivanLiverpool}
D. Sullivan,
\newblock {\em Singularities in spaces}, proceedings of Liverpool Singularities Symposium II (Liverpool, UK September 1969 - August 1970)
\newblock in ``Lecture notes in mathematics", \textbf{209}, 196--206, Springer, Berlin-Heidelberg 1971.



\bibitem{Switzer}
R. M. Switzer,
\newblock {\em Algebraic Topology - Homology and Homotopy}, Classics in Mathematics,
\newblock Springer, Berlin-Heidelberg 2002.



\bibitem{MosherTangora}
R. E. Mosher and M. C. Tangora,
\newblock {\em Cohomology operations and applications in homotopy theory}, Harper's Series in Modern Mathematics,
\newblock Harper \& Row, New York 1968.

\bibitem{Thomdottorato}
R. Thom,
\newblock {\em Espaces fibrés en sphères et carrés de Steenrod},
\newblock in ``Annales scientifiques de l'École normale supérieure", \textbf{69}, 109--182, 1952.


\bibitem{Thom54}
R. Thom,
\newblock {\em Quelques propriétés globales des variétés
différentiables},
\newblock in ``Commentarii Mathematici Helvetici", \textbf{28}, 17--86, 1954.

\bibitem{Thom69}
R. Thom,
\newblock {\em Ensembles et morphismes stratifiés},
\newblock in ``Bulletin of the American Mathematical Society", \textbf{75}, 240--284, 1969.


\bibitem{Wall}
C. T. C. Wall,
\newblock {\em Differential Topology}, Cambridge studies in advanced mathematics, \textbf{156},
\newblock Cambridge University Press, Cambridge 2016.


\bibitem{Whitehead}
J. H. C. Whitehead,
\newblock {\em On $C^1$-complexes},
\newblock in ``Annals of Mathematics", \textbf{41} (4), 809--824, 1940.




\end{thebibliography}
\end{document}